\newtheorem{thm}{Theorem}[section]
\newtheorem*{thm*}{Theorem}
\newtheorem{cor}[thm]{Corollary}
\newtheorem{lem}[thm]{Lemma}
\newtheorem{prop}[thm]{Proposition}
\theoremstyle{remark}
\newtheorem{rem}[thm]{Remark}
\newtheorem{cond}[thm]{Condition}
\DeclareMathOperator{\spa}{span}
\DeclareMathOperator{\diam}{diam}
\newcommand{\R}{\mathbb{R}}
\newcommand{\Z}{\mathbb{Z}}
\newcommand{\La}{\mathbb{L}}
\newcommand{\Q}{\mathbb{Q}}
\newcommand{\eps}{\varepsilon}
\DeclareMathOperator{\conv}{conv}
\DeclareMathOperator{\indre}{int}
\DeclareMathOperator{\tr}{Tr}
\newcommand{\Ha}{\mathcal{H}}
\newcommand{\II}{\textit{II}}
\DeclareRobustCommand\altoverline[2][0.8]{%
\mathmakebox[\widthof{#2}][c]{\overline{\mathmakebox[\widthof{#2}*\real{#1}][c]{#2}}}}
\title{Local digital algorithms applied to Boolean models}
\author{Julia H\"{o}rrmann, Anne Marie Svane\\ \\ Dedicated to Eva B. Vedel Jensen on the occasion\\ of her 65th birthday }
\date{}
\begin{document}
\maketitle
\begin{abstract}
We investigate the estimation of specific intrinsic volumes of stationary Boolean models by local digital algorithms; that is, by weighted sums of $n\times\ldots \times n$ configuration counts. We show that asymptotically unbiased estimators for the specific surface area or integrated mean curvature do not exist if the dimension is at least two or three, respectively. For $3$-dimensional stationary, isotropic Boolean models, we derive asymptotically unbiased estimators for the specific surface area and integrated mean curvature. 
For a Boolean model with balls as grains we even obtain an asymptotically unbiased estimator for the specific Euler characteristic. This solves an open problem from \cite{nagel}.
%Delete last sentence in final version
\end{abstract}

\noindent
{\bf AMS subject classification (2010).} 60D05; % Geometric probability and stochastic geometry, 
28A75; %(Length, area, volume, other geometric measure theory),
 68U10; %(Image processing), 
62H35. %(Image analysis)
\\
\\
{\bf Keywords.} Digital image; local algorithm; Boolean model; spe\-ci\-fic in\-trin\-sic volume; Miles formulas.

\section{Introduction}

Let $Z\subseteq \R^d$ be a geometric object. We model a black-and-white digital image of $Z$ as the set $Z\cap \La$ where $\La$ is some observation lattice. The set $Z\cap \La$ can be thought of as the set of foreground or black pixels (voxels), while $\La \backslash Z$ corresponds to the background or white pixels (voxels). This is illustrated in Figure \ref{dig1}.

Given this information, we want to derive geometric information about $Z$. Of particular interest are the intrinsic volumes of $Z$, including such natural quantities as volume, surface area, integrated mean curvature, and Euler characteristic.  A variety of algorithms for their estimation has been suggested in the literature, see e.g.\ \cite{spodarev,digital,lindblad,OM,nagel}. Many of these algorithms are of local type, depending only on the local configurations of black and white points occurring in the image. Such local algorithms are often chosen in practice because they are  intuitive and simple to write down explicitly. Moreover, their computation time is only linear in the number of voxels, see \cite{OM} for more information on implementations.

Local algorithms have been studied theoretically in the design based setting where $Z$ is a deterministic set and $\La$ is stationary random. With the exception of volume and Euler characteristic, results show that they are almost always biased \cite{kampf2,am3,johanna}, even asymptotically when the resolution goes to infinity. 

In this paper we study local algorithms when applied to a random set, more precisely a Boolean model.
Boolean models are the basic models from stochastic geometry for the description of porous structures, e.g.\ in physics, material science, or biology. There exist several monographs that treat Boolean models; \cite{SW} contains the mathematical theory from stochastic geometry, \cite{chiu} also presents many applications, and \cite{molchanov} puts an emphasizes on available statistical methods. 
  
We compare the mean estimates of the specific intrinsic volumes of a Boolean model to the true value. Results from stochastic geometry allow us a more explicit quantification of the bias than in the deterministic case. The idea was already outlined in \cite{nagel} when $Z$ is a stationary isotropic Boolean model, and the authors use it to compute the asymptotic bias of a specific 3D algorithm as the grid width goes to zero. In \cite{am}, the approach of \cite{nagel} is used in 2D not only to compare known algorithms but also to derive general formulas for the bias in high resolution and to give an optimal algorithm. We are going to generalise this approach to 3D.

We start by considering a stationary, but not necessarily isotropic, Boolean model and derive formulas for the mean digital estimators  up to second order in the grid width.
The foundation for this is an asymptotic formula as the grid width $a$ goes to zero for the hit-and-miss probabilities
\begin{equation*}
P(aB\subseteq Z,aW \subseteq \R^d \backslash Z),
\end{equation*}
where  $Z$ is a stationary Boolean model in $\R^d$ with compact convex grains and $B,W\subseteq \R^d$ are finite sets. 

The resulting formulas for the mean digital estimators generalise the formulas of \cite{nagel} and \cite{am} to non-isotropic grain distributions. They have a resemblance to the Miles formulas \cite{miles} for specific intrinsic volumes, but contain a rotation bias. 
The first order asymptotics are similar to the corresponding result in the design based setting with the difference that the deterministic set is replaced by the Blaschke body associated with $Z$. In contrast to this, a new term shows up in the second order formulas due to the underlying randomness. The formulas lead to the first main result. 
\begin{thm*}
Let $Z\subseteq\R^d$ be a stationary Boolean model satisfying Condition~\ref{cond2}. Then, there exists no asymptotically unbiased estimator for the specific surface area or integrated mean curvature based on $n\times\ldots\times n$-configuration counts if the dimension is at least two or three, respectively.
\end{thm*}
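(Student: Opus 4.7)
The plan is to exploit the asymptotic expansion for the mean of a local digital estimator that has just been announced in the excerpt. Any local algorithm has the form
\[
\hat V_a = \sum_{(B,W)} \alpha_{B,W}\, \mathbf{1}\{aB\subseteq Z,\ aW\subseteq \R^d\setminus Z\},
\]
where the sum runs over the configurations $(B,W)$ in the $n\times\dots\times n$ window. Taking expectations and substituting the asymptotic expansion of the hit-and-miss probabilities $P(aB\subseteq Z,\ aW\subseteq \R^d\setminus Z)$ yields an expansion of $\mathbb{E}\hat V_a(Z)$ in powers of $a$. After dividing by the appropriate power of $a$, the leading non-trivial coefficient is a linear functional of the surface area measure of the Blaschke body $B(Z)$ of $Z$; it has the form $\int_{S^{d-1}} w(u)\, S_{d-1}(B(Z),du)$ for a weight function $w\colon S^{d-1}\to\R$ determined solely by the algorithm weights and the neighbourhood shape.

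For asymptotic unbiasedness for specific surface area, this leading functional must equal, for every admissible grain distribution, the Miles expression for the specific surface area of $Z$, which is a fixed multiple of the total surface area of $B(Z)$. Equivalently, one needs
\[
\int_{S^{d-1}} \bigl(w(u)-c\bigr)\, S_{d-1}(B(Z),du) = 0
\]
identically in the Blaschke body, for a specific constant $c$. Since these Blaschke bodies exhaust a rich subclass of centrally symmetric convex bodies as the grain distribution varies, the identity forces $w\equiv c$ on $S^{d-1}$.

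The main obstacle is to show that this condition $w\equiv c$ cannot in fact be realised by any choice of algorithm weights. The function $w$ is built from finitely many lattice-adapted integrals and therefore inherits only the finite symmetry group of the cubical neighbourhood; in particular, evaluating $w$ along a coordinate axis and along a lattice diagonal must in general produce two genuinely different values, so $w$ cannot equal a rotation-invariant constant on all of $S^{d-1}$. Choosing a grain distribution whose Blaschke body concentrates its surface area measure near a bad direction then produces a strictly positive asymptotic bias, which is the mechanism that imports the design-based impossibility results referenced in the introduction into the Boolean setting.

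The case of integrated mean curvature in dimension $d\geq 3$ proceeds in parallel but using the second-order coefficient of the $a$-expansion. Here one has to check that the additional randomness-induced term described in the introduction does not supply extra flexibility capable of cancelling the leading rotation bias. Once this is verified, the same lattice-symmetry argument shows that the candidate weight function cannot equal the constant dictated by the Miles formula for specific integrated mean curvature, ruling out asymptotic unbiasedness. The hard part throughout is the explicit verification that the algorithm-induced weight $w$ fails to be constant, since this turns a soft structural obstruction into a concrete inequality.
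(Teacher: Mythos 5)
Your strategy coincides with the paper's: expand $E\hat V_q(Z\cap A)$ in powers of $a$, recognise the leading coefficient as $\int_{S^{d-1}} w(u)\,S_{d-1}(B(Z),du)$ with $w$ determined by the weights, and reduce asymptotic unbiasedness over all admissible grain distributions to the design-based impossibility results. The problem is that your write-up stops exactly where the proof has to begin: you explicitly label the non-constancy of $w$ as ``the main obstacle'' and ``the hard part'', and you support it only with the assertion that the axis and diagonal values of $w$ ``must in general'' differ. That is not an argument --- a function invariant only under the finite lattice symmetry group can perfectly well be constant, and ruling this out for every choice of weights is precisely the content of the design-based impossibility theorems. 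The paper closes this gap by specialising to a Boolean model with a \emph{deterministic} grain $K_0$, observing via Theorem~\ref{mean1st} that $\lim_{a\to 0}E\hat V_{d-1}(Z\cap A)=\gamma e^{-\gamma V_d(K_0)}\lim_{a\to 0}E\hat V_{d-1}(K_0)$ with the design-based estimator on the right, and then importing the explicit \emph{convex} counterexamples of \cite[Theorem~1.4]{am3} (originally \cite{rataj}), for which no weights achieve $\lim_{a\to 0}E\hat V_{d-1}(K_0)=V_{d-1}(K_0)$. A secondary inaccuracy: the identity $\int_{S^{d-1}}(w(u)-c)\,S_{d-1}(B(Z),du)=0$ over all Blaschke bodies forces $w-c$ only to be a linear function $\langle v,\cdot\rangle$ (linear functions integrate to zero against every surface area measure), not $w\equiv c$; this is harmless for the impossibility direction but your stated reduction is not quite right.

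For the integrated mean curvature you correctly flag that the randomness-induced term might cancel the rotation bias, but you do not verify that it cannot. The paper's resolution is short and should be made explicit: by Theorem~\ref{mean2nd} and \cite[Theorem~4.2]{am3} (from \cite{am2}), after removing the common factor $e^{-\gamma V_d(K_0)}$, both $\lim_{a\to 0}E\hat V_{d-2}(Z\cap A)$ and $\altoverline{V}_{d-2}(Z)$ in \eqref{specific} are polynomials in the intensity $\gamma$, with the coefficient of $\gamma$ equal to the design-based limit $\lim_{a\to 0}E\hat V_{d-2}(K_0)$ and to $V_{d-2}(K_0)$ respectively, while the new term sits entirely in the $\gamma^2$ coefficient. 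Unbiasedness for all $\gamma$ therefore forces the $\gamma$-coefficients to agree, which again reduces to the design-based statement and is defeated by the same convex counterexample $K_0$. Without this coefficient comparison (or some substitute), your argument for $V_{d-2}$ remains incomplete.
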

Next, we concentrate on Boolean models that are also isotropic. Specializing to three dimensions and $2\times 2 \times 2$-configuration counts, we obtain the following result. 
\begin{thm*}
Let $Z\subseteq\R^3$ be a stationary, isotropic Boolean model satisfying Condition~\ref{cond2}. Then, there exist asymptotically unbiased estimators for the specific surface area and integrated mean curvature based on $2\times 2\times 2$-configuration counts. Possible weights are given in Table \ref{optimal}.
\end{thm*}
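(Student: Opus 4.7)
The plan is to specialize the first- and second-order asymptotic expansion of the mean of a local digital estimator, derived earlier in the paper for arbitrary stationary Boolean models, to the isotropic case in dimension three and $2\times 2\times 2$-configuration counts, and then to solve the resulting finite linear system explicitly.

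First I would take the expansion of the hit-and-miss probabilities $P(aB\subseteq Z,\,aW\subseteq \R^3\setminus Z)$ with $B\cup W\subseteq \{0,1\}^3$ up to second order in the grid width $a$, and average it against a general weight function. Under isotropy of $Z$, the rotation-bias terms that obstruct the construction in the stationary-only setting collapse, so the expansion coefficients become linear combinations of the four specific intrinsic volumes of $Z$ with multipliers depending only on the weights and on elementary functionals of the eight vertices of the unit cube. The cubic symmetry group of the $2\times 2\times 2$ voxel block partitions the $256$ binary configurations into $22$ orbits; since an isotropic $Z$ distinguishes two weight assignments only through their orbit sums, the search reduces without loss of generality to $22$ real parameters, one per orbit.

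Then for the specific surface area I would write out the linear conditions on these $22$ parameters imposed by asymptotic unbiasedness: at leading order in $a$, the coefficient of the specific surface area must equal $1$ and those of the other three specific intrinsic volumes must vanish; and the next-to-leading order correction, which is again a linear combination of the four specific intrinsic volumes with coefficients depending on the grain through rotation-averaged support-function moments, must vanish identically. The analogous system with the integrated mean curvature as target gives the corresponding estimator. A rank count would show that the combined number of constraints is strictly less than $22$, so solutions exist; I would exhibit explicit ones and list them in Table~\ref{optimal}.

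The crux, and the step I expect to be the main obstacle, is the second-order correction. In the anisotropic setting it produces obstructions that cannot all be canceled by any choice of weights, which is the content of the preceding negative theorem; under isotropy the rotation average collapses those obstructions onto a lower-dimensional subspace that actually lies in the image of the weight-to-coefficient map. Verifying this containment by a careful analysis of the constraint matrix restricted to orbit-constant weights is the nontrivial part of the argument, and it is what forces the algorithm to use the full $2\times 2\times 2$ neighbourhood rather than a smaller configuration. Once the containment is established, the explicit weights in Table~\ref{optimal} complete the proof.
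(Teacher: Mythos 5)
Your overall architecture matches the paper's: specialize the general first-- and second--order expansions (Theorems \ref{mean1st} and \ref{mean2nd}) to the isotropic 3D case, observe that the coefficients reduce to grain-independent constants times mean intrinsic volumes of the typical grain (Theorem \ref{mean1stIsotropic}), exploit the cubic symmetry to pass to the $22$ motion classes $\eta_j$ with orbit-constant weights, and then solve a small linear system obtained by comparison with the Miles formulas. However, there is a genuine gap at the decisive step. Existence of asymptotically unbiased weights is the statement that the inhomogeneous linear system
\[
w^{(2)}DQ_3=-2,\qquad\text{resp.}\qquad w^{(1)}DQ_3=0,\ \ w^{(1)}DQ_4=-2,\ \ w^{(1)}DQ_5=-\pi
\]
(together with $w_1=w_{22}=0$) is \emph{consistent}, and "the combined number of constraints is strictly less than $22$" does not imply this: an underdetermined system $Ax=b$ is solvable only if $b$ lies in the image of $A$, which here requires knowing that the vectors $DQ_3$, $DQ_4$, $DQ_5$ (restricted to the admissible coordinates) are linearly independent, hence requires their actual numerical entries. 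Your proposal never explains how to evaluate the orbit constants $c_1,c_2,c_3$ (equivalently the columns $Q_3,Q_4,Q_5$); the constant $c_2$ in particular involves the curvature functional $EQ(K,B_l,W_l)$, not merely "rotation-averaged support-function moments." The paper's resolution is precisely this computation: since the $c_k$ are independent of the grain distribution, it specializes to ball grains, where the third-order parallel-volume expansion of Kampf and Kiderlen \cite{markus} via intrinsic power volumes yields the explicit matrix $Q$ (Tables \ref{Ver} and \ref{Qer}), after which the weights of Table \ref{optimal} are verified to satisfy $w^{(q)}DQ=b_q$. Without this (or the design-based computations of \cite{am2} for $c_1,c_3$), the existence claim is not established.

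A secondary, less serious point: you require for the surface-area estimator that the next-to-leading-order correction "vanish identically." Asymptotic unbiasedness of $\hat V_2$ only constrains the limit as $a\to 0$, i.e.\ the single condition $w^{(2)}DQ_3=-2$ beyond $w_1^{(2)}=w_{22}^{(2)}=0$; the vanishing of the order-$a$ term is an additional optimality property that the weights of Table \ref{optimal} happen to enjoy, not a requirement of the theorem. For $\hat V_1$ the situation is the one you describe (the lower-order coefficients must vanish for the limit to exist at all, which is condition \eqref{cond3}). Also, your closing remark that the second-order analysis "forces" the full $2\times2\times2$ neighbourhood is not argued in the paper and does not follow from what you have written.
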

In the case of a 3D Boolean model where the grains are balls of a random radius which is almost surely bounded from below, results of \cite{markus} allow us a more detailed analysis. Thus, we can derive third order formulas for the mean estimators. Thereby, we can describe the asymptotic mean values for the full set of estimators in 3D. In particular, we obtain an asymptotically unbiased estimator for the specific Euler characteristic, which solves an open problem from \cite{nagel}.
\begin{thm*}
Let $Z\subseteq\R^3$ be a Boolean model with balls as grains. Then, there exists an asymptotically unbiased estimator for the specific Euler characteristic based on $2\times 2\times 2$-configuration counts. Possible weights are given in Table~\ref{optimal}.
\end{thm*}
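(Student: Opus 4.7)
The plan is to exploit the ball-specific third-order asymptotic expansion of the hit-and-miss probabilities promised in the paragraph above the theorem. For any local digital algorithm based on $2\times2\times2$-configuration counts, the mean estimator per unit reference volume is a linear combination
\[
\frac{1}{a^3}\sum_{(B,W)} w_{B,W}\, P(aB\subseteq Z,\, aW\subseteq \R^3\setminus Z),
\]
taken over configurations $(B,W)$ on the unit cube vertices. Inserting the third-order expansion of the hit-and-miss probabilities valid for Boolean models with ball grains whose radius is bounded from below, this mean takes the form
\[
a^{-3}\bigl(A_0(w) + A_1(w)\,a + A_2(w)\,a^2 + A_3(w)\,a^3\bigr) + o(1),
\]
where each coefficient $A_i(w)$ is an explicit linear functional of the weights, with linear dependence on the specific intrinsic volumes $\bar V_3,\bar V_2,\bar V_1,\bar V_0$ of $Z$ and, in the case of $A_2$ and $A_3$, on further curvature-integral functionals of the Blaschke body (which here reduces to a ball).

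Asymptotic unbiasedness for the specific Euler characteristic is then the requirement that $A_0(w)=A_1(w)=A_2(w)=0$ for every admissible Boolean model of the class considered, while $A_3(w)$ equals $\bar V_0$ universally. I would read off the coefficients of $\bar V_3,\bar V_2,\bar V_1$ in $A_0,A_1,A_2,A_3$ from the preceding mean-value formulas and set up the corresponding finite linear system in the weights $w_{B,W}$. Using the motion invariance and black/white symmetry of the problem, the $2^8=256$ raw weights collapse to the small set of equivalence classes of $2\times2\times2$ configurations, so the system is of manageable size. An explicit solution can then be exhibited in the form listed in Table~\ref{optimal}.

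The main obstacle, and the reason the problem was open in \cite{nagel}, is that in the general stationary isotropic setting the coefficients $A_1,A_2$ couple the weights to geometric functionals of the grain distribution that are not controlled by $\bar V_3,\bar V_2,\bar V_1$ alone, and the resulting system is overdetermined. The decisive step is therefore to show that in the ball case the additional functionals appearing in the third-order expansion degenerate: they either vanish by isotropy of the ball or can be expressed through $\bar V_1$ and $\bar V_2$ via the Miles formulas. Once this collapse is established, the system becomes consistent and solvability reduces to exhibiting one explicit weight vector, which one then records in Table~\ref{optimal} and verifies by direct substitution.
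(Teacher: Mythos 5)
Your overall skeleton --- expand the mean estimator to third order in $a$, equate coefficients with the Miles formulas, reduce the $2^{8}$ raw weights to motion-equivalence classes, and exhibit one explicit solution --- is the same as the paper's (Section \ref{Balls}, culminating in the condition \eqref{ligning}, i.e.\ $w^{(0)}D\,Q=b_0$ with $b_0=(0,0,0,0,0,-1,-2,-\pi)$, solved by the weights of Table \ref{optimal}). However, you treat the third-order expansion of the hit-and-miss probabilities as an available black box, and where you do try to explain \emph{why} the ball case unlocks the problem, the mechanism you give is not the right one. The obstruction in the general (even isotropic) case is not an overdetermined linear system coupling the weights to uncontrolled functionals of the grain distribution; it is that the convex-hull approximation $EV_d(K\oplus aC)\approx EV_d(K\oplus a\conv(C))$ underlying Proposition \ref{pol} carries an error of order exactly $a^{3}$ (Lemma \ref{convbound}), which is precisely the order at which the specific Euler characteristic lives, so for general grains the third-order coefficient of $P(aB\subseteq Z,\,aW\subseteq\R^{3}\setminus Z)$ cannot even be written down.

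The decisive step for balls is therefore not that the extra functionals ``vanish by isotropy or reduce to $\altoverline{V}_1,\altoverline{V}_2$'': the new third-order object is the intrinsic power volume $V_1^{(3)}(F)$ of the finite configuration $F$, a deterministic, configuration-dependent quantity that is manifestly nonzero (see Table \ref{Ver}) and survives into the matrix $Q$ through the column $V_3(\conv S)-\pi V_1^{(3)}(S)$. What makes the ball case work is the Kampf--Kiderlen identity \eqref{intrinsicPowerVol}, which, for $K=rB^{3}$ with $r$ bounded below, gives the \emph{exact} correction $a^{3}\pi V_1^{(3)}(F)+O(a^{4})$ to the convex-hull approximation; only then do the third-order coefficients become explicitly computable linear combinations of moments $Er^{k}$ that can be matched against the Miles formula for $\altoverline{V}_0(Z)$, reducing existence to the solvability of the finite system $w^{(0)}D\,Q=b_0$, which Table \ref{optimal} witnesses. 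Without identifying this ingredient and computing the resulting matrix $Q$, your argument does not establish that the linear system you propose is consistent, which is the actual content of the theorem.
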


 Applying our results to the algorithms suggested in \cite{nagel}, we can even show that they all have a bias already in the second order terms. Instead, our algorithms based on the weights in Table \ref{optimal} are optimal up to third order.

%More generally, local algorithms have also  been suggested for the estimation of Minkowski tensors \cite{turk,mecke}. These will have to take the position into account. We consider extensions of our results to this case, but good estimators seem to be hard to obtain.

The paper is structured as follows: We start by collecting some background material in Section \ref{Prelim}. Then we compute asymptotic formulas for the hit-and-miss probabilities in Section \ref{hitmiss}. In Section \ref{dig}, local algorithms are defined formally. Then the results of Section \ref{hitmiss} are used to draw conclusions about the estimators. In Section \ref{isotropy} we specialise to the case of isotropic Boolean models. The special case of Boolean models with balls as grains is studied more deeply in Section \ref{Balls}, leading to an optimal algorithm given in Subsection \ref{algor}. %Finally, in Section \ref{outlook} we investigate the possibility to define local algorithms for the estimation of Minkowski tensors.

\begin{figure}[h]
\begin{equation*}
%\begin{center}
\begin{tikzpicture}[scale=1.5]
%\draw [black, fill=black] (0.9,0.1) -- (2.3,0.8) -- (1.5,1.8) -- (1,1.2) -- (-0.1,1.6) -- (0.9,0.1);
%\node at (-1,-0.3) {\textbf{a.}};
\draw[fill] (1.3,0.3) circle [radius=0.4];
\draw[fill] (-0.2,0.4) circle [radius=0.4];
\draw[fill] (0.5,1.15) circle [radius=0.4];
\draw[fill] (-0.3,1.4) circle [radius=0.4];
\draw[fill] (1.4,1.2) circle [radius=0.4];
\draw[fill] (0.1,1.7) circle [radius=0.4];
\draw[fill] (2,1.65) circle [radius=0.4];
\draw[fill] (0.7,0.5) circle [radius=0.4];
\draw[fill] (2.2,0.4) circle [radius=0.4];

\draw[fill] (-0.5,0) circle [radius=0.025];
\draw[fill] (-0.5,0.5) circle [radius=0.025];
\draw[fill] (-0.5,1) circle [radius=0.025];
\draw[fill] (-0.5,1.5) circle [radius=0.025];
\draw[fill] (-0.5,2) circle [radius=0.025];
\draw[fill] (0,0) circle [radius=0.025];
\draw[fill] (0,0.5) circle [radius=0.025];
\draw[fill] (0,1) circle [radius=0.025];
\draw[fill] (0,1.5) circle [radius=0.025];
\draw[fill] (0,2) circle [radius=0.025];
\draw[fill] (0.5,0) circle [radius=0.025];
\draw[fill] (0.5,0.5) circle [radius=0.025];
\draw[fill] (0.5,1) circle [radius=0.025];
\draw[fill] (0.5,1.5) circle [radius=0.025];
\draw[fill] (0.5,2) circle [radius=0.025];
\draw[fill] (1,0) circle [radius=0.025];
\draw[fill] (1,0.5) circle [radius=0.025];
\draw[fill] (1,1) circle [radius=0.025];
\draw[fill] (1,1.5) circle [radius=0.025];
\draw[fill] (1,2) circle [radius=0.025];
\draw[fill] (1.5,0) circle [radius=0.025];
\draw[fill] (1.5,0.5) circle [radius=0.025];
\draw[fill] (1.5,1) circle [radius=0.025];
\draw[fill] (1.5,1.5) circle [radius=0.025];
\draw[fill] (1.5,2) circle [radius=0.025];
\draw[fill] (2,0) circle [radius=0.025];
\draw[fill] (2,0.5) circle [radius=0.025];
\draw[fill] (2,1) circle [radius=0.025];
\draw[fill] (2,1.5) circle [radius=0.025];
\draw[fill] (2,2) circle [radius=0.025];
\draw[fill] (2.5,0) circle [radius=0.025];
\draw[fill] (2.5,0.5) circle [radius=0.025];
\draw[fill] (2.5,1) circle [radius=0.025];
\draw[fill] (2.5,1.5) circle [radius=0.025];
\draw[fill] (2.5,2) circle [radius=0.025];

%\node at (3.5,-0.3) {\textbf{b.}};
\draw[black] (4,0) circle [radius=0.07];
\draw[fill] (4,0.5) circle [radius=0.07];
\draw[black] (4,1) circle [radius=0.07];
\draw[fill] (4,1.5) circle [radius=0.07];
\draw[black] (4,2) circle [radius=0.07];
\draw[black] (4.5,0) circle [radius=0.07];
\draw[fill] (4.5,0.5) circle [radius=0.07];
\draw[black] (4.5,1) circle [radius=0.07];
\draw[fill] (4.5,1.5) circle [radius=0.07];
\draw[fill] (4.5,2) circle [radius=0.07];
\draw[black] (5,0) circle [radius=0.07];
\draw[fill] (5,0.5) circle [radius=0.07];
\draw[fill] (5,1) circle [radius=0.07];
\draw[fill] (5,1.5) circle [radius=0.07];
\draw[black] (5,2) circle [radius=0.07];
\draw[black] (5.5,0) circle [radius=0.07];
\draw[fill] (5.5,0.5) circle [radius=0.07];
\draw[black] (5.5,1) circle [radius=0.07];
\draw[black] (5.5,1.5) circle [radius=0.07];
\draw[black] (5.5,2) circle [radius=0.07];
\draw[fill] (6,0) circle [radius=0.07];
\draw[fill] (6,0.5) circle [radius=0.07];
\draw[fill] (6,1) circle [radius=0.07];
\draw[fill] (6,1.5) circle [radius=0.07];
\draw[black] (6,2) circle [radius=0.07];
\draw[black] (6.5,0) circle [radius=0.07];
\draw[black] (6.5,1) circle [radius=0.07];
\draw[fill] (6.5,0.5) circle [radius=0.07];
\draw[fill] (6.5,1.5) circle [radius=0.07];
\draw[fill] (6.5,2) circle [radius=0.07];
\draw[black] (7,0) circle [radius=0.07];
\draw[fill] (7,0.5) circle [radius=0.07];
\draw[black] (7,1) circle [radius=0.07];
\draw[black] (7,1.5) circle [radius=0.07];
\draw[black] (7,2) circle [radius=0.07];
\end{tikzpicture}
%\end{center}
\end{equation*}
\caption{Digital image of a Boolean model with balls as grains. }
\label{dig1}
\end{figure}
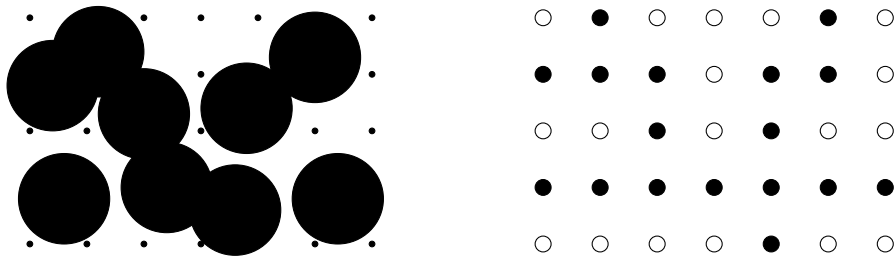

\section{Preliminaries}\label{Prelim}
By a Boolean model $Z$ we shall always mean a stationary Boolean model in $\R^d$ with compact convex grains of common distribution $\Q$ and intensity $\gamma>0$. That is, 
\begin{equation*}
Z = \bigcup_{i=1}^\infty (\xi_i + K_i)
\end{equation*}
where  $\{\xi_1,\xi_2,\dots \}$ is a stationary Poisson process in $\R^d$ with intensity $\gamma$ and $K_1, K_2, \dots$ is a sequence of i.i.d. random compact convex subsets (convex bodies) of $\R^d$ with distribution $\Q$ independent of $\{\xi_1,\xi_2,\dots \}$ and satisfying the integrability condition
\begin{equation}\label{intcond}
\int V_d(K \oplus B^d) \Q(dK) < \infty.
\end{equation}
Here $B^d$ is the unit ball in $\R^d$, $\oplus $ is the Minkowski addition and $V_d$ is the $d$-dimensional volume. 

The intrinsic volumes are important functionals of convex geometry, see \cite{schneider}.
They are the unique functionals $V_q$, $q=0,\ldots,d$ on the space of convex bodies which fulfill the Steiner formula
\begin{equation}\label{steiner}
V_d(K\oplus \epsilon B^d)=\sum\limits_{q=0}^d \epsilon^{d-q}\kappa_{d-q}V_q(K),
\end{equation}
where $K$ is a convex body, $\epsilon>0$ and $\kappa_q:= \frac{\pi^{q/2}}{\Gamma(1+\frac{q}{2})}$ is the volume of $B^q$.
In particular $V_d$ is the volume, $2V_{d-1}$ is the $(d-1)$-dimensional surface area, $2\pi(d-1)^{-1}V_{d-2}$ is the integrated mean curvature and $V_0$ is the Euler characteristic.

In stochastic geometry it has proven useful to consider spatial and probabilistic averages, so called specific intrinsic volumes, see \cite{SW}. 
The specific intrinsic volumes of $Z$ are defined by
\begin{equation}\label{specdef}
\altoverline{V}_{q}(Z)= \lim_{r\to \infty} \frac{EV_q(Z\cap rA)}{V_{d}(rA)}
\end{equation}
where $V_q$, $q=0,\dots,d$ and $A$ is a compact convex window with non-empty interior.
An alternative description is 
\begin{equation}\label{specdef2}
\altoverline{V}_{q}(Z)=EV_q(Z\cap [0,1]^d) - EV_q(Z\cap \partial^+ [0,1]^d)
\end{equation}
where $\partial^+ [0,1]^d= [0,1]^d \backslash [0,1)^d$. The effect of both, the limit in \eqref{specdef} and the subtraction in \eqref{specdef2}, is that the contributions to $V_q$ coming from the boundary of the window are removed. 

We shall mainly be interested in the cases $q=d,d-1,d-2$. 
Let $V_{d-1,d-1}$ denote the mixed functional of translative integral geometry, c.f.\ \cite[Sections 5.2 and 6.4]{SW}, which is defined via the translative integral formula
\begin{align*}
&\int\limits_{\R^d}V_{d-2}(K_1\cap (K_2+x))dx\\
&= V_d(K_1)V_{d-2}(K_2)+V_d(K_2)V_{d-1}(K_1)+V_{d-1,d-1}(K_1,K_2)
\end{align*}
for convex bodies $K_1$, $K_2$.
By $K$ we denote the typical grain, i.e. the random convex body with distribution $\Q$. 
Let $K_1$ and $K_2$ be two independent copies of $K$. 
The specific intrinsic volumes in the cases $q=d,d-1,d-2$ can now be expressed in terms of mean intrinsic volumes of $K$ and the mean mixed functional of $K_1$ and $K_2$, namely it holds by \cite[Theorem 9.1.5]{SW}:
\begin{align}\label{specific}
\altoverline{V}_d(Z){}&= 1- e^{-\gamma EV_d(K)}\\ \nonumber
\altoverline{V}_{d-1}(Z){}&= e^{-\gamma EV_d(K)}\gamma EV_{d-1}(K)\\ \nonumber
\altoverline{V}_{d-2}(Z) {}&= e^{-\gamma EV_d(K)}\bigg(\gamma EV_{d-2}(K) - \frac{\gamma^2}{2}EV_{d-1,d-1}(K_1,K_2)\bigg).
\end{align}

 If the grain distribution is isotropic,  then
\begin{equation*}
EV_{d-1,d-1}(K_1,K_2)= \frac{(d-1)\kappa_{d-1}^2}{d\kappa_d\kappa_{d-2}}EV_{d-1}(K)^2
\end{equation*}
 by the principal kinematic formula \cite[Theorem 2.2]{kiderlen}. 

In the special case of a stationary Boolean model with isotropic grain distribution in 3D, the specific intrinsic volumes are given by the Miles formulas \cite{miles} or \cite[Theorem 9.14]{SW}:
\begin{align}\label{Miles}
\altoverline{V}_3(Z){}&= 1- e^{-\gamma EV_3(K)}\\ \nonumber
\altoverline{V}_{2}(Z){}&= e^{-\gamma EV_3(K)}\gamma EV_{2}(K)\\  \nonumber
\altoverline{V}_{1}(Z) {}&= e^{-\gamma EV_3(K)}\bigg(\gamma EV_{1}(K) - \frac{\gamma^2\pi}{8}EV_{2}(K)^2\bigg)\\ \nonumber
\altoverline{V}_{0}(Z) {}&= e^{-\gamma EV_3(K)}\bigg( \gamma EV_0(K)-\frac{\gamma^2}{2}EV_2(K)EV_1(K) + \frac{\gamma^3\pi}{48}EV_2(K)^3 \bigg).
\end{align}

\section{Hit-and-miss probabilities for stationary Boolean models}\label{hitmiss}
In this section we derive the theoretical results for hit-and-miss probabilities which we will need in later sections for the study of digital algorithms applied to Boolean models.

Let $B,W\subseteq \R^d$ be two finite sets that are not both empty. We consider the hit-and-miss probabilities 
\begin{equation*}
P(aB\subseteq Z, aW \subseteq \R^d\backslash Z)
\end{equation*}
when $a>0$ is small.
By the inclusion-exclusion principle,
\begin{align}\nonumber
P({B\subseteq Z  , W \subseteq \R^d \backslash Z })&= P(W \subseteq \R^d \backslash Z )-P\Big(\bigcup_{b\in B} \{\{b\}\cup W \subseteq \R^d \backslash Z \}\Big)\\ \nonumber
&=P( W \subseteq \R^d \backslash Z ) + \sum_{\emptyset \neq S\subseteq B} (-1)^{|S|}P(S\cup W \subseteq \R^d \backslash Z)\\ \label{EI1}
&= \sum_{S\subseteq B} (-1)^{|S|}P(S\cup W \subseteq \R^d \backslash Z).
\end{align}

For a compact set $C\subseteq \R^d$ it is well known, see e.g. \cite[(9.3) and (9.4)]{SW}, that
\begin{equation}\label{exp}
P(aC \subseteq \R^d \backslash Z ) = e^{-\gamma E V_d({K}\oplus a\check{C})}
\end{equation}
where $\check{C} = \{-c\mid c\in C\}$.

To describe $EV_d( {K}\oplus a\check{C})$ as $a\to 0$, we need two integrability conditions, which we formulate here for later reference. To state them, we recall that a compact set $X\subseteq \R^d$ is called $\eps$-regular if for every $x\in \partial X$, there exist two balls $B_i,B_o \subseteq \R^d$ of radius $\eps$ such that $x\in B_i\cap B_o$, $B_i\subseteq X$ and $\indre(B_o) \subseteq \R^d \backslash X$.  

\begin{cond}\label{cond1}
The grain distribution $\Q$ satisfies $E\diam(K)^{d-1} <\infty$ and there is an $\eps>0$ such that the grains contain a.s. a ball of radius $\eps$.
\end{cond}
\begin{cond}\label{cond2}
The grain distribution $\Q$ satisfies $E\diam(K)^{d} <\infty$ and there is an $\eps>0$ such that the grains are a.s. $\eps$-regular.
\end{cond}

\begin{lem}\label{convbound}
Suppose that $C\subseteq \R^d$ is compact and $\Q$ satisfies Condition \ref{cond1}. Then there is an $M_1>0$ which is independent of $a$ such that for $a<1$,
\begin{equation*}
0\leq EV_d(K\oplus a\conv(C)) - EV_d(K\oplus aC) \leq M_1a^2.
\end{equation*}
If $\Q$ satisfies Condition \ref{cond2}, then there is an $M_2>0$ which is independent of $a$ such that for $a<1$,
\begin{equation*}
0\leq EV_d(K\oplus a\conv(C)) - EV_d(K\oplus aC) \leq M_2a^3.
\end{equation*}
\end{lem}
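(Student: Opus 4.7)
The lower bound is immediate from $aC\subseteq a\conv(C)$, which gives $K\oplus aC\subseteq K\oplus a\conv(C)$ for each realization of $K$. For the upper bound, my plan is to derive a pointwise estimate on $V_d(K\oplus a\conv(C))-V_d(K\oplus aC)$ with $K$-dependent constant integrable under the stated condition, and then take $\Q$-expectation.

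First I reduce to finite $C$ by Hausdorff approximation: both $K\oplus aC$ and $K\oplus a\conv(C)$ depend continuously on $C$, as does $V_d$ on uniformly bounded compacts, so a bound with constants depending only on $\diam(C)$ transfers to the limit. For finite $C$, the essential identity is $K\oplus a\conv(C)=\conv(K\oplus aC)$: this holds because $K$ is convex and the two sides share the common support function $h_K(u)+ah_C(u)=h_K(u)+ah_{\conv(C)}(u)$, where $h_K(u):=\sup_{x\in K}\langle x,u\rangle$. Setting $X:=K\oplus aC=\bigcup_{c\in C}(K+ac)$, the task becomes bounding the convex-hull defect $V_d(\conv X\setminus X)$ of the union of translates of $K$. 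After translating so $0\in C$, $K\subseteq X$, and the support-function identity forces $X$ and $\conv X$ to have the same outward extent $ah_C(u)$ in every normal direction $u$ at a boundary point of $K$; in particular, the first-order-in-$a$ volume growth rates coincide, and the bound measures only the tangential/curvature-type defect.

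Under Condition~\ref{cond1}, the gap is concentrated near the non-smooth strata of $\partial K$ (the typically $(d-2)$-dimensional set where the Gauss map is multi-valued). Locally parametrizing $\partial K$ as the graph of a concave height function $f$, the upper envelope $\max_{c\in C}[f(r-ac_T)+a\langle c,u\rangle]$ of the translates matches its $\conv(C)$-counterpart at each smooth point, but as $u$ sweeps across a non-smooth stratum the argmax in $C$ jumps, leaving a chord-filled "V" in $\conv X$ of both width and height $O(a)$. Summing the resulting $O(a^2)$-per-stratum contribution against a shape functional of $K$ bounded by a multiple of $\diam(K)^{d-2}$ yields $M_1a^2$, with integrability of the expectation ensured by $E\diam(K)^{d-1}<\infty$. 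Under Condition~\ref{cond2}, $\eps$-regularity makes $\partial K$ a $C^{1,1}$-hypersurface with rolling balls of radius $\eps$ on both sides, so there are no non-smooth strata and the previous contribution vanishes. The residual defect is then purely due to the second-order curvature of $\partial K$: concavity of $f$ with Hessian norm bounded by $1/\eps$ implies that the upper-envelope defect is of order $(a^2/\eps)\diam(C)^2$ in the normal direction with $O(a)$ tangential extent, yielding $O(a^3)$ per unit $(d-1)$-area on $\partial K$. Integration against $V_{d-1}(K)\leq c_d\diam(K)^{d-1}$, integrable under $E\diam(K)^d<\infty$, gives $M_2a^3$.

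The main technical obstacle is making the upper-envelope defect analysis rigorous and uniform in the random grain: under Condition~\ref{cond1}, one must carefully identify the $(d-2)$-skeleton of a general convex body and sum the chord-filling contributions from its corners and edges, suitably weighted by their angular size; under Condition~\ref{cond2}, one must convert the $C^{1,1}$-regularity of $\partial K$ into a sharp upper-envelope defect estimate that integrates over $\partial K$ at exactly the claimed order in $a$.
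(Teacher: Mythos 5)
Your lower bound and your overall strategy (a pointwise bound for fixed $K$ with a constant controlled by $\diam(K)$ and $\eps$, followed by taking the $\Q$-expectation) agree with the paper. You also correctly identify the key identity $K\oplus a\conv(C)=\conv(K\oplus aC)$ and the qualitative dichotomy: an $O(a^2)$ defect carried by the singular part of $\partial K$ versus an $O(a^3)$ defect of curvature type when $\partial K$ is $C^{1,1}$. The difference is that the paper does not prove the pointwise estimates at all: it quotes them from \cite{kampf} (Lemmas 12 and 17) for convex bodies with twice differentiable support function, and then extends to general $K$ by approximation (Schneider, Theorem 3.3.1, continuity of $L\mapsto V_d(L\oplus aC)$ from \cite{kampf}, and the decomposition $K=K'\oplus\eps B^d$ in the $\eps$-regular case). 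You instead propose to prove these estimates directly, and that is exactly where your argument stops being a proof.

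The gap is that the upper-envelope/convex-hull-defect estimate, which you yourself flag as ``the main technical obstacle,'' \emph{is} the content of the lemma; everything else is routine. Concretely: (i) the reduction to finite $C$ requires a continuity argument for $C\mapsto V_d(K\oplus aC)$ in the Hausdorff metric with bounds uniform in the approximating net, which you assert but do not supply; (ii) under Condition~\ref{cond1} the singular set of a general convex body need not be a tidy stratified $(d-2)$-skeleton of ``corners and edges'' --- it can be dense in $\partial K$ --- so ``summing per-stratum contributions'' is not yet a well-defined procedure, and your proposed shape functional of order $\diam(K)^{d-2}$ does not match the moment $E\diam(K)^{d-1}<\infty$ that Condition~\ref{cond1} actually provides (the constant in \cite{kampf} is of order $\diam(K)^{d-1}/\eps$); (iii) under Condition~\ref{cond2} your dimensional accounting is internally inconsistent: the defect is not ``$O(a^3)$ per unit $(d-1)$-area'' spread over $\partial K$ but is concentrated on an $O(a)$-neighbourhood of the inverse Gauss image of the $(d-2)$-dimensional set of switching directions, whose size is governed by the (possibly large) radii of curvature rather than by $V_{d-1}(K)$; this is presumably why the cited bound carries $\diam(K)^{d}/\eps^{3}$ rather than $\diam(K)^{d-1}/\eps$. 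To complete your route you would essentially have to reprove \cite[Lemmas 12 and 17]{kampf}; as written, the proposal establishes the trivial inequality $0\leq EV_d(K\oplus a\conv(C))-EV_d(K\oplus aC)$ and a plausible heuristic for the rest.
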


\begin{proof}
If $L$ is convex with twice differentiable support function and contains a ball of radius $\eps$, then \cite[Lemma 12]{kampf} shows that there is an $M_1'>0$ depending only on $d$ and $C$ such that
\begin{equation*}
0\leq V_d(L\oplus a\conv(C)) - V_d(L\oplus aC) \leq M_1'\frac{\diam(L)^{d-1}\vee 1}{\eps}a^2.
\end{equation*}
By \cite[Theorem 3.3.1]{schneider}, an arbitrary compact convex body $K$ can be approximated by a sequence $L_n$ of convex bodies with smooth support functions. We may assume that $L_n$ contains a ball of radius $\eps-\frac{1}{n}$. The map $L\mapsto V_d(L\oplus aC)$ is continuous on the space of compact convex sets with interior points, see \cite[Lemma 10]{kampf}, so by continuity of the diameter function, the same inequality holds for $L$ replaced by $K$. The assumptions of the lemma allow us to take the mean value.

Similarly, \cite[Lemma 17]{kampf} shows that if $L$ is $\eps$-regular with twice differentiable support function, then there is an $M_2'>0$ such that
\begin{equation*}
0\leq V_d(L\oplus a\conv(C)) - V_d(L\oplus aC) \leq M_2'\frac{\diam(L)^{d}\vee 1}{\eps^3}a^3.
\end{equation*}
If $K$ is $\eps$-regular we may write $K= K'\oplus \eps B^d$ \cite[Theorem 3.2.2]{schneider}
where $K'$ is also convex. Approximating $K'$ as above yields the claim in this situation as well.
\end{proof}

For convex sets $C,K\subseteq \R^d$,
\begin{equation}\label{Defn:mixedvolumes}
V_d({K}\oplus a\check{C}) = \sum_{m=0}^d \binom{d}{m}a^m V(\check{C}[m],K[d-m])
\end{equation}
with nonnegative numbers $V(\check{C}[m],K[d-m])$ which are the so-called mixed volumes, see \cite[Theorem 5.1.7]{schneider}.
The integrability condition \eqref{intcond} ensures that $EV(\check{C}[m],K[d-m])<\infty$ for all $m$.

Combining this with \eqref{exp} and Lemma \ref{convbound}, we obtain:
\begin{prop}\label{pol}
Let $C\subseteq \R^d$ be a non-empty compact set.
If the grain distribution satisfies Condition \ref{cond1}, then for $a $ sufficiently small
\begin{align*}
P(aC \subseteq \R^d \backslash Z)= e^{-\gamma EV_d(K)}(1 -  ad\gamma  EV(\conv(\check{C})[1],{K}[d-1]))+O(a^2).
\end{align*}
If the  grain distribution satisfies Condition \ref{cond2}, then for $a $ sufficiently small
\begin{align*}
 P(aC \subseteq \R^d \backslash Z) ={}& e^{-\gamma EV_d(K)} -ad\gamma e^{-\gamma EV_d(K)}EV(\conv(\check{C})[1],{K}[d-1])\\
 &- a^2e^{-\gamma EV_d(K)}\bigg(\frac{d(d-1)}{2}\gamma EV(\conv(\check{C})[2],{K}[d-2])\\
&-\frac{d^2}{2}\gamma^2 (EV(\conv(\check{C})[1],{K}[d-1]))^2\bigg)+O(a^3).
\end{align*}
\end{prop}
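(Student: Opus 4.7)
The plan is to combine the three ingredients assembled in this section: the exponential formula \eqref{exp}, the convex-hull approximation of Lemma \ref{convbound}, and the mixed volume expansion \eqref{Defn:mixedvolumes}.

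First, since $\check{C}$ is compact whenever $C$ is, I would apply Lemma \ref{convbound} with $C$ replaced by $\check{C}$ to write
$$EV_d(K\oplus a\check{C}) = EV_d(K\oplus a\conv(\check{C})) + O(a^r),$$
where $r=2$ under Condition \ref{cond1} and $r=3$ under Condition \ref{cond2}. Inserting this into \eqref{exp} and using that the exponent stays bounded in $a$ (thanks to \eqref{intcond} and the compactness of $\conv(\check{C})$), the pointwise estimate $e^{-s-O(a^r)} = e^{-s}(1+O(a^r))$ lets the same error order carry over to $P(aC \subseteq \R^d \backslash Z)$.

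Next, because $\conv(\check{C})$ is convex, \eqref{Defn:mixedvolumes} applies. Taking expectations term by term, which is legitimate by \eqref{intcond}, gives
$$\gamma EV_d\bigl(K \oplus a\conv(\check{C})\bigr) = \gamma EV_d(K) + a\,d\gamma\alpha_1 + a^2\tbinom{d}{2}\gamma\alpha_2 + O(a^3),$$
where I write $\alpha_m := EV\bigl(\conv(\check{C})[m],K[d-m]\bigr)$ and the $O(a^3)$ absorbs the contributions of the higher mixed volumes $V(\conv(\check{C})[m],K[d-m])$ for $m\geq 3$.

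Finally, I would Taylor-expand the exponential. Setting $y := a\,d\gamma\alpha_1 + a^2\tbinom{d}{2}\gamma\alpha_2 + O(a^3)$, one has $y^2 = a^2 d^2\gamma^2\alpha_1^2 + O(a^3)$ and $y^3 = O(a^3)$, so
$$e^{-\gamma EV_d(K) - y} = e^{-\gamma EV_d(K)}\bigl(1 - y + \tfrac{1}{2}y^2\bigr) + O(a^3).$$
Substituting, using $\binom{d}{2}=d(d-1)/2$, and collecting by powers of $a$ yields the second formula of the proposition; truncating at first order and invoking only Condition \ref{cond1} (so $r=2$) yields the first. The only real subtlety is the bookkeeping of the two independent error sources---the $O(a^r)$ from Lemma \ref{convbound} and the Taylor remainder---and the verification that the harmless prefactor $e^{-\gamma EV_d(K)}$ does not degrade the estimate; both are routine given the uniform bound on the exponent supplied by \eqref{intcond}.
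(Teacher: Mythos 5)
Your proposal is correct and follows exactly the route the paper takes: the paper derives Proposition \ref{pol} by combining \eqref{exp}, Lemma \ref{convbound}, and the mixed volume expansion \eqref{Defn:mixedvolumes}, then Taylor-expanding the exponential, which is precisely your argument. The bookkeeping of the error terms and the expansion $e^{-s-y}=e^{-s}(1-y+\tfrac{1}{2}y^2)+O(a^3)$ match the stated formulas.
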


%\begin{prop}\label{pol}
%Let $C\subseteq \R^d$ be a compact set.
%If the grain distribution satisfies Condition \ref{cond1}, then there is an $M(C)>0$ such that for $a $ sufficiently small
%\begin{align*}
%\MoveEqLeft |P(aC \subseteq \R^d \backslash Z)- e^{-\gamma EV_d(K)}(1 -  a\gamma  EV(\conv({C})[1],\check{K}[d-1]))|\\ &\leq M(C)a^2.
%\end{align*}
%If the  grain distribution satisfies Condition \ref{cond2}, there is an $M'(C)>0$ such that for $a $ sufficiently small
%\begin{align*}
%\MoveEqLeft \bigg|P(aC \subseteq \R^d \backslash Z) - e^{-\gamma EV_d(K)} \\
%&+a\gamma EV(\conv({C})[1],\check{K}[d-1])e^{-\gamma EV_d(K)}\\
%& + a^2(\gamma EV(\conv({C})[2],\check{K}[d-2])-\frac{\gamma^2}{2} (EV(\conv({C})[1],\check{K}[d-1]))^2)e^{-\gamma EV_d(K)}\bigg|\\
%&\leq M'(C)a^3.
%\end{align*}
%\end{prop}

Next we try to obtain a more explicit expression for the mixed volumes in Proposition \ref{pol}. For convex bodies $C,K$ it is well known, see \cite[(5.19)]{schneider}, that
\begin{equation}\label{MixedVolEq}
V(\check{C}[1],K[d-1])= \frac{1}{d}\int_{S^{d-1}} h(\check{C},u) S_{d-1}(K,du).
\end{equation}
Here $S_{d-1}(K,\cdot)$ is the $(d-1)$th surface area measure of $K$ on $S^{d-1}$ and $h(C,u)=\sup \{\langle c,u \rangle, c\in C\}$ is the support function of $C$. This yields:
\begin{prop}\label{hBW}
Let $B,W\subseteq \R^d$ be two finite non-empty sets. Suppose that the grain distribution satisfies Condition \ref{cond1}. Then for $a$ sufficiently small,
\begin{align}\nonumber
\MoveEqLeft P({aB\subseteq Z  , aW \subseteq \R^d \backslash Z }) \\ \label{hit}
& =  a\gamma e^{-\gamma EV_d(K)} E\int_{S^{d-1}} (- h(B\oplus \check{W},u))^+ S_{d-1}(K, du)+ O(a^2),\\ \nonumber
\MoveEqLeft P(aB\subseteq Z  ) = 1 - e^{-\gamma EV_d(K)} + a\gamma e^{-\gamma EV_d(K)} E\int_{S^{d-1}}  h(B ,u) S_{d-1}(K, du)\\ 
&+ O(a^2).\nonumber
%& \leq a^2C(B,W).
\end{align}
\end{prop}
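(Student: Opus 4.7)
The plan is to combine the inclusion--exclusion identity \eqref{EI1} (applied to the sets $aB$ and $aW$) with the asymptotic expansion from Proposition \ref{pol}, and then collapse the resulting alternating sum of support functions to a single positive-part expression.

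Applying \eqref{EI1} gives
$$P(aB\subseteq Z,\, aW\subseteq \R^d\setminus Z) \;=\; \sum_{S\subseteq B}(-1)^{|S|}\, P\bigl(a(S\cup W)\subseteq \R^d\setminus Z\bigr).$$
I expand each term using Proposition \ref{pol} and rewrite the first-order mixed volume via \eqref{MixedVolEq}, using $h(\conv X,u)=h(X,u)$ for a finite set $X$. Since $B\neq \emptyset$, the constant contribution $e^{-\gamma EV_d(K)}$ is annihilated by $\sum_{S\subseteq B}(-1)^{|S|}=0$, so only the first-order term survives (with the $O(a^2)$ preserved by the finite sum). Up to the common prefactor $-a\gamma e^{-\gamma EV_d(K)}$, the result is the expectation of the integral against $S_{d-1}(K,\cdot)$ of
$$F(u)\;:=\;\sum_{S\subseteq B}(-1)^{|S|}\, h\bigl(\check S\cup\check W,u\bigr).$$

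The core of the argument is to evaluate $F(u)$ in closed form. Writing $\alpha_s:=-\langle s,u\rangle$ for $s\in B$ and $\beta:=h(\check W,u)$, one has $h(\check S\cup\check W,u)=\max(\max_{s\in S}\alpha_s,\beta)$ with the $S=\emptyset$ term equal to $\beta$. Subtracting off $\beta\sum_{S\subseteq B}(-1)^{|S|}=0$ rewrites $F$ as
$$F(u)\;=\;\sum_{\emptyset\neq S\subseteq B}(-1)^{|S|}\bigl(\max_{s\in S}\alpha_s-\beta\bigr)^+ .$$
I then apply the layer-cake representation $y^+=\int_0^\infty\mathbf 1\{y>t\}\,dt$ and expand the indicator by ordinary inclusion--exclusion on the union $\bigcup_{s\in S}\{\alpha_s-\beta>t\}$. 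Interchanging the order of the two alternating sums, the inner sum $\sum_{T\subseteq S\subseteq B}(-1)^{|S|}$ vanishes for every $T\subsetneq B$ and only $T=B$ survives, giving $F(u)=-\bigl(\min_{s\in B}(\alpha_s-\beta)\bigr)^+$. Since $\min_s\alpha_s=-h(B,u)$ and $h(B\oplus\check W,u)=h(B,u)+h(\check W,u)$, this simplifies to $F(u)=-\bigl(-h(B\oplus\check W,u)\bigr)^+$, and substitution yields \eqref{hit}.

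For the marginal $P(aB\subseteq Z)$ I use the complementary decomposition $\{aB\not\subseteq Z\}=\bigcup_{b\in B}\{ab\notin Z\}$ together with inclusion--exclusion to obtain $P(aB\subseteq Z)=1+\sum_{\emptyset\neq S\subseteq B}(-1)^{|S|}P(aS\subseteq\R^d\setminus Z)$. Expanding each term by Proposition \ref{pol} and \eqref{MixedVolEq} as before, the constants combine to $1-e^{-\gamma EV_d(K)}$, and the remaining linear contribution reduces to the simpler identity $\sum_{\emptyset\neq S\subseteq B}(-1)^{|S|}h(\check S,u)=h(B,u)$, which can be proved by ordering the values $\alpha_s$ and grouping subsets by the index of their maximizer, or equivalently obtained as the degenerate case $\beta\to-\infty$ of the computation above. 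The principal obstacle is the nested combinatorial identity for $F(u)$; once that is in hand, the remaining interchanges of finite sums, expectations, and integrals against $S_{d-1}(K,\cdot)$ are routine under Condition \ref{cond1} thanks to the finiteness of $B$ and $W$.
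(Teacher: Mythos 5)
Your argument is correct and follows the paper's overall strategy --- inclusion--exclusion \eqref{EI1}, termwise expansion via Proposition \ref{pol} and \eqref{MixedVolEq}, and a pointwise evaluation of the alternating sum $\sum_{S\subseteq B}(-1)^{|S|}h(\check S\cup\check W,u)$ --- but you establish the central combinatorial identity by a genuinely different route. The paper splits $B$ into $B_1=\{b\in B\mid -\langle b,u\rangle>h(\check W,u)\}$ and its complement, invokes the inclusion--exclusion formula for maxima to obtain $h(B,u)=\sum_{\emptyset\neq S\subseteq B}(-1)^{|S|}h(\check S,u)$, and finishes with a case analysis on whether $B=B_1$. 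You instead write $\max(x,\beta)=\beta+(x-\beta)^+$, apply the layer-cake formula $y^+=\int_0^\infty\mathds{1}_{\{y>t\}}\,dt$, and collapse the double alternating sum via $\sum_{T\subseteq S\subseteq B}(-1)^{|S|}=(-1)^{|B|}\mathds{1}_{\{T=B\}}$, which yields $-\bigl(\min_{s\in B}(\alpha_s-\beta)\bigr)^+=-(-h(B\oplus\check W,u))^+$ in one stroke and avoids the case distinction entirely. What your version gives up is that the paper's $B_1$-splitting template is reused almost verbatim in the proof of Proposition \ref{propmixedvols}, where the second-order analogue involving $\II^-_x$ does not reduce to a single positive part, so the layer-cake shortcut would not carry over there. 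One caveat on the second display: carrying your expansion through produces the linear term $-a\gamma e^{-\gamma EV_d(K)}E\int_{S^{d-1}}h(B,u)\,S_{d-1}(K,du)$, i.e.\ with a minus sign; since this integral equals $d\,EV(\conv(B)[1],K[d-1])\geq 0$ and $P(aB\subseteq Z)\leq P(ab\in Z)=1-e^{-\gamma EV_d(K)}$, the minus sign is the correct one, so the plus sign in the stated formula is a typo that your derivation should flag rather than silently reproduce.
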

%\begin{prop}\label{hBW}
%Let $B,W\subseteq \R^d$ be two finite sets. Suppose that the grain distribution satisfies Condition \ref{cond1}. Then for $a$ sufficiently small,
%\begin{align*}
%\MoveEqLeft \bigg|P({aB\subseteq Z  , aW \subseteq \R^d \backslash Z }) - (\mathds{1}_{\{W=\emptyset\}}-e^{EV_d(K)}(\mathds{1}_{\{W=\emptyset\}}-\mathds{1}_{\{ B=\emptyset \}}))\\
%&+ a\gamma e^{EV_d(K)} E\int_{\partial K} (- h(B\oplus \check{W},u))^+ S_{d-1}(K, du)\bigg|\\
%& \leq a^2C(B,W).
%\end{align*}
%\end{prop}

Proposition \ref{hBW} is also derived in \cite[Theorem 4]{rataj} with a different approach using geometric measure theory.

\begin{proof}
We consider only the first formula. The second one is similar, only simpler.
From \eqref{EI1} and Proposition \ref{pol}, we obtain:
\begin{align*}
\MoveEqLeft P({aB\subseteq Z  , aW \subseteq \R^d \backslash Z })
= \sum_{S\subseteq B} (-1)^{|S|}P(a(S\cup W) \subseteq \R^d \backslash Z)\\
%\MoveEqLeft \sum_{S\subseteq B} (-1)^{|S|} P(a(S\cup W) \subseteq \R^d \backslash Z) \\
={}& e^{-\gamma EV_d(K)}\sum_{S\subseteq B} (-1)^{|S|} (1-a\gamma dEV(\conv(\check{S}\cup \check{W}) [1],K[d-1]))+O(a^2)\\
 ={}&  - a\gamma e^{-\gamma EV_d(K)} \sum_{S\subseteq B} (-1)^{|S|} E\int_{S^{d-1}}h(\check{S}\cup \check{W},u)S_{d-1}(K,du)+O(a^2).
%\\
 %&+\mathds{1}_{\{W=\emptyset\}}-e^{-\gamma EV_d(K)}(\mathds{1}_{\{W=\emptyset\}}-\mathds{1}_{\{ B=\emptyset \}})\bigg|\\
 %&\leq C(B,W)a^2.
\end{align*}
%The integrability condition ensures that all $EV(B[m],K[d-m])<\infty$.

Consider a fixed $u\in S^{d-1}$ and let $B_1\subseteq B$ be the set $\{b\in B \mid -\langle b,u \rangle > h(\check{W},u)\}$. Then we may compute:
\begin{align}\nonumber
\MoveEqLeft \sum_{S\subseteq B} (-1)^{|S|} h(\check{S}\cup \check{W},u)  \\ \label{udregning}
&=
\sum_{\emptyset\neq S\subseteq B} (-1)^{|S|} \max \{h(\check{S},u),h(\check{W},u)\} +h(\check{W},u) \\\nonumber
&= \sum_{\emptyset\neq S\subseteq B, S\cap B_1 \neq \emptyset} (-1)^{|S|} h(\check{S},u) + \sum_{\emptyset\neq S\subseteq B\backslash B_1 } (-1)^{|S|} h(\check{W},u)+h(\check{W},u)  \\ \nonumber
%&=  \sum_{S\subseteq B, S\cap B_1 \neq \emptyset} (-1)^{|S|} h(\check{S},u)+h(\check{W},u)  \mathds{1}_{\{B =B_1\}}\\ \nonumber
&=  \sum_{\emptyset\neq S\subseteq B} (-1)^{|S|} h(\check{S},u)-\sum_{\emptyset\neq S\subseteq B\backslash B_1} (-1)^{|S|} h(\check{S},u) +h(\check{W},u)  \mathds{1}_{\{B =B_1\}}.
\end{align}

Using the inclusion-exclusion principle for maxima
\begin{equation*}
\max\{x_1,\dots,x_k\}= \sum_{\emptyset \neq I\subseteq \{1,\dots,k\}}(-1)^{|I|+1} \min \{x_i,i\in I\},
\end{equation*}
we find that
\begin{align*}
h(B,u)=\sum_{\emptyset \neq S\subseteq B} (-1)^{|S|} h(\check{S},u).
\end{align*}
Thus
\begin{align*}
\MoveEqLeft \sum_{\emptyset \neq S\subseteq B} (-1)^{|S|} h(\check{S},u)  -
 \sum_{\emptyset \neq S\subseteq B\backslash B_1} (-1)^{|S|} h(\check{S},u)  \\
&=h(B,u)-h(B\backslash B_1,u)\mathds{1}_{\{ B\neq B_1\} }\\
&= h(B,u)\mathds{1}_{\{ B=B_1\} }.
\end{align*}
 
The situation $B=B_1$ is equivalent to $h(B,u)<-h(\check{W},u)$, so \eqref{udregning} equals
\begin{equation*}
-(- h(B\oplus \check{W},u))^+.
\end{equation*}
\end{proof}

\begin{rem}
The formula \eqref{hit} resembles the volumes of hit-and-miss transforms in the design based setting. These are given in \cite[Theorem 5]{rataj} for a deterministic set $X$ by
\begin{align*}
\MoveEqLeft V_d(z\in \R^d \mid {z+aB\subseteq X  , z+ aW \subseteq \R^d \backslash X }) \\
&= a\int_{S^{d-1}} (- h(B\oplus \check{W},u))^+ S_{d-1}(X, du)+ O(a^2).
\end{align*}
In \eqref{hit}, $X$ is replaced by the Blaschke body $B(Z) $ associated with $Z$. This is the convex body with surface area measure $S_{d-1}(B(Z), \cdot) = \gamma ES_{d-1}(K,\cdot)$,  i.e.\ a sort of average body, see \cite[Section 4.6]{SW}. Thus, we have
\begin{align*}
\MoveEqLeft \gamma  E\int_{S^{d-1}} (- h(B\oplus \check{W},u))^+ S_{d-1}(K, du)\\
&=  \int_{S^{d-1}} (- h(B\oplus \check{W},u))^+ S_{d-1}(B(Z), du).
%& \leq a^2C(B,W).
\end{align*}
\end{rem}

To describe $V(\check{C}[2],K[d-2])$, we introduce a bit more notation. For $\eps$-regular $K$ , let $u(x)$ be the uniquely determined outward pointing normal at $x\in \partial K$. The principal directions and principal curvatures are defined at almost all $x\in \partial K$, c.f.\ \cite{federer}, allowing us to define the second fundamental form $\II_x$. For $s\in \R^d$ we let $\II_x(s)$ denote $\II_x(\pi_x s,\pi_x s)$ where $\pi_x : \R^d \to T_x \partial K$ is the orthogonal projection.  
For a compact set  $P$, we let
\begin{align*}
\II^-_x(P){}&=\inf\{\II_x(p)\mid p\in F(\check{P},u(x)) \}\\
\II^+_x(P){}&=\sup\{\II_x(p)\mid p\in F({P},u(x)) \}.
\end{align*}
where $F(P,u)$ is the support set $\{p\in P \mid h(P,u)=\langle p,u \rangle\}$. Let $\Ha^k$ denote the $k$-dimensional Hausdorff measure.

%The proof of \cite{} can easily be modified to show that if $K$ is $\eps$-regular a.s., then 
\begin{prop}\label{Qprop}
Suppose $K\subseteq \R^d$ is a convex $\eps$-regular set and $P\subseteq \R^d$ is a convex polytope with vertex set $P_0$. Then
\begin{equation*}
{V}(\check{P}[2], {K}[d-2])= \frac{1}{d(d-1)} \int_{\partial K} ( h(\check{P}_0,u(x))^2\tr \II_x - \II_x^-({P_0}) )\Ha^{d-1}(dx).
\end{equation*}
\end{prop}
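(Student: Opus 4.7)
The plan is to identify the coefficient of $a^2$ in $V_d(K\oplus a\check{P})$ and match it with $\binom{d}{2}V(\check{P}[2],K[d-2])$ coming from the Steiner-type expansion \eqref{Defn:mixedvolumes}. First I would reduce to the case that $K$ has $C^2$ boundary of positive Gauss curvature: by \cite[Theorem 3.2.2]{schneider}, an $\eps$-regular $K$ may be written as $K'\oplus\eps B^d$, and $K'$ can be smoothly approximated (\cite[Theorem 3.3.1]{schneider}) by convex bodies with smooth strictly convex support functions. Mixed volumes are Hausdorff-continuous, and the right-hand side integral is controlled continuously under such approximations thanks to the uniform curvature bounds afforded by $\eps$-regularity.

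For $K$ with $C^2$ boundary I would decompose the Minkowski shell $K\oplus a\check{P}\setminus K$ according to the normal fan of $\check{P}$. Let $F_K\colon S^{d-1}\to\partial K$ denote the inverse Gauss map and, for each face $\sigma$ of $\check{P}$ with normal cone $N(\sigma)\subseteq S^{d-1}$, set $R_\sigma=F_K(N(\sigma))\subseteq\partial K$. Parametrize the corresponding piece of the shell by $\Phi_\sigma(x,p,t)=x+tp$ for $(x,p,t)\in R_\sigma\times\sigma\times[0,a]$; a short dimension count combined with the positive curvature assumption shows that for sufficiently small $a$ these pieces cover the shell with pairwise overlaps of Lebesgue measure zero. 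For $\sigma=\{v_i\}$ a vertex the Jacobian of $\Phi_{v_i}$ is $\langle v_i,u(x)\rangle$, independent of $t$, so vertex shells contribute only at order $a$. Edge shells contribute at order $a^2$, higher-dimensional faces at order $a^3$ or smaller. For an edge $e$ with endpoints $v_i,v_j$, using the identity $\langle v_j-v_i,u\rangle=0$ on $N(e)$, a $2\times 2$ Jacobian computation in $\spa\{n_i(x),u(x)\}$ gives
\begin{equation*}
V_d(\text{shell of }e)=\frac{a^2}{2}\int_{R_e}\langle v_j-v_i,n_i(x)\rangle\,h(\check{P},u(x))\,\Ha^{d-2}(dx),
\end{equation*}
where $n_i(x)$ is the unit vector in $T_x\partial K$ orthogonal to $T_xR_e$ pointing from $R_{v_i}$ toward $R_{v_j}$; one verifies $\langle v_j-v_i,n_i\rangle>0$ from $n_i=\lambda W(v_j-v_i)$ with $\lambda>0$ and $W$ the positive definite Weingarten map.

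To rewrite this edge-indexed sum as a single surface integral, apply the divergence theorem on each $R_{v_i}$ to the tangential field $Y_i(x)=\langle v_i,u(x)\rangle\,\pi_xv_i$, where $\pi_x$ denotes orthogonal projection onto $T_x\partial K$. A direct computation using $du_x=W$ and the self-adjointness of $W$ yields
\begin{equation*}
\mathrm{div}_{T_x\partial K}Y_i(x)=\II_x(v_i)-\langle v_i,u(x)\rangle^2\tr\II_x.
\end{equation*}
Integrating $-\mathrm{div}\,Y_i$ over $R_{v_i}$ and summing over all vertices of $\check{P}$, each edge set $R_e\subseteq\partial R_{v_i}\cap\partial R_{v_j}$ is traversed from both sides with opposite outer normals $n_j=-n_i$; using $\langle v_i,u\rangle=\langle v_j,u\rangle=h(\check{P},u)$ on $N(e)$, the paired boundary contributions collapse into $h(\check{P},u)\langle v_j-v_i,n_i\rangle$. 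This yields
\begin{equation*}
\int_{\partial K}\bigl(h(\check{P}_0,u)^2\tr\II_x-\II_x^-(P_0)\bigr)\Ha^{d-1}(dx)=\sum_e\int_{R_e}h(\check{P},u)\langle v_j-v_i,n_i\rangle\,\Ha^{d-2}(dx),
\end{equation*}
which is exactly twice the $a^2$-contribution of the edge shells. Hence $\binom{d}{2}V(\check{P}[2],K[d-2])$ equals one half of the left-hand side above, and the result follows after dividing by $\binom{d}{2}=d(d-1)/2$.

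The main obstacle is the combinatorial accounting in the shell decomposition---verifying that for small enough $a$ the pieces $\Phi_\sigma$ really cover $K\oplus a\check{P}\setminus K$ with pairwise overlaps of Lebesgue measure zero---together with carrying the approximation argument through on the right-hand side so that the pointwise second fundamental form of the smoothings converges appropriately to that of the $\eps$-regular $K$.
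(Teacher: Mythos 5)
Your argument is correct in its core computations but follows a genuinely different route from the paper. The paper's proof is essentially a citation: it writes $V(\check{P}[2],K[d-2])$ as the second right derivative at $0$ of $\frac{1}{d(d-1)}V_d(K\oplus a\check{P}_0)$ and then invokes \cite[Theorem 4.1]{am2} twice (once as stated for $V_d(K\setminus(K\oplus a\check{P}_0))$, once with the roles of dilation and erosion interchanged for $V_d((K\oplus a\check{P}_0)\setminus K)$), adding the resulting boundary integrals. You instead rederive the second-order shell expansion from scratch via the normal-fan decomposition; I checked the vertex Jacobian $\langle v_i,u(x)\rangle$ (independent of $t$, hence no $a^2$ contribution), the edge Jacobian $t\,\langle v_j-v_i,n_i\rangle h(\check{P},u)$ with $n_i\parallel W(v_j-v_i)$, the identity $\operatorname{div}_{T_x\partial K}Y_i=\II_x(v_i)-\langle v_i,u(x)\rangle^2\tr\II_x$, and the telescoping of the boundary terms using $\langle v_i,u\rangle=\langle v_j,u\rangle=h(\check{P},u)$ on $N(e)$, and they are all correct, including the signs and the final factor $\tfrac12$. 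Your route is more self-contained and makes transparent where $-\II_x^-(P_0)$ comes from (it is the interior divergence whose boundary terms collapse onto the edge shells), which the paper's proof hides inside the cited theorem; the price is that you must justify the tiling and the smoothing yourself.

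Two points remain open. First, the shell picture presupposes $K\subseteq K\oplus a\check{P}$, i.e.\ $0\in\check{P}$; since the left-hand side is translation invariant while neither term on the right obviously is, you should either run the argument for the symmetric difference with a general translate or check directly that the right-hand side is unchanged under $P\mapsto P+q$ (the same divergence-theorem mechanism applied to $\tilde{Y}_i=\langle q,u\rangle\pi_x v_i$ does this). Second, and more substantially, the approximation step you defer is precisely the hard analytic content for merely $\eps$-regular $K$: the functional $K\mapsto\int_{\partial K}\II^-_x(P_0)\,\Ha^{d-1}(dx)$ involves the pointwise (Alexandrov) second fundamental form on the a.e.-partition $\{R_{v_i}\}$, and its stability under smooth approximation does not follow from Hausdorff convergence or weak convergence of surface area measures alone. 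The paper never faces this because \cite[Theorem 4.1]{am2} is proved directly for $r$-regular sets with curvatures defined a.e.\ and bounded by $1/\eps$; a self-contained version of your proof would need to supply this limit argument, e.g.\ via the representation $K=K'\oplus\eps B^d$ and the resulting uniform curvature bound.
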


The proof below is based on \cite{am2}, but see also \cite[Theorem 4.6]{daniel}.

\begin{proof}
By \eqref{Defn:mixedvolumes} and Lemma \ref{convbound},
\begin{equation*}
V(\check{P}[2], {K}[d-2])= \frac{d^2}{da^2_+} \frac{1}{d(d-1)}V_d({K}\oplus a\check{P}) = \frac{d^2}{da_+^2}\frac{1}{d(d-1)} V_d(K\oplus a\check{P_0})%=\frac{d^2}{da_+^2} V_d(K+a\check{P}_0\backslash K) - V_d(K\backslash K + \check{P}_0).
\end{equation*}
where $\frac{d^2}{da^2_+}$ is the second order right derivative at zero. A formula for $V_d\left((K \ominus a\check{B} )\backslash (K\oplus a \check{W})\right)$ where $\ominus$ is the Minkowski subtraction  is computed in \cite[Theorem 4.1]{am2}. As a special case we have that
\begin{align*}
\frac{d^2}{da_+^2} V_d\left(K\backslash (K\oplus a\check{P}_0)\right)={}& \int_{\partial K}\Big( (\II_x^-({P}_0)-h(\check{P}_0,u)^2\tr\II_x)\mathds{1}_{\{h(\check{P}_0,u)<0\}}\\
&+(\II_x^-({P}_0))^+\mathds{1}_{\{h(\check{P}_0,u)=0\}} \Big)d\Ha^{d-1}.
\end{align*}
By exactly the same line of proof as in  \cite[Theorem 4.1]{am2}, one could prove a formula for $V_d\left((K \oplus a\check{W} )\backslash (K\ominus a \check{B})\right)$. This amounts to switching the roles of $t_+(aB)$ and $t_-(aW)$ in \cite[(20)]{am2} and replacing the indicator function $\tau_{B,W}$ by $1-\tau_{B,W}$. From there, all arguments of the proof carry over. As a special case, one finds
\begin{align*}
\frac{d^2}{da_+^2} V_d\left( (K\oplus a\check{P}_0)\backslash K\right) ={}& \int_{\partial K}\Big( (h(\check{P}_0,u)^2\tr\II_x-\II_x^-({P}_0))\mathds{1}_{\{h(\check{P}_0,u)>0\}}\\
&-(\II_x^-(P_0))^- \mathds{1}_{\{h(\check{P}_0,u)=0\}} \Big)d\Ha^{d-1},
\end{align*}
and the claim follows.
\end{proof}

%\begin{rem}
%Since $K$ is convex, $\II_x$ is convex at every point $x$. Therefore, Proposition \ref{Qprop} can be formulated
%\begin{align*}
%{V}(\check{P}[2], {K}[d-2])={}& \frac{1}{d} \int_{S^{d-1}} ( h(\check{P},u(x))^2  S_{d-2} (K,du) \\
%& - \frac{1}{d(d-1)}\int_{\partial K} \II_x^-(P) \Ha^{d-1}(dx),
%\end{align*}
%where we used \cite[(4.2.5)]{schneider} and \cite[Lemma 4.2.2]{schneider}.
%Since $\II_x(p)$ is bounded by $d\eps^{-1}|p|^2$, both sides are continuous in $P$. Hence, this formula extends to all convex sets $P$.
%\end{rem}

Writing
\begin{align*}
\MoveEqLeft Q(K,B,W)=\frac{1}{2}\int_{\partial K} \Big(((h(B,u)^2-h(\check{W},u)^2)\tr\II-\II^+(B)+\II^-(W))\\
&\times \mathds{1}_{\{h(B\oplus{\check{W}},u)<0\}} + (\II^-(W)-\II^+(B))^+\mathds{1}_{\{h(B\oplus {\check{W}},u)=0\}}\Big) d\Ha^{d-1}
\end{align*}
for simplicity, we derive:
\begin{prop}\label{propmixedvols}
Suppose $K\subseteq \R^d$ is convex $\eps$-regular and $B,W\subseteq \R^d$ are non-empty finite sets. Then
\begin{align*}
& \sum_{S\subseteq B} (-1)^{|S|} V(\conv(\check{S}\cup\check{W})[2],K[d-2]) = -\binom{d}{2}^{-1} Q(K,B,W),\\
%\\
%&= -\frac{1}{2}\int_{\partial K} ((h(B,u)^2-h(\check{W},u)^2-\II^+(B)+\II^-(W))\mathds{1}_{\{h(B\oplus{\check{W}},u)<0\}}\\
%&\quad + (\II^-(W)-\II^+(B))^+\mathds{1}_{\{h(B\oplus {\check{W}},u)=0\}}) d\Ha^{d-1}.
&\sum_{\emptyset \neq S\subseteq B} (-1)^{|S|} V(\conv(\check{S})[2],K[d-2]) \\
&\qquad = -\binom{d}{2}^{-1}\int_{\partial K} (h(B,u(x))^2\tr \II_x-\II^+_x(B)) \Ha^{d-1}(dx).
\end{align*}
\end{prop}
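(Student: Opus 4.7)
The plan is to apply Proposition \ref{Qprop} term by term to the left-hand sides, swap the finite subset-sum with the integral over $\partial K$, and then evaluate the resulting pointwise combinatorial sum at $\Ha^{d-1}$-a.e.\ $x \in \partial K$. Away from a $\Ha^{d-1}$-null exceptional set on $\partial K$, the finitely many scalar products $\langle b, u(x)\rangle$, $b \in B$, and $\langle w, u(x)\rangle$, $w \in W$, are pairwise distinct; at such generic $x$ the vertex set of $\conv(\check{S}\cup\check{W})$ coincides with $\check{S}\cup\check{W}$ itself, and all argmins and argmaxes are unique.

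The key combinatorial observation is that for any finite set $A$, any injective $\phi : A \to \R$ and any function $g$ on $A$,
\[
\sum_{\emptyset \neq S \subseteq A}(-1)^{|S|}\, g\bigl(\textstyle\arg\min_{s\in S}\phi(s)\bigr) \;=\; -g\bigl(\textstyle\arg\max_{a\in A}\phi(a)\bigr),
\]
since collecting subsets $S$ with argmin $a \in A$ gives $S = \{a\}\cup T$ with $T \subseteq \{s : \phi(s) > \phi(a)\}$, whose signed count equals $-\mathds{1}_{\{a = \arg\max\}}$. For the second identity, apply this at generic $x$ with $\phi(s) = \langle s, u(x)\rangle$ twice: once with $g(s) = \langle s,u\rangle^2$ to handle $h(\check{S},u)^2 = (\min_{s\in S}\langle s,u\rangle)^2$, and once with $g(s) = \II_x(s)$ to handle $\II^-_x(S) = \II_x(\arg\min_{s\in S}\langle s,u\rangle)$. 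Both applications collapse the inner sum in the Qprop integrand to $-h(B,u)^2\tr\II_x + \II^+_x(B)$, yielding the claim after $\Ha^{d-1}$-integration.

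For the first identity, fix $u = u(x)$ and set $B_1 = \{b \in B : \langle b,u\rangle < \min_{w\in W}\langle w,u\rangle\}$, the same set as in the proof of Proposition \ref{hBW}; note that $B = B_1$ is equivalent to $h(B\oplus\check{W},u) < 0$. For each $S \subseteq B$ the argmax of $\langle\cdot,u\rangle$ on $\check{S}\cup\check{W}$ lies in $-S$ when $S\cap B_1 \neq \emptyset$ and in $-W$ otherwise, so both $h(\check{S}\cup\check{W},u)^2$ and $\II^-_x(S\cup W)$ reduce either to a function of $S$ alone or to a $W$-quantity independent of $S$. Splitting $\sum_{S\subseteq B}$ into $\{S \subseteq B\setminus B_1\}$ and $\{S\cap B_1\neq\emptyset\}$, using $\sum_{S\subseteq B\setminus B_1}(-1)^{|S|} = \mathds{1}_{\{B=B_1\}}$ to collapse the $W$-only part, and applying the combinatorial lemma to the $B_1$-meeting subsets, mirrors precisely the case analysis in \eqref{udregning}. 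The outcome is $-\bigl((h(B,u)^2 - h(\check{W},u)^2)\tr\II_x - \II^+_x(B) + \II^-_x(W)\bigr)$ on $\{h(B\oplus\check{W},u) < 0\}$ and $0$ on $\{h(B\oplus\check{W},u) > 0\}$, reproducing the $\mathds{1}_{\{<0\}}$-summand in the definition of $Q(K,B,W)$.

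The delicate step, which I expect to be the main obstacle, is the tie set $\{h(B\oplus\check{W},u) = 0\}$, where $\max_{b}\langle b,u\rangle = -h(\check{W},u)$ is realised simultaneously in $B$ and in $W$. On this set the polytope vertex set of $\conv(\check{S}\cup\check{W})$ contains vertices from both $-S$ and $-W$ lying on the common support face, so the vertex-set infimum appearing in Proposition \ref{Qprop} equals $\min(\II^-_x(W),\II^+_x(B))$ rather than just one of them; combined with the boundary $\mathds{1}_{\{h(\check{P}_0,u)=0\}}$ corrections produced in the Qprop derivation, this is exactly what generates the positive-part term $(\II^-_x(W) - \II^+_x(B))^+$ in $Q$. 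One must verify this carefully, since the tie set can have positive $\Ha^{d-1}$-measure when $K$ has a flat boundary facet whose outward normal equalises some $\langle b,u\rangle$ with some $\langle w,u\rangle$, so the result cannot be dismissed as a null-set artefact.
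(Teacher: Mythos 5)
Your overall strategy --- apply Proposition \ref{Qprop} termwise, interchange the subset sum with the integral over $\partial K$, and evaluate the resulting pointwise alternating sum via an inclusion--exclusion identity with a case distinction on the sign of $h(B\oplus\check{W},u)$ --- is the same as the paper's, and your combinatorial lemma is exactly the identity $g(\max)=\sum_{\emptyset\neq I}(-1)^{|I|+1}g(\min_I)$ that the paper invokes. The gap lies in the treatment of ties, which is where essentially all the content of the proposition sits. Your opening reduction to the case where the values $\langle b,u(x)\rangle$, $b\in B$, and $\langle w,u(x)\rangle$, $w\in W$, are pairwise distinct off a $\Ha^{d-1}$-null set is false for general $\eps$-regular convex bodies: for $K=[0,e_3]\oplus\eps B^3$ the cylindrical part of $\partial K$ has positive $\Ha^{2}$-measure and $\langle b-b',u(x)\rangle=0$ holds there whenever $b-b'$ is parallel to $e_3$. (You concede this yourself in your last paragraph for $B$-versus-$W$ ties, contradicting your first paragraph.) As a consequence, two things remain unproven on sets of possibly positive measure. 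First, internal ties within $B$: when the support set $F(B,u(x))$ contains several points, $\II^+_x(B)$ is a supremum over that set and your unique-argmax lemma does not apply. The paper handles this by decomposing $B$ into the level sets $B_1\cup\dots\cup B_k$ of $\langle\cdot,u\rangle$, showing that only subsets of the top level survive the alternating sum, and then applying the inclusion--exclusion identity for maxima to the values $\II_x(b)$, $b\in B_1$. This is needed for both identities, in particular for the second one, which you treat as if it were entirely generic.

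Second, the tie $h(B\oplus\check{W},u)=0$, which you correctly single out as the main obstacle but do not resolve, and for which your proposed mechanism is not right. Proposition \ref{Qprop} as stated contains no $\mathds{1}_{\{h(\check{P}_0,u)=0\}}$ correction --- those indicators occur only in intermediate steps of its proof and are absorbed into the final formula --- so the positive part cannot come from there. Moreover, for an individual $S$ the quantity $\II^-_x(S\cup W)$ on the tie set equals $\min(\II^-_x(S),\II^-_x(W))$, which depends on $S$, not the $S$-independent $\min(\II^-_x(W),\II^+_x(B))$ you state. The term $(\II^-_x(W)-\II^+_x(B))^+$ is generated by the alternating sum itself: the paper evaluates $\sum_{S\subseteq B_1}(-1)^{|S|}\II^-_x(S\cup W)$ on the tie set by splitting $B_1$ according to $B_1^0=\{b\in B_1\mid \II_x(b)\leq\II^-_x(W)\}$. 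Without these two computations your argument establishes the claimed identities only at points of $\partial K$ in general position, which need not be $\Ha^{d-1}$-almost all of $\partial K$.
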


\begin{rem}\label{probo2}
By Equation \eqref{EI1}, Proposition \ref{pol}, \eqref{MixedVolEq} and Proposition \ref{propmixedvols},
\begin{align}\nonumber
\MoveEqLeft\frac{d^2}{da^2_+} P(aB\subseteq Z, aW \subseteq \R^d \backslash Z) =e^{-\gamma EV_d(K)}\bigg( 2\gamma  Q(K,B,W) \\
&+\sum_{S\subseteq B} (-1)^{|S|}  \gamma^2\bigg(E\int_{S^{d-1}}h(\check{S}\cup \check{W},u)S_{d-1}(K,du)\bigg)^2\bigg).\label{secondderiv}
\end{align}
The first term is similar to what we see for a deterministic set \cite[Theorem 4.1]{am2}, whereas the second term is new and must originate from the underlying distribution. This is, however, desirable, since it corresponds to the second term in the formula for $\altoverline{V}_{d-2}(Z)$ in \eqref{specific}. The sum in \eqref{secondderiv} does not seem to reduce to anything simple. In particular, Table \ref{Qer} shows that it does not need to vanish if $h(B\oplus \check{W},u)\geq 0$ for all $u\in S^{d-1}$, that is, if $(B,W)$ cannot be separated by a hyperplane. This is very different from the design based setting where such configurations do not contribute to the second order formulas. It is a consequence of the fact that we allow grains to overlap in the Boolean model, otherwise such configurations would not occur for sufficiently small $a$.
\end{rem}

\begin{proof}
We only consider the first equality. The second is shown similarly.
By Proposition \ref{Qprop} we must consider
\begin{align*}
\sum_{S\subseteq B} (-1)^{|S|} \int_{\partial K} ( h(\check{S}\cup\check{W},u(x))^2\tr \II_x - \II_x^-(S\cup W) )\Ha^{d-1}(dx).
\end{align*}
%The second order term in $I_{S\cup W}$ is 
%\begin{align*} 
%E\int_{\partial K}  (\tfrac{1}{2}( -\langle \nabla \eta_K(-x), u\rangle +\eta_K(-x)\tr(\II)) h(  \check{S}\cup \check{W},u)^2 -\tfrac{1}{2}\eta_K(-x)\II^-(S\cup W)) d\Ha^{d-1} 
%\end{align*}

The same argument as in the proof of Proposition \ref{hBW}, now using the relation
\begin{equation*}
\max\{x_1,\dots,x_k\}^2= \sum_{\emptyset \neq I\subseteq \{1,\dots,k\}}(-1)^{|I|+1} \min \{x_i,i\in I\}^2,
\end{equation*}
shows that 
\begin{align*}
\sum_{S\subseteq B} (-1)^{|S|+1} h(\check{S}\cup \check{W},u)^2= (h(B,u)^2-h(\check{W},u)^2)\mathds{1}_{\{h(B\oplus \check{W},u)<0\}}.
\end{align*}

%$B\cup -F(\check{W},u)$
Fix $x\in \partial K$ and let $u=u(x)$. Write $B$ as a disjoint union $B_1 \cup \dots \cup B_k$ of non-empty sets such that there are real numbers $s_1> \dots > s_k$ with $\langle b,u\rangle = s_i$ for all $b\in B_i$. Then
\begin{align*}
\sum_{S\subseteq B} (-1)^{|S|} \II^-_x({S} \cup {W})={}& \II^-_x( {W}) +\sum_{m=1}^k\prod_{i=1}^{m-1} \bigg(\sum_{S_i\subseteq B_i} (-1)^{|S_i|} \bigg)\\
&\times \sum_{\emptyset \neq S_m\subseteq B_m}(-1)^{|S_m|} \II^-_x( S_m \cup {W}).
\end{align*} 
Note that all terms with $m>1$ vanish because $\sum_{S_1\subseteq B_1} (-1)^{|S_1|} =0$. Hence
\begin{equation*}
\sum_{S\subseteq B} (-1)^{|S|} \II^-_x({S}\cup {W})=  \sum_{ S\subseteq B_1}(-1)^{|S|}\II^-_x( S \cup {W}).
\end{equation*}   
%Let $s_W=h(\check{W},u)$. 
There are now three possibilities: $h(\check{W},u)<-h(B,u)$, $h(\check{W},u)>-h(B,u)$, and $h(\check{W},u)=-h(B,u)$.

%$B_1=F(B,u)$
%$F(\check{B_1}\cup \check{W},u)=$
 The first inequality means that $F(\check{B}_1\cup \check{W},u)=F( \check{B}_1,u)$. In this case:
 \begin{align*}
 \sum_{ S\subseteq B_1}(-1)^{|S|}\II^-_x( S \cup {W}) &= \II^-_x( {W})-\sum_{\emptyset \neq S\subseteq B_1}(-1)^{|S|+1}\II^-_x( S )\\
 &= \II^-_x( {W})-\max \{\II_x(b) \mid b\in B_1\}\\
 &=\II^-_x( {W})-\II^+_x(B).
\end{align*}

In the second case, $F(\check{B}_1\cup \check{W},u)=F(\check{W},u)$. Hence
\begin{equation*}
\sum_{S\subseteq B} (-1)^{|S|} \II^-_x({S}\cup {W})=  \sum_{ S\subseteq B_1}(-1)^{|S|}\II^-_x( {W})=0.
\end{equation*}   
  
For the third case, let $B_{1}^0=\{b\in B_{1}\mid \II_x(b)\leq \II^-_x({W})\}$.
Then
\begin{align*}
 \MoveEqLeft \sum_{S\subseteq B_1} (-1)^{|S|} \II^-_x({S}\cup {W})\\
&= 
\sum_{S\subseteq B_1\backslash B_1^0} (-1)^{|S|} \II^-_x( {W}) + \sum_{S\cap B_1^0 \neq \emptyset } (-1)^{|S|} \II^-_x( S)\\
&= \II^-_x( {W})\mathds{1}_{\{B_1=B_1^0\}}+ \sum_{\emptyset \neq S\subseteq B_1 } (-1)^{|S|} \II^-_x( {S})-\sum_{\emptyset \neq S\subseteq B_1\backslash B_1^0} (-1)^{|S|} \II^-_x({S})\\
 &= \II^-_x( {W})\mathds{1}_{\{B_1=B_1^0\}}-\II^+_x(B) + \II^+_x(B)\mathds{1}_{\{B_1\neq B_1^0\}}\\
 &= (\II^-_x( {W})-\II^+_x(B))^+,
\end{align*}
since $B_1=B_1^0$ is equivalent to $\II^+_x(B)\leq \II^-_x( {W})$.
\end{proof}

In many cases, the expression for $Q(K,B,W)$ can be simplified, since:

\begin{proof}
The set $\{x\in \partial K \mid h(B\oplus \check{W},u(x))=0\}$ is contained in the union
\begin{equation*}
\bigcup_{b\in B,w\in W} D_{b,w}
\end{equation*} 
where $D_{b,w}=\{x\in \partial K \mid \langle b-w,u(x)\rangle=0\}$.

Let $b\in B$ and $w\in W$ be fixed. The function $g: \partial K \to \R$ given by $g(x)=\langle b-w,u(x)\rangle$ is almost everywhere continuously differentiable, see \cite{federer}. 
 A critical point of $g$ is a point $x\in \partial K$ with \ $dg_x(v)=\langle b-w,du_x(v)\rangle=0$ for all $v\in T_x\partial K=u(x)^\perp$.

If $\partial K$ is $C^2$,
the implicit function theorem says that every non-critical point of $g$ in $g^{-1}(0)=D_{b,w}$ has a neighborhood in which $g^{-1}(0)$ constitutes a $(d-2)$-dimensional $C^1$-manifold. 
Thus, it follows that the set of non-critical points of $g$ in $D_{b,w}$ has $\Ha^{d-1}$-measure 0.

Suppose that  $x\in D_{b,w}$, $\II^-_x(W)=\II_x(w)$, $\II^+_x(B)=\II_x(b)$, and that $x$ is a critical point of $g$. 
Then either $b=w$ or $b-w$ is a principal direction at $x$ with principal curvature $0$. Hence 
\begin{equation}\label{IIeq}
\II_x(b)-\II_x(w)=\II_x(\pi(b))-\II_x(\pi(w))=0
\end{equation}
 where $\pi$ is the projection onto $(b-w)^\perp\cap T_x \partial K$ so that $\pi(b)=\pi(w)$. Hence $\II^-_x (W)=\II^+_x(B)$.

In the convex case, $D_{b,w}$ is contained in the boundary of the cylinder $\pi_{{(b-w)}^{\perp}}(K)\times \text{span}(b-w)$, where $\pi_{{(b-w)}^{\perp}}: \R^d \to (b-w)^\perp$ is the projection. Clearly, any $x\in D_{b,w}$ is either the only point on the line through $x$ parallel to $b-w$, or $b-w$ is a principal direction at $x$ with principal curvature $0$. Thus we can use Equation \eqref{IIeq} above to obtain
\begin{align*}
\MoveEqLeft \Ha^{d-1}(D_{b,w}\cap \{\II(w)\neq \II (b)\})\\
& = \int_{ \pi_{{(b-w)}^{\perp}}(D_{b,w})} \int_{\spa(b-w)}\mathds{1}_{\partial K}(x+y) \mathds{1}_{\{\II_{x+y}(b)\neq \II_{x+y}(w)\} } dx \Ha^{d-1}(dy)\\
&=0.
\end{align*}

\end{proof}

\section{Applications to digital images}\label{dig}
%\section{Local algorithms for specific intrinsic volumes}
In this section we introduce our model for digital images and define local algorithms. We then apply the formulas of Section \ref{hitmiss} to determine their mean values when applied to Boolean models.

\subsection{Local algorithms}
Let $\La$ be a lattice in $\R^d$ spanned by linearly independent vectors $v_1,\dots,v_d$. We denote by $C_0^n$ the $n\times \dotsm \times n$ fundamental cell $C_0^n=\bigoplus_{i}[0,nv_i) $ and by $C_{0,0}^n=C_0^n\cap \La$ the set of lattice points lying in this set. Their respective translations by $z\in \R^d$ are denoted by $C_z^n=z+C_0^n$ and $C_{z,0}^n=z+C_{0,0}^n$. 

Let $Z$ be a stationary Boolean model and consider a digital black-and-white image of $Z$ in a compact convex observation window $A$. This is modeled as $Z\cap A\cap \La$. We change the resolution by multiplying $\La$ by a factor $a>0$.
From the information $Z\cap A\cap a\La$, we want to estimate the specific intrinsic volumes $\altoverline{V}_q(Z)$. A so-called local algorithm for this is defined as follows: 

Consider the set of $n\times \dots \times n$ configurations. These are pairs $(B,W)$ with $B\cup W=C_{0,0}^n$ and $B\cap W=\emptyset$.
We enumerate the elements of $C_{0,0}^n$ in the following way. For $x=\sum_{k=1}^d \lambda_kv_k \in C_{0,0}^n$ with $\lambda_k \in \{0,\dots, n-1\}$ write $x=x_i$ where
\begin{equation*}
i=\sum_{k=1}^d \lambda_k n^{k-1}.
\end{equation*}
 There are $2^{n^d}$ possible configurations. We denote these by $(B_l,W_l)$, $l=0,\dots,2^{n^d}-1$, where
\begin{equation*}
l=\sum_{i=0}^{{n^d}-1} 2^{i} \mathds{1}_{\{x_i\in B\}}.
\end{equation*}

A local algorithm for $\altoverline{V}_q$ is an algorithm of the form
\begin{align}\label{loces}
\hat{V}_q(Z\cap A){}&= a^{q-d} \sum_{l = 0}^{2^{n^d}-1} w_l^{(q)} \frac{N_l(Z\cap A\cap a\La)}{N(A)}
\end{align}
where 
\begin{align}\label{Nest}
N_l(Z\cap A \cap a\La){}&=\sum_{z \in a\La \cap (A \ominus a\check{C}_{0,0}^n)} \mathds{1}{\{z+aB_l\subseteq Z, z + aW_l \subseteq \R^d \backslash Z\}}
%&= \sum_{z \in a\La \cap (A \ominus a\check{C}_l)}\mathds{1}_{X\ominus a\check{B}_l\backslash X\oplus a\check{W}_l}(z).
\end{align}
is the number of occurrences of the configuration $(B_l,W_l)$ inside $A$. This is weighted by the weight $w_l^{(q)}\in \R$.
Moreover, $A\ominus \check{C}_{0,0}^n=\{x\in \R^d \mid x + C_{0,0}^n \subseteq A\}$, and
$N(A)$ denotes the cardinality of $a\La \cap (A \ominus a\check{C}_{0,0}^n)$, i.e.\ the total number of configurations in $A$.

Recall that in the definition of specific intrinsic volumes \eqref{specdef} and \eqref{specdef2} we remove the contribution to ${V}_q(Z\cap A)$ coming from the boundary of the observation window. For this reason, we count in \eqref{Nest} only configurations lying completely in the interior of $A$. 

The mean value of \eqref{Nest} is
\begin{align*}
EN_l(Z\cap A \cap a\La) {}&= \sum_{z \in a\La \cap (A \ominus a\check{C}_{0,0}^n) } P({z+ aB_l\subseteq Z , z+ aW_l \subseteq \R^d \backslash Z })\\
 &=N(A) P({aB_l\subseteq Z , aW_l \subseteq \R^d \backslash Z }),
\end{align*}
%If the underlying Poisson process is stationary, we obtain:
%\begin{align*}
%EN_l(Z\cap A \cap a\La) =N_l(A) P({aB_l\subseteq Z , aW_l \subseteq \R^d \backslash Z }) 
%\end{align*}
and hence
\begin{equation}\label{meanHM}
E\hat{V}_{q}(Z\cap A)% &=  a^q \sum_{l=0}^{2^{n^d}-1} \sum_{z \in a\La \cap (A \ominus a\check{C}_{0,0}^n )} w_l P({aB_l\subseteq Z, aW_l \subseteq \R^d \backslash Z })N(A)^{-1}\\
 =a^{q-d}\sum_{l=0}^{2^{n^d}-1} w_l^{(q)}P({aB_l\subseteq Z,  aW_l \subseteq \R^d \backslash Z }).
\end{equation}

\subsection{Asymptotic formulas for the mean digital estimators}
The formulas of Section \ref{hitmiss} yield asymptotic expressions for \eqref{meanHM} as the grid width $a$ goes to zero. First we consider estimators for the specific volume $\hat{V}_d(Z\cap A)$.

\begin{thm}
For any Boolean model,
\begin{equation*}
E\hat {V}_d(Z\cap A) = w_0^{(d)} e^{-\gamma E V_d(K)} + w_{2^{n^d}-1}^{(d)}(1-e^{-\gamma E V_d(K)}) + O(a).
\end{equation*}
In particular, $\hat {V}_d$ is asymptotically unbiased iff $w_0^{(d)}=0$ and $w_{2^{n^d}-1}^{(d)}=1$.
\end{thm}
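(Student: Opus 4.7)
The plan is to apply the formula \eqref{meanHM}, which for $q=d$ reduces to
\begin{equation*}
E\hat{V}_d(Z\cap A)=\sum_{l=0}^{2^{n^d}-1}w_l^{(d)}P(aB_l\subseteq Z,aW_l\subseteq \R^d\setminus Z),
\end{equation*}
and to expand each hit-and-miss probability up to $O(a)$. I would split the $2^{n^d}$ configurations into three groups: the ``all white'' pair $l=0$ with $(B_0,W_0)=(\emptyset,C_{0,0}^n)$, the ``all black'' pair $l=2^{n^d}-1$ with $(B_l,W_l)=(C_{0,0}^n,\emptyset)$, and the remaining ``mixed'' configurations in which both $B_l$ and $W_l$ are non-empty.

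For $l=0$, formula \eqref{exp} gives $P(aW_0\subseteq \R^d\setminus Z)=e^{-\gamma EV_d(K\oplus a\check{W}_0)}$. For $l=2^{n^d}-1$, the same inclusion-exclusion trick used in \eqref{EI1} (applied with the roles of $Z$ and $Z^c$ interchanged) combined with \eqref{exp} gives
\begin{equation*}
P(aB_l\subseteq Z)=\sum_{S\subseteq B_l}(-1)^{|S|}e^{-\gamma EV_d(K\oplus a\check{S})},
\end{equation*}
where the $S=\emptyset$ term contributes $1$. In both cases I would Taylor-expand the exponential using the two-sided bound
\begin{equation*}
0\leq EV_d(K\oplus a\check{C})-EV_d(K)\leq EV_d(K\oplus a\conv(\check{C}))-EV_d(K)=O(a)
\end{equation*}
valid for any finite $C\subseteq\R^d$; the right-hand equality follows from the mixed-volume expansion \eqref{Defn:mixedvolumes}, whose coefficients have finite expectation by the Steiner formula \eqref{steiner} together with the integrability condition \eqref{intcond}. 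Combined with $\sum_{\emptyset\neq S\subseteq B_l}(-1)^{|S|}=-1$, this yields the contributions $w_0^{(d)}e^{-\gamma EV_d(K)}+O(a)$ and $w_{2^{n^d}-1}^{(d)}(1-e^{-\gamma EV_d(K)})+O(a)$ respectively.

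For any mixed configuration, I would pick arbitrary $b_0\in B_l$ and $w_0\in W_l$ and dominate
\begin{equation*}
P(aB_l\subseteq Z,aW_l\subseteq\R^d\setminus Z)\leq P(ab_0\in Z,aw_0\notin Z)=e^{-\gamma EV_d(K)}-e^{-\gamma EV_d(K\oplus a\{-b_0,-w_0\})},
\end{equation*}
which is itself $O(a)$ by the same expansion. Since there are only finitely many such $l$, their total contribution to $E\hat{V}_d(Z\cap A)$ is $O(a)$, establishing the displayed formula. For the ``in particular'' clause, I would view the leading term $w_0^{(d)}e^{-\gamma EV_d(K)}+w_{2^{n^d}-1}^{(d)}(1-e^{-\gamma EV_d(K)})$ as an affine function of $p:=e^{-\gamma EV_d(K)}$ and equate it with $\altoverline{V}_d(Z)=1-p$, cf.\ \eqref{specific}. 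Since $p$ ranges over $(0,1)$ as $\gamma$ and $\Q$ vary, matching coefficients forces $w_{2^{n^d}-1}^{(d)}=1$ and $w_0^{(d)}=0$.

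The only mildly delicate point is the $O(a)$ bound on $EV_d(K\oplus a\check{C})-EV_d(K)$ without assuming Condition~\ref{cond1} or~\ref{cond2}; this is handled by monotonicity (lower bound) and the polynomial convex-hull expansion \eqref{Defn:mixedvolumes} (upper bound), whose first-order coefficient $d\gamma EV(\conv(\check{C})[1],K[d-1])$ is finite solely by virtue of \eqref{intcond}.
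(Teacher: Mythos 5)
Your proposal is correct and follows essentially the same route as the paper, which simply applies the zeroth-order parts of Propositions \ref{pol} and \ref{hBW} to \eqref{meanHM}. If anything, your argument is slightly more careful: since the theorem is claimed ``for any Boolean model'' while those propositions are stated under Condition \ref{cond1}, your direct derivation of the $O(a)$ bound on $EV_d(K\oplus a\check{C})-EV_d(K)$ from monotonicity, \eqref{Defn:mixedvolumes} and the integrability condition \eqref{intcond} alone, together with the explicit quantifier in the ``in particular'' clause (unbiasedness for all models, so that $e^{-\gamma EV_d(K)}$ ranges over $(0,1)$), fills in details the paper leaves implicit.
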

\begin{proof}
The result follows from an application of Proposition \ref{pol} and Proposition \ref{hBW}
to \eqref{meanHM}.
\end{proof}

\begin{rem}In fact, it is well known that the estimator based on $1\times \dotsm \times 1$ configurations with $w_0^{(d)}=0$ and $w_{1}^{(d)}=1$ is unbiased, even for fixed $a$. This is the natural estimator given by counting lattice points in $Z\cap A$. Hence we will not discuss volume estimation further.
\end{rem}

Next we consider surface estimators.
\begin{thm} \label{mean1st}
For any stationary Boolean model satisfying Condition \ref{cond1},
\begin{equation*}
\lim_{a\to 0 } E\hat{V}_{d-1}(Z\cap A) 
\end{equation*}
exists  if and only if $w_0^{(d-1)}=w^{(d-1)}_{2^{n^d}-1}=0$. 

In this case, 
\begin{equation*}
 E\hat{V}_{d-1}(Z\cap A) = \gamma e^{-\gamma EV_d(K)} \sum_{l=1}^{2^{n^d}-2} w_l^{(d-1)} E\int_{S^{d-1}} (-h(B_l\oplus \check{W}_l,u))^+ S_{d-1} (K,du) + O(a).
\end{equation*}
 
 If Condition \ref{cond2} is satisfied, 
 \begin{align*}
\MoveEqLeft E\hat{V}_{d-1}(Z\cap A) -  \lim_{a\to 0} E\hat{V}_{d-1}(Z\cap A)=  ae^{-\gamma EV_d(K)} \sum_{l=1}^{2^{n^d}-2} w_l^{(d-1)} \bigg(\gamma E Q(K,B_l,W_l)\\
{}&+ \frac{\gamma^2}{2}\sum_{S\subseteq B_l}(-1)^{|S|} \bigg(E\int_{S^{d-1}} h(\check{S}\cup \check{W}_l,u)  S_{d-1}(K,du)\bigg)^2 \bigg)+ O(a^2).
 \end{align*}
\end{thm}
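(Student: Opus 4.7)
The plan is to substitute the asymptotic expansions of Propositions~\ref{pol} and~\ref{hBW} into the representation~\eqref{meanHM} for $E\hat V_{d-1}(Z\cap A)$ and then to separate the two degenerate configurations $l=0$ (purely white, $B_0=\emptyset$) and $l=2^{n^d}-1$ (purely black, $W_{2^{n^d}-1}=\emptyset$) from the $2^{n^d}-2$ mixed configurations, for which both $B_l$ and $W_l$ are non-empty. Since the prefactor in~\eqref{loces} is $a^{q-d}=a^{-1}$, the $a\to 0$ behaviour is controlled by whether each hit-and-miss probability decays linearly in $a$ or tends to a nonzero limit.

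First I would handle the degenerate terms. Equation~\eqref{exp} (or Proposition~\ref{pol}) gives $P(aW_0\subseteq\R^d\setminus Z)=e^{-\gamma EV_d(K)}+O(a)$, and the second formula in Proposition~\ref{hBW} gives $P(aB_{2^{n^d}-1}\subseteq Z)=1-e^{-\gamma EV_d(K)}+O(a)$. After multiplying by $a^{-1}$, their combined contribution to $E\hat V_{d-1}$ is
\begin{equation*}
a^{-1}\bigl(w_0^{(d-1)}e^{-\gamma EV_d(K)}+w_{2^{n^d}-1}^{(d-1)}(1-e^{-\gamma EV_d(K)})\bigr)+O(1).
\end{equation*}
Since $e^{-\gamma EV_d(K)}$ ranges over all of $(0,1)$ as the Boolean model varies, requiring the limit to exist for every such model forces the bracket to vanish identically, hence $w_0^{(d-1)}=w_{2^{n^d}-1}^{(d-1)}=0$; the converse then follows from the boundedness of the mixed contributions established next.

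For the mixed configurations, the first formula in Proposition~\ref{hBW} gives directly
\begin{equation*}
a^{-1}P(aB_l\subseteq Z,aW_l\subseteq\R^d\setminus Z)=\gamma e^{-\gamma EV_d(K)}\,E\!\int_{S^{d-1}}(-h(B_l\oplus\check W_l,u))^+ S_{d-1}(K,du)+O(a),
\end{equation*}
and summing with the weights $w_l^{(d-1)}$ over $l\in\{1,\dots,2^{n^d}-2\}$ yields the first-order claim. For the second-order statement under Condition~\ref{cond2} I would extend the Taylor expansion of each mixed probability to order $a^2$ by invoking the $O(a^3)$-version of Proposition~\ref{pol} together with the inclusion-exclusion identity~\eqref{EI1} used inside the proof of Proposition~\ref{hBW}. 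The $a^2$-coefficient can then be read off as one half of the second right derivative in equation~\eqref{secondderiv} of Remark~\ref{probo2}, so dividing by $a$ and subtracting the limit produces the linear-in-$a$ correction
\begin{equation*}
ae^{-\gamma EV_d(K)}\Bigl(\gamma EQ(K,B_l,W_l)+\tfrac{\gamma^2}{2}\sum_{S\subseteq B_l}(-1)^{|S|}\Bigl(E\!\int_{S^{d-1}}h(\check S\cup\check W_l,u)S_{d-1}(K,du)\Bigr)^{\!2}\Bigr)
\end{equation*}
for each mixed configuration, and summing with $w_l^{(d-1)}$ gives the claimed formula with remainder $O(a^2)$.

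The main obstacle is more bookkeeping than conceptual: I must verify that the $a^{-1}$ prefactor interacts correctly with the $O(a^2)$ and $O(a^3)$ remainders coming from Propositions~\ref{pol} and~\ref{hBW}, and that the finiteness of the configuration set allows the individual errors to be absorbed into a single $O(a)$ or $O(a^2)$ term. A subtle point worth flagging is that the ``only if'' direction in the first part is asserted uniformly across Boolean models, which is precisely what forces each boundary weight to vanish individually rather than just a specific linear combination of the two.
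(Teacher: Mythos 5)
Your proposal is correct and follows essentially the same route as the paper, whose proof simply consists of applying Propositions~\ref{pol} and~\ref{hBW} (and, for the second-order part, Remark~\ref{probo2}) to the representation~\eqref{meanHM}; you have merely filled in the bookkeeping the authors omit, including the correct separation of the two degenerate configurations. Your observation that the ``only if'' direction requires varying the Boolean model (since for a fixed model only the single linear combination $w_0^{(d-1)}e^{-\gamma EV_d(K)}+w_{2^{n^d}-1}^{(d-1)}(1-e^{-\gamma EV_d(K)})$ need vanish) is a legitimate and worthwhile clarification of how the theorem's quantifier must be read.
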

\begin{proof}
Under Condition \ref{cond1} the result follows by applying Proposition \ref{pol} and Proposition \ref{hBW} to \eqref{meanHM}. Under Condition \ref{cond2} we use additionally Remark \ref{probo2}.
\end{proof}
Finally we consider estimators for the integrated mean curvature.
\begin{thm}\label{mean2nd}
For any stationary Boolean model satisfying Condition \ref{cond2},
\begin{equation*}
\lim_{a\to 0 } E\hat{V}_{d-2}(Z\cap A) 
\end{equation*}
exists  if and only if $w_0^{(d-2)}=w^{(d-2)}_{2^{n^d}-1}=0$ and  
\begin{equation}\label{cond3}
\sum_{l=1}^{2^{n^d}-2} w_l^{(d-2)}E\int_{S^{d-1}} (-h(B_l\oplus \check{W}_l,u))^+ S_{d-1}(K,du) =0.
\end{equation}
 
In this case,
 \begin{align*}
 &E \hat{V}_{d-2} (Z\cap A)= e^{-\gamma EV_d(K)} \sum_{l=1}^{2^{n^d}-2} w_l^{(d-2)} \bigg(\gamma EQ(K,B_l,W_l)\\
&+ \frac{\gamma^2}{2}\sum_{S\subseteq B_l} (-1)^{|S|}\bigg(E\int_{S^{d-1}} h(\check{S}\cup \check{W}_l,u)  dS_{d-1}(K,du)\bigg)^2\bigg) + O(a).
 \end{align*}
\end{thm}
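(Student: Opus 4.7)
My approach is to expand each hit-and-miss probability $P(aB_l\subseteq Z,\, aW_l\subseteq \R^d\setminus Z)$ to order $a^2$ as $a\to 0$, substitute into \eqref{meanHM}, and then read off the algebraic conditions on the weights that cancel the singular prefactors $a^{-2}$ and $a^{-1}$. The index set $\{0,\ldots,2^{n^d}-1\}$ splits into three types. For the all-white configuration $l=0$, formula \eqref{exp} gives $P(aW_0\subseteq\R^d\setminus Z)=e^{-\gamma EV_d(K)}+O(a)$; for the all-black configuration $l=2^{n^d}-1$, the second formula of Proposition \ref{hBW} gives $P(aB_l\subseteq Z)=1-e^{-\gamma EV_d(K)}+O(a)$. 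For the remaining $l\in\{1,\ldots,2^{n^d}-2\}$, both $B_l$ and $W_l$ are non-empty, and Proposition \ref{hBW} supplies the first-order coefficient
$$\alpha_l = \gamma e^{-\gamma EV_d(K)} E\!\int_{S^{d-1}}(-h(B_l\oplus \check W_l,u))^+\, S_{d-1}(K,du),$$
while Remark \ref{probo2} (applied through the inclusion--exclusion identity \eqref{EI1} to the second-order expansion of Proposition \ref{pol} under Condition \ref{cond2}) gives the $a^2$-coefficient $\beta_l=\tfrac12\frac{d^2}{da_+^2}P\big|_{a=0}$ together with an $O(a^3)$ remainder that is uniform in $l$.

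Plugging these three expansions into \eqref{meanHM} yields
\begin{align*}
E\hat V_{d-2}(Z\cap A) = {}& a^{-2}\bigl(w_0^{(d-2)}e^{-\gamma EV_d(K)} + w_{2^{n^d}-1}^{(d-2)}(1-e^{-\gamma EV_d(K)})\bigr)\\
& + a^{-1}\sum_{l=1}^{2^{n^d}-2} w_l^{(d-2)}\alpha_l \;+\; \sum_{l=1}^{2^{n^d}-2} w_l^{(d-2)}\beta_l \;+\; O(a).
\end{align*}
Sufficiency of the stated conditions is immediate from this display, and the limit equals $\sum_l w_l^{(d-2)}\beta_l$, which reproduces the claimed integral expression once $\beta_l$ is written out using \eqref{secondderiv}. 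For necessity, the hypothesis quantifies over all Boolean models satisfying Condition \ref{cond2}; allowing $\gamma$ and $\Q$ to vary so that $e^{-\gamma EV_d(K)}$ ranges over $(0,1)$ forces the $a^{-2}$ coefficient to vanish identically as a linear function of $e^{-\gamma EV_d(K)}$, giving $w_0^{(d-2)}=w_{2^{n^d}-1}^{(d-2)}=0$; the $a^{-1}$ obstruction is then precisely the left-hand side of \eqref{cond3}.

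The main technical issue is the uniformity of the $O(a^3)$ remainder across the $2^{n^d}-2$ mixed configurations, so that multiplication by $a^{-2}$ leaves only an $O(a)$ error; this is ensured because the constants in Lemma \ref{convbound} and Proposition \ref{pol} depend only on the fixed finite set $C_{0,0}^n$ that contains every $S\cup W_l$ entering the inclusion--exclusion sum. A smaller but real conceptual subtlety is the splitting of the single scalar identity $w_0^{(d-2)}e^{-\gamma EV_d(K)} + w_{2^{n^d}-1}^{(d-2)}(1-e^{-\gamma EV_d(K)})=0$ into two independent constraints on the corner weights; this requires genuinely varying the Boolean model rather than fixing it, and is what makes the universal quantifier in the hypothesis essential.
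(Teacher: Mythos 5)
Your proposal is correct and follows essentially the same route as the paper, whose proof consists precisely of substituting the expansions from Proposition \ref{pol}, Proposition \ref{hBW} and Remark \ref{probo2} into \eqref{meanHM} and matching powers of $a$. Your additional care on the ``only if'' direction --- varying $\gamma$ and $\Q$ so that $e^{-\gamma EV_d(K)}$ sweeps $(0,1)$ in order to split the single $a^{-2}$-obstruction into the two separate constraints $w_0^{(d-2)}=w_{2^{n^d}-1}^{(d-2)}=0$ --- is a genuine refinement of a point the paper leaves implicit, and the uniformity of the remainders is immediate since only finitely many configurations occur.
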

\begin{proof}
The statement is obtained by applying Proposition \ref{pol}, Proposition \ref{hBW} and  Remark \ref{probo2} to \eqref{meanHM}. 
\end{proof}
We obtain the following corollary.

\begin{cor}
There exists no local estimator based on $n\times \dotsm \times n$ configurations for $\altoverline{V}_{d-1}(Z)$ if $d\geq 2$ or for $\altoverline{V}_{d-2}(Z)$   if $d\geq 3$ such  that it is asymptotically unbiased for all stationary Boolean models satisfying Condition \ref{cond1} or \ref{cond2}, respectively.
\end{cor}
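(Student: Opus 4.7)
The plan is to treat each non-existence claim by assuming an asymptotically unbiased estimator exists, applying Theorem~\ref{mean1st} (resp.~\ref{mean2nd}) to match the asymptotic mean of $\hat V_{d-1}$ (resp.~$\hat V_{d-2}$) to the true specific intrinsic volume from \eqref{specific}, cancelling the common factor $e^{-\gamma EV_d(K)}$, and reducing to a pointwise algebraic identity that is then refuted in the stated dimension range.

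For $\altoverline V_{d-1}$ with $d\geq 2$, cancellation of $\gamma e^{-\gamma EV_d(K)}$ followed by specialisation to deterministic grains $\Q=\delta_K$ yields
\[
\sum_l w_l^{(d-1)}\int_{S^{d-1}}(-h(B_l\oplus \check W_l,u))^+\,S_{d-1}(K,du)\;=\;\tfrac{1}{2}S_{d-1}(K,S^{d-1})
\]
for every $\eps$-regular convex body $K$. By Minkowski's existence theorem the measures $S_{d-1}(K,\cdot)$ generate (via weak limits) the full cone of centered positive Borel measures on $S^{d-1}$, so the bracketed difference coincides on $S^{d-1}$ with some linear functional $\langle a,\cdot\rangle$. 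Extending both sides positively homogeneously of degree $1$ to $\R^d$ gives
\[
\sum_l w_l^{(d-1)}(-h(B_l\oplus \check W_l,u))^+\;=\;\tfrac{1}{2}\|u\|+\langle a,u\rangle.
\]
The left-hand side is a finite sum of positive parts of linear forms and hence piecewise linear on $\R^d$, whereas for $d\geq 2$ the norm $\|u\|$ is smooth and strictly convex on $\R^d\setminus\{0\}$; this contradiction settles the first half of the claim.

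For $\altoverline V_{d-2}$ with $d\geq 3$, the same scheme is applied one order higher. Fix $\Q$ satisfying Condition~\ref{cond2}, equate Theorem~\ref{mean2nd} with \eqref{specific}, cancel $e^{-\gamma EV_d(K)}$, and compare coefficients in the resulting polynomial identity in $\gamma$ (allowed since $\gamma>0$ is free). The $\gamma$-coefficient, specialised to $\Q=\delta_K$, reads $\sum_l w_l^{(d-2)}Q(K,B_l,W_l)=V_{d-2}(K)$ for every $\eps$-regular $K$. Combining Proposition~\ref{propmixedvols} (whose boundary indicator $\mathds{1}_{\{h=0\}}$ contributes a $\Ha^{d-1}$-null set on $\partial K$, as proved in the excerpt) with $V_{d-2}(K)=\tfrac{1}{2\pi}\int_{\partial K}\tr\II_x\,d\Ha^{d-1}$, and varying $K$ through $\eps$-regular convex bodies so that the pushforward of $\Ha^{d-1}|_{\partial K}$ to the $(u,\II)$-space can be concentrated near any admissible pair (by standard deformations of ellipsoids), this reduces to the pointwise identity
\[
\sum_l w_l^{(d-2)}\bigl[(h(B_l,u)^2-h(\check W_l,u)^2)\tr\II-\II^+_u(B_l)+\II^-_u(W_l)\bigr]\mathds{1}_{\{h(B_l\oplus\check W_l,u)<0\}}\;=\;\tfrac{1}{\pi}\tr\II
\]
for every $u\in S^{d-1}$ and every positive semidefinite $\II$ on $u^\perp$.

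The hard part is to derive a contradiction from this pointwise identity when $d\geq 3$. The plan is to decompose $\II=\lambda\,\mathrm{Id}+\II_0$ on $u^\perp$: matching the coefficient of $\lambda$ forces a specific scalar function $G(u)$ (the $\tr\II$-coefficient on the left) to equal the nonzero constant $(d-1)/\pi$, while matching the coefficient of the traceless part $\II_0$ forces the symmetric matrix
\[
N(u):=\sum_l w_l^{(d-2)}\bigl[(\pi_u p_l^*)(\pi_u p_l^*)^T-(\pi_u b_l^*)(\pi_u b_l^*)^T\bigr]\mathds{1}_{\{h(B_l\oplus \check W_l,u)<0\}}
\]
(with $b_l^*\in F(B_l,u)$ and $p_l^*\in F(\check W_l,u)$) to be a scalar multiple of $\mathrm{Id}|_{u^\perp}$ at every $u$. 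This isotropy requirement is vacuous when $\dim u^\perp=1$ but is genuine for $d\geq 3$: each summand has rank at most two, and the only way a rank-at-most-two symmetric matrix on a space of dimension $\geq 2$ can be a nonzero multiple of the identity is via degenerate coincidences between the active vertex pairs $(\pi_u b_l^*,\pi_u p_l^*)$, which fail for generic lattice configurations. Tracking the jumps of $N$ across the polyhedral walls $\{h(B_l\oplus \check W_l,u)=0\}$, where only the discontinuity of the indicator matters, then forces enough of the weights $w_l^{(d-2)}$ to vanish that the nonzero constant $(d-1)/\pi$ required of $G$ cannot be produced. Carrying out this combinatorial bookkeeping at the level of the lattice configurations is the technical core of the proof.
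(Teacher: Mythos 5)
Your reduction coincides with the paper's up to a point: specialise to a deterministic grain $K_0$, cancel the exponential, and (for $\altoverline{V}_{d-2}$) compare coefficients of $\gamma$; these steps match the paper exactly. For $\altoverline{V}_{d-1}$ you then diverge. Where the paper observes that the resulting condition is precisely asymptotic unbiasedness of the design-based estimator $\hat V_{d-1}(K_0)$ and invokes the convex counterexamples of \cite[Theorem 1.4]{am3}, you prove the design-based impossibility from scratch: by Minkowski's existence theorem the integrand minus $\tfrac12$ must restrict to a linear function on $S^{d-1}$, and a finite weighted sum of positive parts of $-h(B_l\oplus\check W_l,\cdot)$ is piecewise linear on $\R^d$ while $\tfrac12\|u\|$ is not for $d\geq 2$. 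That argument is correct and self-contained, and is a genuinely different (and arguably more transparent) route than the paper's citation.

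The $\altoverline{V}_{d-2}$ half, however, has a real gap. After extracting the $\gamma$-coefficient identity $\sum_l w_l^{(d-2)}Q(K,B_l,W_l)=V_{d-2}(K)$, you attempt to refute it directly via (i) a localisation producing a pointwise identity in $(u,\II)$ and (ii) an isotropy/jump analysis of the matrix $N(u)$. Step (i) is already delicate: for a convex body the normal directions must sweep out all of $S^{d-1}$, so the pushforward of $\Ha^{d-1}|_{\partial K}$ cannot simply be concentrated near one pair $(u,\II)$; one needs a differencing construction between two bodies agreeing outside a small normal neighbourhood, which "standard deformations of ellipsoids" does not supply as stated. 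More seriously, step (ii) is only a plan: you assert that the requirement $N(u)\in\R\cdot\mathrm{Id}|_{u^\perp}$ together with the wall-crossing jumps "forces enough of the weights to vanish", but you explicitly defer the combinatorial bookkeeping that would establish this, and it is not evident a priori that no weight vector satisfies all the constraints simultaneously. This is exactly the point where the paper takes a shortcut: the $\gamma$-coefficient identity is the statement $\lim_{a\to 0}E\hat V_{d-2}(K_0)=V_{d-2}(K_0)$ for the design-based estimator, which is refuted by the explicit convex counterexamples of \cite[Theorem 1.4]{am3}. As written, your second half is a programme rather than a proof; either carry out the deferred analysis or fall back on the known design-based negative result as the paper does.
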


\begin{proof}
We consider a Boolean model with a fixed grain equal to some convex body $K_0$.
By $\hat{V}_{d-1}(K_0)$ we mean the digital estimator of $V_{d-1}(K_0)$ in the designed based setting (i.e. based on a stationary random lattice) with the same weights as in the definition of $\hat{V}_{d-1}(Z\cap A)$.
Then Theorem \ref{mean1st} and \cite[Theorem 4.1]{am3} (or originally \cite[Theorem 5]{rataj}) imply
\[\lim\limits_{a\to 0} E \hat{V}_{d-1}(Z\cap A)= \gamma e^{\gamma E V_d(K_0)} \lim\limits_{a\to 0} E \hat{V}_{d-1}(K_0). \]
By \eqref{specific} the estimator $ \hat{V}_{d-1}(Z\cap A)$ is asymptotically unbiased if $\lim\limits_{a\to 0} E \hat{V}_{d-1}(K_0)= V_{d-1}(K_0).$ This is not the case if we choose $K_0$ as one of the counterexamples in \cite[Theorem 1.4]{am3}. Note that the counterexamples are chosen convex in the proof. 

In the same way denote by $\hat{V}_{d-2}(K_0)$ the digital estimator of $V_{d-2}(K_0)$ in the design based setting with the same weights as in the definition of $\hat{V}_{d-2}(Z\cap A)$.  
Then, by \cite[Theorem 4.2]{am3} (originally shown in \cite{am2}) it holds  
\begin{align*}
&\lim\limits_{a\to 0} E \hat{V}_{d-2}(Z\cap A) = e^{-\gamma V_d(K_0)}  \bigg(\gamma 
\lim\limits_{a\to 0} E \hat{V}_{d-2}(K_0)\\
&+ \frac{\gamma^2}{2}\sum_{l=1}^{2^{n^d}-2} w_l^{(d-2)} \sum_{S\subseteq B_l} (-1)^{|S|}\bigg(\int_{S^{d-1}} h(\check{S}\cup \check{W}_l,u)  dS_{d-1}(K_0,du)\bigg)^2\bigg)
\end{align*}
Comparing the coefficient of $\gamma$ with the one in the corresponding formula in \eqref{specific} we obtain that the estimator $\hat{V}_{d-2}(Z\cap A)$ can only be asymptotically unbiased if $\lim\limits_{a\to 0} E\hat{V}_{d-2}(K_0)=V_{d-2}(K_0)$. Again this is not the case if we choose $K_0$ as one of the counterexamples in \cite[Theorem 1.4]{am3}. This yields the assertion.
\end{proof}

\section{Optimal estimators\\ for isotropic Boolean models}\label{isotropy}
We now specialise to the case where $Z$ is stationary and the grain distribution $\Q$ is rotation invariant. 

\begin{thm}\label{mean1stIsotropic}
Let $Z$ be a stationary, isotropic Boolean model.
If Condition \ref{cond1}  is satisfied and $w_0^{(d-1)}=w_{2^{n^d}-1}^{(d-1)}=0$, then
\begin{align*}
E \hat{V}_{d-1}(Z\cap A)&=\gamma e^{-\gamma EV_d(K)}\sum\limits_{l=1}^{2^{n^d}-2}
w_l^{(d-1)}c_1(B_l,W_l) EV_{d-1}(K)+O(a),
\end{align*}
where $c_1(B_l,W_l)$ is a constant.
If Condition \ref{cond2}, $w_0^{(d-2)}=w_{2^{n^d}-1}^{(d-2)}=0$ and \eqref{cond3}
are satisfied, then
\begin{align*}
E\hat{V}_{d-2}(Z\cap A)&= e^{-\gamma E V_d(K)}
\sum\limits_{l=1}^{2^{n^d}-2}
w_l^{(d-2)}\Big(\gamma c_2(B_l,W_l)E V_{d-2}(K)\\
&\quad+\frac{\gamma^2}{2}c_3(B_l,W_l)\left(  E V_{d-1}(K)\right)^2\Big)+O(a),
\end{align*}
where $c_2(B_l,W_l)$ and $c_3(B_l,W_l)$ are constants.
\end{thm}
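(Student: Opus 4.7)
The plan is to exploit the rotation invariance of $\Q$ to integrate out the rotational freedom in $K$, reducing all surface integrals that appear in Theorems~\ref{mean1st} and~\ref{mean2nd} to scalar multiples of $EV_{d-1}(K)$ or $EV_{d-2}(K)$.

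For the surface area estimator I start from Theorem~\ref{mean1st}. Because $\Q$ is rotation invariant, $ES_{d-1}(K,\cdot)$ is a rotation invariant finite measure on $S^{d-1}$, hence a scalar multiple of the uniform spherical Lebesgue measure $\sigma$. Using $\sigma(S^{d-1})=d\kappa_d$ and $ES_{d-1}(K,S^{d-1})=2EV_{d-1}(K)$, one obtains $ES_{d-1}(K,\cdot)=\frac{2EV_{d-1}(K)}{d\kappa_d}\sigma$, and substitution yields the first claim with $c_1(B_l,W_l)=\frac{2}{d\kappa_d}\int_{S^{d-1}}(-h(B_l\oplus\check W_l,u))^+\,d\sigma(u)$.

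For the integrated mean curvature estimator I apply Theorem~\ref{mean2nd}. The second summand simplifies by the same isotropy argument on each inner integral: $E\int h(\check S\cup\check W_l,u)\,S_{d-1}(K,du)=\frac{2EV_{d-1}(K)}{d\kappa_d}\int h(\check S\cup\check W_l,u)\,d\sigma(u)$, so squaring and summing produces $c_3(B_l,W_l)(EV_{d-1}(K))^2$ with an explicit constant. The remaining and main task is to show $EQ(K,B,W)=c_2(B,W)EV_{d-2}(K)$. For this I would first verify the transformation rule $Q(\rho K,B,W)=Q(K,\rho^{-1}B,\rho^{-1}W)$ for $\rho\in SO(d)$ by substituting $y=\rho x$ on $\partial(\rho K)$ and using $u^{\rho K}(\rho x)=\rho u^K(x)$ together with the induced identities $h(B,\rho u)=h(\rho^{-1}B,u)$ and $\II^{\pm}_{\rho x}(B)=\II^{\pm}_x(\rho^{-1}B)$. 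Rotation invariance of $\Q$ then yields $EQ(K,B,W)=E\int_{SO(d)}Q(K,\rho B,\rho W)\,d\nu(\rho)$ for Haar measure $\nu$. Exchanging expectation and the rotation average, the right hand side becomes $\frac{1}{2}E\int_{\partial K}\tilde g_{B,W}(u(x),\II_x)\,d\Ha^{d-1}(x)$, where $\tilde g_{B,W}$ is the rotation average of the integrand of $Q$, and the crucial claim reduces to showing $\tilde g_{B,W}(u,\II)=c'(B,W)\tr\II$ for some constant depending only on $B,W$.

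The main obstacle will be verifying this last assertion term by term. The $(h(B,u)^2-h(\check W,u)^2)\tr\II$-piece is already linear in $\tr\II$, and its rotation average is a constant times $\tr\II$. For the $\II^\pm$-pieces I would use that for a.e.\ $v\in S^{d-1}$ the support set $F(B,v)$ is a single vertex $b^*(v)$ of $\conv B$, and decompose Haar on $SO(d)$ as Haar on $S^{d-1}$ (parameterising $v=\rho^{-1}u$) together with Haar on the stabiliser of $u$, isomorphic to $SO(d-1)$ acting on $u^\perp$. A direct computation shows that for $\rho$ with $\rho^{-1}u=v$ fixed, $\pi_u\rho b^*(v)$ is uniformly distributed on the sphere of radius $(|b^*(v)|^2-\langle b^*(v),v\rangle^2)^{1/2}$ in $u^\perp$, so the inner stabiliser-average of $\II(\pi_u\rho b^*(v))$ equals $\frac{1}{d-1}(|b^*(v)|^2-\langle b^*(v),v\rangle^2)\tr\II$ by the standard second-moment identity for uniform unit vectors. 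Averaging over $v$ then gives a constant times $\tr\II$; the $\II^-$-piece is handled identically. The residual $\mathds{1}_{\{h(B\oplus\check W,u)=0\}}$-term vanishes after integrating over $\partial K$ by the argument at the end of Section~\ref{hitmiss}. Combining these yields $\tilde g_{B,W}(u,\II)=c'(B,W)\tr\II$, and the standard identity that $\int_{\partial K}\tr\II_x\,d\Ha^{d-1}(x)$ is a dimensional constant times $V_{d-2}(K)$ finishes the argument.
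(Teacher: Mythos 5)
Your proposal is correct and follows essentially the same route as the paper: isotropy is used to replace $ES_{d-1}(K,\cdot)$ by a multiple of the spherical Lebesgue measure (giving $c_1$ and $c_3$), and the remaining identity $EQ(K,B_l,W_l)=c_2(B_l,W_l)EV_{d-2}(K)$ is obtained by rotation averaging. The only divergence is that the paper disposes of this last identity by citing \cite[Section 5]{am2}, whereas you prove it directly; your argument (the transformation rule for $Q$ under rotations, singleton support sets for a.e.\ direction, the second-moment identity on the stabiliser orbit, and $\int_{\partial K}\tr\II_x\,\Ha^{d-1}(dx)\propto V_{d-2}(K)$) is sound and simply fills in what the paper delegates to the reference.
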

\begin{proof}

Let $l\in\{1,\ldots,2^{n^d}-2\}.$ Then by Tonelli's theorem
\begin{align*}
\MoveEqLeft E \int_{S^{d-1}} (-h(B_l\oplus \check{W}_l,u) )^+S_{d-1}(K,du)\\
{}& = E \int_{SO(d)} \int_{S^{d-1}} (-h(B_l\oplus \check{W}_l,u) )^+S_{d-1}(RK,du)dR\\
& = 2EV_{d-1}(K) (d\kappa_d)^{-1} \int_{S^{d-1}}  (-h(B_l\oplus \check{W}_l,u) )^+\Ha^{d-1}(du)\\
& =c_1(B_l,W_l) EV_{d-1}(K)
\end{align*}
where $c_1(B_l,W_l)$ is a constant.
By Fubini's theorem and \cite[Section 5]{am2} 
\begin{align*}
EQ(K,B_l,W_l)=c_2(B_l,W_l) E V_{d-2}(K)  
\end{align*}
where $c_2(B_l,W_l)$ is a constant.
Similarly,
\begin{align*}
\MoveEqLeft \sum_{S\subseteq B_l  } (-1)^{|S|} \bigg( E\int_{S^{d-1}} h(\check{S} \cup \check{W}_l,u) S_{d-1}(K,du) \bigg)^2 \\
&= 
4\left(EV_{d-1}(K)\right)^2 (d\kappa_d)^{-2}\sum_{S\subseteq B_l  } (-1)^{|S|} \bigg( \int_{S^{d-1}} h(\check{S} \cup \check{W}_l,u) \Ha^{d-1}(du) \bigg)^2  \\
&= c_{3}(B_l,W_l)\left(EV_{d-1}(K)\right)^2
\end{align*}
where $c_{3}(B_l,W_l)$ is a constant.
Inserting this in Theorem \ref{mean1st} and \ref{mean2nd} yields the assertion.
\end{proof}

Comparing Theorem \ref{mean1stIsotropic} with the Miles formulas \eqref{specific} we see that an estimator for $\overline{V}_{d-1}(Z)$ or $\overline{V}_{d-2}(Z)$ is asymptotically unbiased exactly if the weights satisfy a set of linear equations involving the constants $c_{k}(B_l,W_l)$, $k=1,2,3$. In 2D these equations were determined and the full solution was given in \cite{am}. In the following sections, we determine the constants and the corresponding equations in 3D. 
\
\subsection{The 3D situation}
For the remainder of this section we specialise to the situation $d=3$ and to $2\times 2 \times 2$ configurations on a square grid $\Z^3\subseteq \R^3$.

 Let $R$ be a rigid motion. If $RS=S'$ then $P(aS\subseteq \R^3\backslash Z)=P(aS' \subseteq \R^3\backslash Z)$. 
Thus, the isotropy allows us to reduce the number of configurations in the following way.
There are 22 motion equivalence classes of subsets of $C_{0,0}^2$. We denote these by $\eta_j$. Let $\eta_{22}=\{\emptyset \}$ and for $j\neq 22$ let $\eta_j$ be the class with the corresponding set of white points in Figure \ref{eta} as representative.
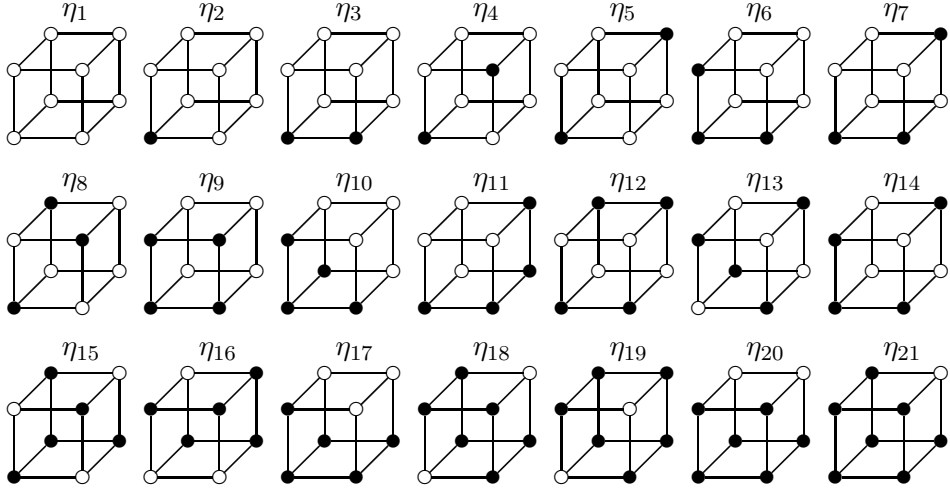
\begin{figure}[h!]
\begin{equation*}
\setlength{\unitlength}{0.09cm}
\begin{picture}(140,65)
\put(7,68){$\eta_{1}$}
\put(27,68){$\eta_{2}$}
\put(47,68){$\eta_{3}$}
\put(67,68){$\eta_{4}$}
\put(87,68){$\eta_{5}$}
\put(107,68){$\eta_{6}$}
\put(127,68){$\eta_{7}$}

\put(7,43){$\eta_8$}
\put(27,43){$\eta_{9}$}
\put(47,43){$\eta_{10}$}
\put(67,43){$\eta_{11}$}
\put(87,43){$\eta_{12}$}
\put(107,43){$\eta_{13}$}
\put(127,43){$\eta_{14}$}

\put(7,18){$\eta_{15}$}
\put(27,18){$\eta_{16}$}
\put(47,18){$\eta_{17}$}
\put(67,18){$\eta_{18}$}
\put(87,18){$\eta_{19}$}
\put(107,18){$\eta_{20}$}
\put(127,18){$\eta_{21}$}

\put(0,25){\circle*{2}}
%%\put(0,0){$\bullet$}
\put(10,25){\circle{2}}
\put(0,35){\circle{2}}
\put(10,35){\circle*{2}}
\put(5.4,30.4){\circle{2}}
\put(15.4,30.4){\circle{2}}
\put(5.4,40.4){\circle*{2}}
\put(15.4,40.4){\circle{2}}
\put(0,26){\line(0,1){8}}
\put(1,25){\line(1,0){8}}
\put(10,26){\line(0,1){8}}
\put(1,35){\line(1,0){8}}

\put(5.4,31.4){\line(0,1){8}}
\put(6.4,30.4){\line(1,0){8}}
\put(15.4,31.4){\line(0,1){8}}
\put(6.4,40.4){\line(1,0){8}}

\put(0.7,25.7){\line(1,1){4}}
\put(10.7,25.7){\line(1,1){4}}
\put(0.7,35.7){\line(1,1){4}}
\put(10.7,35.7){\line(1,1){4}}
\put(20,25){\circle*{2}}
\put(30,25){\circle*{2}}
\put(20,35){\circle*{2}}
\put(30,35){\circle*{2}}
\put(25.4,30.4){\circle{2}}
\put(35.4,30.4){\circle{2}}
\put(25.4,40.4){\circle{2}}
\put(35.4,40.4){\circle{2}}
\put(20,26){\line(0,1){8}}
\put(21,25){\line(1,0){8}}
\put(30,26){\line(0,1){8}}
\put(21,35){\line(1,0){8}}

\put(25.4,31.4){\line(0,1){8}}
\put(26.4,30.4){\line(1,0){8}}
\put(35.4,31.4){\line(0,1){8}}
\put(26.4,40.4){\line(1,0){8}}

\put(20.7,25.7){\line(1,1){4}}
\put(30.7,25.7){\line(1,1){4}}
\put(20.7,35.7){\line(1,1){4}}
\put(30.7,35.7){\line(1,1){4}}

\put(40,25){\circle*{2}}
\put(50,25){\circle*{2}}
\put(40,35){\circle*{2}}
\put(50,35){\circle{2}}
\put(45.4,30.4){\circle*{2}}
\put(55.4,30.4){\circle{2}}
\put(45.4,40.4){\circle{2}}
\put(55.4,40.4){\circle{2}}
\put(40,26){\line(0,1){8}}
\put(41,25){\line(1,0){8}}
\put(50,26){\line(0,1){8}}
\put(41,35){\line(1,0){8}}

\put(45.4,31.4){\line(0,1){8}}
\put(46.4,30.4){\line(1,0){8}}
\put(55.4,31.4){\line(0,1){8}}
\put(46.4,40.4){\line(1,0){8}}

\put(40.7,25.7){\line(1,1){4}}
\put(50.7,25.7){\line(1,1){4}}
\put(40.7,35.7){\line(1,1){4}}
\put(50.7,35.7){\line(1,1){4}}

\put(60,25){\circle*{2}}
\put(70,25){\circle*{2}}
\put(60,35){\circle{2}}
\put(70,35){\circle{2}}
\put(65.4,30.4){\circle{2}}
\put(75.4,30.4){\circle*{2}}
\put(65.4,40.4){\circle{2}}
\put(75.4,40.4){\circle*{2}}
\put(60,26){\line(0,1){8}}
\put(61,25){\line(1,0){8}}
\put(70,26){\line(0,1){8}}
\put(61,35){\line(1,0){8}}

\put(65.4,31.4){\line(0,1){8}}
\put(66.4,30.4){\line(1,0){8}}
\put(75.4,31.4){\line(0,1){8}}
\put(66.4,40.4){\line(1,0){8}}

\put(60.7,25.7){\line(1,1){4}}
\put(70.7,25.7){\line(1,1){4}}
\put(60.7,35.7){\line(1,1){4}}
\put(70.7,35.7){\line(1,1){4}}

\put(80,25){\circle*{2}}
\put(90,25){\circle*{2}}
\put(80,35){\circle{2}}
\put(90,35){\circle{2}}
\put(85.4,30.4){\circle{2}}
\put(95.4,30.4){\circle{2}}
\put(85.4,40.4){\circle*{2}}
\put(95.4,40.4){\circle*{2}}
\put(80,26){\line(0,1){8}}
\put(81,25){\line(1,0){8}}
\put(90,26){\line(0,1){8}}
\put(81,35){\line(1,0){8}}

\put(85.4,31.4){\line(0,1){8}}
\put(86.4,30.4){\line(1,0){8}}
\put(95.4,31.4){\line(0,1){8}}
\put(86.4,40.4){\line(1,0){8}}

\put(80.7,25.7){\line(1,1){4}}
\put(90.7,25.7){\line(1,1){4}}
\put(80.7,35.7){\line(1,1){4}}
\put(90.7,35.7){\line(1,1){4}}

\put(100,25){\circle{2}}
\put(110,25){\circle*{2}}
\put(100,35){\circle*{2}}
\put(110,35){\circle{2}}
\put(105.4,30.4){\circle*{2}}
\put(115.4,30.4){\circle{2}}
\put(105.4,40.4){\circle{2}}
\put(115.4,40.4){\circle*{2}}
\put(100,26){\line(0,1){8}}
\put(101,25){\line(1,0){8}}
\put(110,26){\line(0,1){8}}
\put(101,35){\line(1,0){8}}

\put(105.4,31.4){\line(0,1){8}}
\put(106.4,30.4){\line(1,0){8}}
\put(115.4,31.4){\line(0,1){8}}
\put(106.4,40.4){\line(1,0){8}}

\put(100.7,25.7){\line(1,1){4}}
\put(110.7,25.7){\line(1,1){4}}
\put(100.7,35.7){\line(1,1){4}}
\put(110.7,35.7){\line(1,1){4}}

\put(120,25){\circle*{2}}
\put(130,25){\circle*{2}}
\put(120,35){\circle*{2}}
\put(130,35){\circle{2}}
\put(125.4,30.4){\circle{2}}
\put(135.4,30.4){\circle{2}}
\put(125.4,40.4){\circle{2}}
\put(135.4,40.4){\circle*{2}}
\put(120,26){\line(0,1){8}}
\put(121,25){\line(1,0){8}}
\put(130,26){\line(0,1){8}}
\put(121,35){\line(1,0){8}}

\put(125.4,31.4){\line(0,1){8}}
\put(126.4,30.4){\line(1,0){8}}
\put(135.4,31.4){\line(0,1){8}}
\put(126.4,40.4){\line(1,0){8}}

\put(120.7,25.7){\line(1,1){4}}
\put(130.7,25.7){\line(1,1){4}}
\put(120.7,35.7){\line(1,1){4}}
\put(130.7,35.7){\line(1,1){4}}

\put(0,0){\circle*{2}}
\put(10,0){\circle{2}}
\put(0,10){\circle{2}}
\put(10,10){\circle*{2}}
\put(5.4,5.4){\circle*{2}}
\put(15.4,5.4){\circle*{2}}
\put(5.4,15.4){\circle*{2}}
\put(15.4,15.4){\circle{2}}
\put(0,1){\line(0,1){8}}
\put(1,0){\line(1,0){8}}
\put(10,1){\line(0,1){8}}
\put(1,10){\line(1,0){8}}

\put(5.4,6.4){\line(0,1){8}}
\put(6.4,5.4){\line(1,0){8}}
\put(15.4,6.4){\line(0,1){8}}
\put(6.4,15.4){\line(1,0){8}}

\put(0.7,0.7){\line(1,1){4}}
\put(10.7,0.7){\line(1,1){4}}
\put(0.7,10.7){\line(1,1){4}}
\put(10.7,10.7){\line(1,1){4}}

\put(20,0){\circle{2}}
\put(30,0){\circle{2}}
\put(20,10){\circle*{2}}
\put(30,10){\circle*{2}}
\put(25.4,5.4){\circle*{2}}
\put(35.4,5.4){\circle*{2}}
\put(25.4,15.4){\circle{2}}
\put(35.4,15.4){\circle*{2}}
\put(20,1){\line(0,1){8}}
\put(21,0){\line(1,0){8}}
\put(30,1){\line(0,1){8}}
\put(21,10){\line(1,0){8}}

\put(25.4,6.4){\line(0,1){8}}
\put(26.4,5.4){\line(1,0){8}}
\put(35.4,6.4){\line(0,1){8}}
\put(26.4,15.4){\line(1,0){8}}

\put(20.7,0.7){\line(1,1){4}}
\put(30.7,0.7){\line(1,1){4}}
\put(20.7,10.7){\line(1,1){4}}
\put(30.7,10.7){\line(1,1){4}}

\put(40,0){\circle*{2}}
\put(50,0){\circle*{2}}
\put(40,10){\circle*{2}}
\put(50,10){\circle{2}}
\put(45.4,5.4){\circle*{2}}
\put(55.4,5.4){\circle*{2}}
\put(45.4,15.4){\circle{2}}
\put(55.4,15.4){\circle{2}}
\put(40,1){\line(0,1){8}}
\put(41,0){\line(1,0){8}}
\put(50,1){\line(0,1){8}}
\put(41,10){\line(1,0){8}}

\put(45.4,6.4){\line(0,1){8}}
\put(46.4,5.4){\line(1,0){8}}
\put(55.4,6.4){\line(0,1){8}}
\put(46.4,15.4){\line(1,0){8}}

\put(40.7,0.7){\line(1,1){4}}
\put(50.7,0.7){\line(1,1){4}}
\put(40.7,10.7){\line(1,1){4}}
\put(50.7,10.7){\line(1,1){4}}

\put(60,0){\circle{2}}
\put(70,0){\circle*{2}}
\put(60,10){\circle*{2}}
\put(70,10){\circle*{2}}
\put(65.4,5.4){\circle*{2}}
\put(75.4,5.4){\circle*{2}}
\put(65.4,15.4){\circle*{2}}
\put(75.4,15.4){\circle{2}}
\put(60,1){\line(0,1){8}}
\put(61,0){\line(1,0){8}}
\put(70,1){\line(0,1){8}}
\put(61,10){\line(1,0){8}}

\put(65.4,6.4){\line(0,1){8}}
\put(66.4,5.4){\line(1,0){8}}
\put(75.4,6.4){\line(0,1){8}}
\put(66.4,15.4){\line(1,0){8}}

\put(60.7,0.7){\line(1,1){4}}
\put(70.7,0.7){\line(1,1){4}}
\put(60.7,10.7){\line(1,1){4}}
\put(70.7,10.7){\line(1,1){4}}

\put(80,0){\circle{2}}
\put(90,0){\circle*{2}}
\put(80,10){\circle*{2}}
\put(90,10){\circle{2}}
\put(85.4,5.4){\circle*{2}}
\put(95.4,5.4){\circle*{2}}
\put(85.4,15.4){\circle*{2}}
\put(95.4,15.4){\circle*{2}}
\put(80,1){\line(0,1){8}}
\put(81,0){\line(1,0){8}}
\put(90,1){\line(0,1){8}}
\put(81,10){\line(1,0){8}}

\put(85.4,6.4){\line(0,1){8}}
\put(86.4,5.4){\line(1,0){8}}
\put(95.4,6.4){\line(0,1){8}}
\put(86.4,15.4){\line(1,0){8}}

\put(80.7,0.7){\line(1,1){4}}
\put(90.7,0.7){\line(1,1){4}}
\put(80.7,10.7){\line(1,1){4}}
\put(90.7,10.7){\line(1,1){4}}

\put(100,0){\circle*{2}}
\put(110,0){\circle*{2}}
\put(100,10){\circle*{2}}
\put(110,10){\circle*{2}}
\put(105.4,5.4){\circle*{2}}
\put(115.4,5.4){\circle*{2}}
\put(105.4,15.4){\circle{2}}
\put(115.4,15.4){\circle{2}}
\put(100,1){\line(0,1){8}}
\put(101,0){\line(1,0){8}}
\put(110,1){\line(0,1){8}}
\put(101,10){\line(1,0){8}}

\put(105.4,6.4){\line(0,1){8}}
\put(106.4,5.4){\line(1,0){8}}
\put(115.4,6.4){\line(0,1){8}}
\put(106.4,15.4){\line(1,0){8}}

\put(100.7,0.7){\line(1,1){4}}
\put(110.7,0.7){\line(1,1){4}}
\put(100.7,10.7){\line(1,1){4}}
\put(110.7,10.7){\line(1,1){4}}

\put(120,0){\circle*{2}}
\put(130,0){\circle*{2}}
\put(120,10){\circle*{2}}
\put(130,10){\circle*{2}}
\put(125.4,5.4){\circle*{2}}
\put(135.4,5.4){\circle*{2}}
\put(125.4,15.4){\circle*{2}}
\put(135.4,15.4){\circle{2}}
\put(120,1){\line(0,1){8}}
\put(121,0){\line(1,0){8}}
\put(130,1){\line(0,1){8}}
\put(121,10){\line(1,0){8}}

\put(125.4,6.4){\line(0,1){8}}
\put(126.4,5.4){\line(1,0){8}}
\put(135.4,6.4){\line(0,1){8}}
\put(126.4,15.4){\line(1,0){8}}

\put(120.7,0.7){\line(1,1){4}}
\put(130.7,0.7){\line(1,1){4}}
\put(120.7,10.7){\line(1,1){4}}
\put(130.7,10.7){\line(1,1){4}}

\put(0,50){\circle{2}}
\put(10,50){\circle{2}}
\put(0,60){\circle{2}}
\put(10,60){\circle{2}}
\put(5.4,55.4){\circle{2}}
\put(15.4,55.4){\circle{2}}
\put(5.4,65.4){\circle{2}}
\put(15.4,65.4){\circle{2}}
\put(0,51){\line(0,1){8}}
\put(1,50){\line(1,0){8}}
\put(10,51){\line(0,1){8}}
\put(1,60){\line(1,0){8}}

\put(5.4,56.4){\line(0,1){8}}
\put(6.4,55.4){\line(1,0){8}}
\put(15.4,56.4){\line(0,1){8}}
\put(6.4,65.4){\line(1,0){8}}

\put(0.7,50.7){\line(1,1){4}}
\put(10.7,50.7){\line(1,1){4}}
\put(0.7,60.7){\line(1,1){4}}
\put(10.7,60.7){\line(1,1){4}}

\put(20,50){\circle*{2}}
\put(30,50){\circle{2}}
\put(20,60){\circle{2}}
\put(30,60){\circle{2}}
\put(25.4,55.4){\circle{2}}
\put(35.4,55.4){\circle{2}}
\put(25.4,65.4){\circle{2}}
\put(35.4,65.4){\circle{2}}
\put(20,51){\line(0,1){8}}
\put(21,50){\line(1,0){8}}
\put(30,51){\line(0,1){8}}
\put(21,60){\line(1,0){8}}

\put(25.4,56.4){\line(0,1){8}}
\put(26.4,55.4){\line(1,0){8}}
\put(35.4,56.4){\line(0,1){8}}
\put(26.4,65.4){\line(1,0){8}}

\put(20.7,50.7){\line(1,1){4}}
\put(30.7,50.7){\line(1,1){4}}
\put(20.7,60.7){\line(1,1){4}}
\put(30.7,60.7){\line(1,1){4}}

\put(40,50){\circle*{2}}
\put(50,50){\circle*{2}}
\put(40,60){\circle{2}}
\put(50,60){\circle{2}}
\put(45.4,55.4){\circle{2}}
\put(55.4,55.4){\circle{2}}
\put(45.4,65.4){\circle{2}}
\put(55.4,65.4){\circle{2}}
\put(40,51){\line(0,1){8}}
\put(41,50){\line(1,0){8}}
\put(50,51){\line(0,1){8}}
\put(41,60){\line(1,0){8}}

\put(45.4,56.4){\line(0,1){8}}
\put(46.4,55.4){\line(1,0){8}}
\put(55.4,56.4){\line(0,1){8}}
\put(46.4,65.4){\line(1,0){8}}

\put(40.7,50.7){\line(1,1){4}}
\put(50.7,50.7){\line(1,1){4}}
\put(40.7,60.7){\line(1,1){4}}
\put(50.7,60.7){\line(1,1){4}}

\put(60,50){\circle*{2}}
\put(70,50){\circle{2}}
\put(60,60){\circle{2}}
\put(70,60){\circle*{2}}
\put(65.4,55.4){\circle{2}}
\put(75.4,55.4){\circle{2}}
\put(65.4,65.4){\circle{2}}
\put(75.4,65.4){\circle{2}}
\put(60,51){\line(0,1){8}}
\put(61,50){\line(1,0){8}}
\put(70,51){\line(0,1){8}}
\put(61,60){\line(1,0){8}}

\put(65.4,56.4){\line(0,1){8}}
\put(66.4,55.4){\line(1,0){8}}
\put(75.4,56.4){\line(0,1){8}}
\put(66.4,65.4){\line(1,0){8}}

\put(60.7,50.7){\line(1,1){4}}
\put(70.7,50.7){\line(1,1){4}}
\put(60.7,60.7){\line(1,1){4}}
\put(70.7,60.7){\line(1,1){4}}

\put(80,50){\circle*{2}}
\put(90,50){\circle{2}}
\put(80,60){\circle{2}}
\put(90,60){\circle{2}}
\put(85.4,55.4){\circle{2}}
\put(95.4,55.4){\circle{2}}
\put(85.4,65.4){\circle{2}}
\put(95.4,65.4){\circle*{2}}
\put(80,51){\line(0,1){8}}
\put(81,50){\line(1,0){8}}
\put(90,51){\line(0,1){8}}
\put(81,60){\line(1,0){8}}

\put(85.4,56.4){\line(0,1){8}}
\put(86.4,55.4){\line(1,0){8}}
\put(95.4,56.4){\line(0,1){8}}
\put(86.4,65.4){\line(1,0){8}}

\put(80.7,50.7){\line(1,1){4}}
\put(90.7,50.7){\line(1,1){4}}
\put(80.7,60.7){\line(1,1){4}}
\put(90.7,60.7){\line(1,1){4}}

\put(100,50){\circle*{2}}
\put(110,50){\circle*{2}}
\put(100,60){\circle*{2}}
\put(110,60){\circle{2}}
\put(105.4,55.4){\circle{2}}
\put(115.4,55.4){\circle{2}}
\put(105.4,65.4){\circle{2}}
\put(115.4,65.4){\circle{2}}
\put(100,51){\line(0,1){8}}
\put(101,50){\line(1,0){8}}
\put(110,51){\line(0,1){8}}
\put(101,60){\line(1,0){8}}

\put(105.4,56.4){\line(0,1){8}}
\put(106.4,55.4){\line(1,0){8}}
\put(115.4,56.4){\line(0,1){8}}
\put(106.4,65.4){\line(1,0){8}}

\put(100.7,50.7){\line(1,1){4}}
\put(110.7,50.7){\line(1,1){4}}
\put(100.7,60.7){\line(1,1){4}}
\put(110.7,60.7){\line(1,1){4}}

\put(120,50){\circle*{2}}
\put(130,50){\circle*{2}}
\put(120,60){\circle{2}}
\put(130,60){\circle{2}}
\put(125.4,55.4){\circle{2}}
\put(135.4,55.4){\circle{2}}
\put(125.4,65.4){\circle{2}}
\put(135.4,65.4){\circle*{2}}
\put(120,51){\line(0,1){8}}
\put(121,50){\line(1,0){8}}
\put(130,51){\line(0,1){8}}
\put(121,60){\line(1,0){8}}

\put(125.4,56.4){\line(0,1){8}}
\put(126.4,55.4){\line(1,0){8}}
\put(135.4,56.4){\line(0,1){8}}
\put(126.4,65.4){\line(1,0){8}}

\put(120.7,50.7){\line(1,1){4}}
\put(130.7,50.7){\line(1,1){4}}
\put(120.7,60.7){\line(1,1){4}}
\put(130.7,60.7){\line(1,1){4}}
\end{picture}
\end{equation*}
\caption{Representatives for the motion equivalence classes $\eta_j$, $j=1,\dots,21$ shown in white.}
\label{eta}
\end{figure}
Since $Z$ is isotropic, we may as well let the weights be motion independent, i.e.\ for all configurations $(B_l,W_l)$ with $W_l\in \eta_j$ we choose the weight $w_l^{(q)}$ equal to some weight $\tilde{w}_j^{(q)}$ depending only on $j$, see \cite{am} for a justification. By \eqref{meanHM} and Proposition \ref{hBW} we must set $\tilde{w}_{22}^{(q)}=0$ for all $q<d$ in order to obtain convergent algorithms. Thus \eqref{loces} simplifies to
\begin{align*}
\hat{V}_q(Z\cap A){}&= a^{q-d} \sum_{j = 1}^{21} \tilde{w}_j^{(q)}  \sum_{l:W_l\in \eta_j }\frac{N_l(Z\cap A\cap a\La)}{N(A)}.
\end{align*}

Let $D\in\R^{21\times 21}$ be the diagonal matrix with $D_{ii}=|\eta_i|$, $1\leq i\leq 21$ (see Table \ref{Ver}) and let $(B_{l_j},W_{l_j})$ be a $2\times 2\times 2$ configuration belonging to the equivalence class $\eta_j$. Moreover, let $w^{(q)}=(\tilde{w}_1^{(q)},\ldots,\tilde{w}_{21}^{(q)})$ and
$c_q=(c_q(B_{l_1},W_{l_1}),\ldots,c_q(B_{l_{21}},W_{l_{21}}))$, $1\leq q\leq 3$.
Then, Theorem \ref{mean1stIsotropic} implies 
\begin{align}\label{EqV2Iso3D}
E \hat{V}_{2}(Z\cap A)&= \gamma e^{-\gamma EV_3(K)}w^{(2)}Dc_1^\top EV_{2}(K)+O(a)
\end{align}
and under condition \eqref{cond3}
\begin{align}\label{EqV1Iso3D}
E\hat{V}_{1}(Z\cap A)&= e^{-\gamma EV_3(K)}w^{(1)} D\big(\gamma c_2^\top EV_{1}(K)\nonumber\\
&\quad +\frac{\gamma^2}{2}c_3^\top (E V_{2}(K))^2\big)+O(a).
\end{align}
Since the constants $c_k(B_l,W_l)$ are independent of the grain distribution and a Boolean model with balls as grains is a special case of an isotropic Boolean model, 
it is enough to consider this situation in order to determine the constants $c_{k}(B_l,W_l)$.  We study this choice in detail in the next section.
Furthermore if the typical grain is a ball with random radius $r$ equation \eqref{EstMatrixEq} which is shown in the next section implies  
\[
E \hat{V}_q(Z\cap A)=a^{q-3}w^{(q)}DQv(a)^\top+O(a^{q+1}),
\]
where the matrix $Q\in\R^{21\times 8}$ is defined in \eqref{DefnQ} (see also Table \ref{Qer}) and $v(a)\in \R^{21}$ in \eqref{Defnva}.
Comparing the summand independent of $a$ with \eqref{EqV2Iso3D} respectively \eqref{EqV1Iso3D} we obtain 
\[
-Q^j_4\gamma 2E r+Q^j_5\frac{1}{2}\gamma^2 \pi^2 (E r^2)^2=\gamma c_2^j4 Er+\frac{1}{2}\gamma^2 c_3^j 4\pi^2 (E r^2)^2
\]
and
\[
-Q^j_3 \gamma Er^2\pi= \gamma c_1^j 2\pi Er^2.
\]
Thus $c_1(B_{l_j},W_{l_j}) = -\frac{1}{2}Q^j_3$, $c_2(B_{l_j},W_{l_j})=-\frac{1}{2}Q^j_4$ and $c_3(B_{l_j},W_{l_j})=\frac{1}{4}Q^j_5$. For $k=1,3$, $c_k(B_l,W_l)$ were also computed directly in \cite{am2}. 

Inserting this in Theorem \ref{mean1stIsotropic} we obtain the following corollary. 
\begin{cor}\label{DigAlgIso3d}
Let $Z$ be a stationary, isotropic Boolean model in $\R^{3}$.
If Condition \ref{cond1} is satisfied and $w^{(2)}_1=w^{(2)}_{22}=0$, then
\[
E \hat{V}_2(Z\cap A)=-\frac{1}{2}w^{(2)}D\gamma e^{-\gamma E V_3(K)}Q_3 EV_2(K)+O(a).
\]
If Condition \ref{cond2} is satisfied, $w^{(1)}_1=w^{(1)}_{22}=0$ and $w^{(1)}DQ_3=0$, then
\[
\hat{V}_1(Z\cap A)=w^{(1)}De^{-\gamma EV_3(K)}\left[-\frac{\gamma}{2}Q_4EV_1(K)+\frac{\gamma^2}{8}Q_5(EV_2(K))^2\right]+O(a).
\]
\end{cor}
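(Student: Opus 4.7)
The plan is to derive the corollary as a direct substitution of the identified constants $c_k$ into the formulas of Theorem \ref{mean1stIsotropic}, specialised to $d=3$ and $2\times 2\times 2$ configurations.

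First, I would recall the vectorised form of Theorem \ref{mean1stIsotropic} given by equations \eqref{EqV2Iso3D} and \eqref{EqV1Iso3D}. These express $E\hat V_{2}(Z\cap A)$ and $E\hat V_{1}(Z\cap A)$ (the latter subject to condition \eqref{cond3}) as linear forms in $E V_2(K)$, $E V_1(K)$ and $(E V_2(K))^2$, with coefficient vectors $c_1^\top$, $c_2^\top$, $c_3^\top$. Isotropy of $Z$ has already been used to group the $2\times 2\times 2$ configurations into the $21$ nontrivial motion-equivalence classes $\eta_j$, reducing the weight vector to $w^{(q)}\in\R^{21}$ and inserting the multiplicities $D_{jj}=|\eta_j|$.

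Second, I would invoke the identifications
\[
c_1(B_{l_j},W_{l_j}) = -\tfrac{1}{2}Q^j_3, \qquad c_2(B_{l_j},W_{l_j}) = -\tfrac{1}{2}Q^j_4, \qquad c_3(B_{l_j},W_{l_j}) = \tfrac{1}{4}Q^j_5,
\]
derived just above the corollary by comparing coefficients in the ball case. Crucially, the constants $c_k(B_l,W_l)$ depend only on the configuration and not on the grain distribution, so their values computed for balls are valid for any isotropic grain distribution. Substituting into \eqref{EqV2Iso3D} gives the stated formula for $E\hat V_2(Z\cap A)$, with the factor $-\tfrac12$ coming out front. Substituting into \eqref{EqV1Iso3D} gives the formula for $E\hat V_1(Z\cap A)$, with $-\tfrac{\gamma}{2}Q_4$ and $\tfrac{\gamma^2}{8}Q_5$ arising from the respective factors of $\gamma$ and $\tfrac{\gamma^2}{2}$ in \eqref{EqV1Iso3D} combined with the prefactors $-\tfrac{1}{2}$ and $\tfrac{1}{4}$. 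The hypothesis \eqref{cond3} of Theorem \ref{mean2nd} translates into $w^{(1)}DQ_3=0$ via the same identification of $c_1$ with $-\tfrac12 Q_3$.

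The corollary itself is therefore essentially bookkeeping once the column identifications are granted, and no new computation is required. The main conceptual obstacle sits in the preceding paragraph rather than in this proof, namely establishing equation (EstMatrixEq) and the resulting expressions for $c_1,c_2,c_3$ in terms of columns of $Q$ in Section \ref{Balls}; in the corollary itself the only risk is sign errors and powers of $2$ arising from the various normalising constants, which I would verify by matching the $\gamma$ and $\gamma^2$ coefficients separately.
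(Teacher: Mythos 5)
Your proposal is correct and follows essentially the same route as the paper: the paper's proof is precisely the substitution of the identifications $c_1=-\tfrac12 Q_3$, $c_2=-\tfrac12 Q_4$, $c_3=\tfrac14 Q_5$ (obtained by comparing the ball-grain expansion \eqref{EstMatrixEq} with \eqref{EqV2Iso3D} and \eqref{EqV1Iso3D}, valid for general isotropic grains since the constants are distribution-independent) into Theorem \ref{mean1stIsotropic}, with condition \eqref{cond3} becoming $w^{(1)}DQ_3=0$ exactly as you state. The bookkeeping of the factors $\gamma$, $\tfrac{\gamma^2}{2}$ and the prefactors $-\tfrac12$, $\tfrac14$ checks out.
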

Now we obtain conditions on optimal weights of local algorithms for the estimation of $\overline{V}_2(Z)$ and $\overline{V}_1(Z)$. 
\begin{thm}
Let $Z$ be a stationary, isotropic Boolean model in $\R^{3}$.
Let Condition \ref{cond1} be satisfied. Then, $\hat{V}_2(Z\cap A)$ is an asymptotically unbiased estimator of $\overline{V}_2(Z)$ if 
\[w^{(2)}D Q_3=-2 \text{ and } w^{(2)}_1=w^{(2)}_{22}=0.\]
Let Condition \ref{cond2} be satisfied. Then $\hat{V}_1(Z\cap A)$ is an asymptotically unbiased estimator of $\overline{V}_1(Z)$ if
\[w^{(1)}D Q_4=-2, \quad w^{(1)}D Q_5=-\pi\]
and 
\[
w^{(1)}DQ_3=w^{(1)}_1=w^{(1)}_{22}=0.
\]
This is satisfied by the weights in Table \ref{optimal}.
\end{thm}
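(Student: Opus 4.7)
The plan is to match the asymptotic expansions from Corollary \ref{DigAlgIso3d} term by term against the Miles formulas \eqref{Miles}, and then verify by inspection that the weights in Table \ref{optimal} solve the resulting linear system.

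First I would treat the surface area estimator. By Corollary \ref{DigAlgIso3d}, whenever $w^{(2)}_1=w^{(2)}_{22}=0$ we have
\begin{equation*}
E\hat{V}_2(Z\cap A) \;=\; -\tfrac{1}{2}\,w^{(2)} D Q_3\,\gamma e^{-\gamma EV_3(K)} EV_2(K) + O(a).
\end{equation*}
The second Miles formula in \eqref{Miles} gives $\altoverline{V}_2(Z) = e^{-\gamma EV_3(K)}\gamma EV_2(K)$. Since this must hold for \emph{all} admissible grain distributions (and $e^{-\gamma EV_3(K)}\gamma EV_2(K)$ can be made nonzero), asymptotic unbiasedness forces the coefficient to agree, i.e.\ $w^{(2)}DQ_3 = -2$. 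Together with the convergence hypotheses $w^{(2)}_1=w^{(2)}_{22}=0$ inherited from Theorem~\ref{mean1st}, this proves the first assertion.

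Next I would handle the integrated mean curvature estimator analogously. Corollary \ref{DigAlgIso3d} gives, under $w^{(1)}_1 = w^{(1)}_{22}=0$ and $w^{(1)}DQ_3 = 0$,
\begin{equation*}
E\hat{V}_1(Z\cap A) = e^{-\gamma EV_3(K)}\!\left[-\tfrac{\gamma}{2}\,w^{(1)}DQ_4\,EV_1(K) + \tfrac{\gamma^2}{8}\,w^{(1)}DQ_5\,(EV_2(K))^2\right] + O(a),
\end{equation*}
while the third Miles formula in \eqref{Miles} reads
\begin{equation*}
\altoverline{V}_1(Z) = e^{-\gamma EV_3(K)}\!\left(\gamma EV_1(K) - \tfrac{\gamma^2\pi}{8} EV_2(K)^2\right).
\end{equation*}
Since the grain distribution can be chosen so that $EV_1(K)$ and $EV_2(K)$ vary independently (e.g.\ by considering balls of varying mean radius and second moment of the radius), matching coefficients gives the two conditions $w^{(1)}DQ_4 = -2$ and $w^{(1)}DQ_5 = -\pi$. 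Combined with the hypothesis $w^{(1)}DQ_3=0$, which is precisely \eqref{cond3} in the isotropic setting (using $c_1 = -\tfrac12 Q_3$), and $w^{(1)}_1 = w^{(1)}_{22} = 0$ from Theorem~\ref{mean2nd}, this proves the second assertion.

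Finally, the statement that the weights in Table~\ref{optimal} satisfy the resulting linear equations is just a direct check: one substitutes the tabulated values of $w^{(2)}$ and $w^{(1)}$ into the rows of $DQ_3$, $DQ_4$, $DQ_5$ read off from Table~\ref{Qer}, and verifies the four scalar identities $w^{(2)}DQ_3=-2$, $w^{(1)}DQ_3=0$, $w^{(1)}DQ_4=-2$, $w^{(1)}DQ_5=-\pi$. The only subtlety in the whole argument is the independence justification needed to separate the $EV_1(K)$ and $(EV_2(K))^2$ terms in the second step; this is routine because the class of admissible isotropic grain distributions is rich enough to realize arbitrary ratios of these two moments, so no additional linear combination of the two Miles coefficients can spoil the matching.
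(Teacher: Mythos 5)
Your proposal is correct and follows essentially the same route as the paper: compare the expansion in Corollary \ref{DigAlgIso3d} with the Miles formulas \eqref{Miles} to read off the coefficient conditions, then check that the weights of Table \ref{optimal} satisfy them (the paper does this last step by noting the weights were chosen to fulfill \eqref{ligning}, whose components in positions $3,4,5$ of $b_2$ and $b_1$ are exactly the stated conditions). The only superfluous element is your independence argument for separating the $EV_1(K)$ and $(EV_2(K))^2$ terms, which would be needed for necessity but not for the stated ``if'' direction, where simple substitution suffices.
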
\label{UnbiasIso3d}
\begin{proof}
The assertion follows from a comparison of Corollary \ref{DigAlgIso3d} with the Miles formulas \eqref{Miles}. The weights in Table \ref{optimal} fulfill the asserted condition since they fulfill \eqref{ligning} and $Q_1,\ldots,Q_8$ are the columns of the matrix $Q$.
\end{proof}
%The analogue of this theorem holds in all dimensions $d$, see \cite{am} for the case $d=2$. 

The weights $w^{(1)}$ from Table \ref{optimal} are also optimal based on the results of \cite{am2} for the design based setting where an $r$-regular set is observed on a randomly translated and rotated lattice.
This follows since $w^{(1)}DQ_3=0$ and $w^{(1)}_1=0$ imply the first condition on the weights in \cite[Cor. 5.1 (9)]{am2} and hence the convergence of the estimator, and $w^{(1)}DQ_4=-2$ implies the second condition of \cite[Cor. 5.1 (9)]{am2} and hence that the estimator in the same theorem is unbiased. 
Thus, the weights in Table \ref{optimal} are an optimal choice for isotropic Boolean models in $\R^{3}$ with compact convex grains satisfying Condition \ref{cond2}. But in particular, they are also optimal based on the results of \cite{am2} for the design based setting where an $r$-regular set is observed on a randomly translated and rotated lattice.

\section{Optimal algorithms for $3D$ Boolean models with balls as grains}\label{Balls}

We now consider a stationary Boolean model whose grains are a.s.\ random balls of radius $r\geq \eps$ for some fixed $\eps>0$. The choice of balls as grains is also the situation studied in \cite{nagel}. 
In this situation we can show a refined third order version of Lemma \ref{convbound} with $K$ replaced by a ball using intrinsic power volumes. 
This third order expansion will allow us to strengthen the previous results. 

\subsection{Intrinsic power volumes}
The intrinsic power volumes $V_j^{(m)}$ are positive and $m$-homogeneous functionals on finite subsets of $\R^d$ introduced in \cite{markus}.
The key ingredient for the refinement of Lemma \ref{convbound} is
the following result of \cite[Corollary 6]{markus}: 
\begin{align}
\MoveEqLeft V_3(\conv(F) \oplus rB^3)-V_3(F \oplus rB^3)\nonumber\\
&= \pi V_1^{(3)}(F) + 2\sum_{n=1}^{\infty} \frac{(2n-3)!!}{2n!!}V_2^{(2n+2)}(F)r^{-(2n-1)}\label{intrinsicPowerVol}
\end{align}
which holds whenever $F$ is a finite set satisfying Condition (A) of that paper and $r$ is sufficiently large. %See also \cite{markus} for the definition of the double factorial.
Let  $F\subseteq C^2_{0,0}$ be nonempty. Then $F$ is the vertex set of $\conv(F)$ and no three points in $F$ form a triangle with a strictly obtuse angle. Thus Condition (A) of \cite{markus} is satisfied for $F$ as explained in this paper.
Moreover, $V_1^{(3)}$ is given by the following formula \cite[Equation (17)]{markus}:
\begin{equation*}
V_1^{(3)}(F)= \frac{1}{12}\sum_{H\in \mathcal{F}_1(\conv (F))}\gamma(\conv(F),H)V_1(H)^3.
\end{equation*}
Here $\mathcal{F}_1(\conv (F))$ is the set of 1-faces in $\conv (F)$ and $\gamma(\conv(F),H)$ is the exterior angle, see \cite[Equation (3.2)]{markus}.

Now, for sufficiently large $\frac{r}{a}$ an application of \eqref{intrinsicPowerVol} implies
\begin{align*}
\MoveEqLeft V_3(a\conv(F) \oplus rB^3)-V_3(aF \oplus rB^3){}\\
&= \pi V_1^{(3)}(aF) + 2\sum_{n=1}^{\infty} \frac{(2n-3)!!}{2n!!}V_2^{(2n+2)}(aF)r^{-(2n-1)}\\
&= a^3 \pi V_1^{(3)}(F) + 2\sum_{n=1}^{\infty} \frac{(2n-3)!!}{2n!!}V_2^{(2n+2)}(F)a^{2n+2}r^{-(2n-1)}.
\end{align*}
Since $\frac{r}{a}\geq \frac{\eps}{a}$ a.s.\ and all coefficients are positive, 
\begin{align} \label{powervol}
\MoveEqLeft EV_3(a\conv(F) \oplus rB^3)-EV_3(aF \oplus rB^3)\\
&= a^3 \pi V_1^{(3)}(F) + 2\sum_{n=1}^{\infty} \frac{(2n-3)!!}{2n!!}V_2^{(2n+2)}(F)a^{2n+2}E(r^{-(2n-1)})\nonumber
\end{align}
for sufficiently small $a$. 
The formulas for the intrinsic power volumes $V_2^{(2n+2)}$ are rather involved, so the above formula is not suitable for general computations. 
However, we obtain
\begin{align*} 
EV_3(a\conv(F) \oplus rB^3)-EV_3(aF \oplus rB^3)-a^3\pi V_1^{(3)}(F)\in O(a^4).
\end{align*}

Now \eqref{exp}, \eqref{powervol} and the Steiner formula \eqref{steiner} yield 
\begin{align*}
\MoveEqLeft P(aF\subseteq \R^3 \backslash Z)=\exp\Big(-\gamma \Big[\frac{4}{3}\pi E r^3+a V_1(\conv F) E r^2\pi \\
&+a^2 V_2(\conv F)2E r+a^3\left(V_3(\conv F)-\pi V_1^{(3)}(F)\right)+O(a^4)\Big]\Big).
\end{align*}
A development of the exponential function implies the third order expansion
\begin{align*}
P(aF\subseteq \R^3 \backslash Z)&=\exp(-\gamma \frac{4}{3}\pi E r^3)
\Big(1-a\gamma\pi E r^2V_1(\conv F)+a^2\frac{\gamma^2\pi^2 (E r^2)^2}{2}V_1(\conv F)^2\\
&\quad-a^3\frac{\gamma^3\pi^3 (E r^2)^3}{6}V_1(\conv F)^3+a^3 2\gamma^2\pi E r Er^2 V_1(\conv F) V_2(\conv F)\\
&\quad-a^2\gamma 2E r V_2(\conv F)-a^3\gamma V_3(\conv F) +a^3 \gamma \pi V_1^{(3)}(F)\Big).
\end{align*}

Now define
\begin{align}
 v(a)=e^{-\gamma \frac{4}{3}\pi E r^3} &\Big(e^{\gamma \frac{4}{3}\pi E r^3},\, 1,\, -a \gamma Er^2 \pi ,\, -a^2 \gamma 2 Er ,\, a^2\frac{\gamma^2\pi^2 (Er^2)^2}{2},\nonumber\\
&\;\;  -a^3\gamma ,\,  a^3 \gamma^2 2\pi E r Er^2,-a^3 \frac{\gamma^3 }{6} \pi^3 (E r^2)^3 \Big)\label{Defnva}.
\end{align}
%\begin{align*}
%\MoveEqLeft E=e^{-\gamma EV_3(rB^3)}\Big(e^{\gamma EV_3(rB^3)},1,-a\gamma EV_2(rB^3) ,-a^2\gamma EV_1(rB^3) ,a^2\gamma^2 EV_2(rB^3)^2 ,\\
 %& -a^3\gamma , a^3 \gamma^2 EV_2(rB^3)EV_3(rB^3), -a^3\gamma^3 EV_2(rB^3)^3 \Big).
%\end{align*}
For $1\leq i\leq 21$ and $S \in \eta_i$ we define $p^i=P(aS \subseteq \R^3 \backslash Z)$. Let $P^i =(P_1^i,\dots,P_8^i)$ be the vector 
\begin{align*}
P^i ={}& (0 ,1, V_1(\conv S), V_2(\conv S), V_1(\conv S)^2, \\
& V_3(\conv S )-\pi V_1^{(3)}(S),  V_1(\conv S )V_2(\conv S ), V_1(\conv S )^3).
\end{align*}
%\begin{align*}
%P^j ={}& (0 ,1, \frac{1}{2}V_1(\conv S),  \frac{1}{2}V_2(\conv S), \frac{1}{8}V_1(\conv S)^2, \\
%& V_3(\conv S )-\pi V_1^{(3)}(S),  \frac{1}{4}V_1(\conv S )V_2(\conv S ),  \frac{1}{48}V_1(\conv S )^3).
%\end{align*}
 Then
\begin{equation}\label{pjequation}
p^i = P^i  v(a)^T +O(a^4).
\end{equation}

The values needed to compute $P^i$ for $i\neq 21$ are given in Table \ref{Ver} in the appendix, see also \cite[Table 4]{nagel} for the first three columns.
For $W_{l_i}\in \eta_i$, the configuration $(B_{l_i},W_{l_i})$ satisfies by \eqref{EI1} and \eqref{pjequation} the relation
\begin{align}
P(a B_{l_i}\subseteq Z,a W_{l_i} \subseteq \R^3 \backslash Z) {}&=\sum_{j=1}^{21} \sum_{S\subseteq B_{l_i}} (-1)^{|S|} p^j\mathds{1}_{\{W_{l_i}\cup S \in \eta_j\}} \nonumber\\
&=\bigg(\sum_{j=1}^{21} \sum_{S\subseteq B_{l_i}} (-1)^{|S|} P^j\mathds{1}_{\{W_{l_i}\cup S \in \eta_j\}}\bigg)\cdot v(a)^T+O(a^4)\nonumber\\
&= Q^i \cdot v(a)^T+O(a^4) \label{HitAndMiss1}
\end{align}
where 
\begin{equation}\label{DefnQ}
 Q^i  = (Q_1^i,\dots,Q_8^i) =\sum_{j=1}^{21} \left(\sum_{S\subseteq B_{l_i}} (-1)^{|S|}\mathds{1}_{ \{W_{l_i}\cup S \in \eta_j\}}\right)P^j.
\end{equation}

Writing $Q=\begin{pmatrix}Q^1\\ \vdots\\ \;\;Q^{21} \end{pmatrix}$ and 
 $P=\begin{pmatrix}P^1\\ \vdots\\ \;\;P^{21} \end{pmatrix}$, we thus get a matrix $M$ such that
\begin{equation*}
Q= M P
\end{equation*}
where the entries of $M$ are given by
\begin{equation*}
(M)_{ij}= \sum_{S\subseteq B_{l_i}} (-1)^{|S|} \mathds{1}_{\{W_{l_i}\cup S \in \eta_j\}}.
\end{equation*}
The matrix $M$ is shown in Table \ref{M} in the appendix. Clearly, $Q_{1}^j=\mathds{1}_{\{j=1\}}-\mathds{1}_{\{j=22\}}$ and $Q_{2}^j=\mathds{1}_{\{j=22\}}$. The values of $(Q_3^j,\dots,Q^j_6)$ are given in the appendix Table \ref{Qer}.

%Let $D$ be the diagonal matrix with $D_{ii}$ equal to the cardinality of $\eta_i$ and 
Let $w^{(q)}=(\tilde{w}_1^{(q)},\dots,\tilde{w}_{21}^{(q)})$.
By \eqref{meanHM} and since the configurations of one motion equivalence class all have the same weight, the mean of a local algorithm is thus given by
\begin{align*}
E\hat{V}_q(Z\cap A) &=a^{q-3} \sum\limits_{j=1}^{21} \tilde{w}_j^{(q)} |\{l:W_l\in\eta_j\}| P(aB_{l_j}\subseteq Z, aW_{l_j}\subseteq \R^d\setminus Z).
\end{align*}
Now it follows from \eqref{HitAndMiss1} that
\begin{equation}\label{EstMatrixEq}
E\hat{V}_q(Z\cap A) =a^{q-3} w^{(q)} D Q v(a)^\top+O(a^{q+1}).
\end{equation}

Note that using $V_0(rB^3)=1$, $V_1(rB^3)=4r$, $V_2(rB^3)=2\pi r^2$ and $V_3(rB^3)=\frac{4}{3}\pi r^3$, the Miles formulas \eqref{Miles} can be written as 
\begin{equation*}
\altoverline{V}_q(Z)=a^{q-3}v(a) b_q^T, 
\end{equation*}
where $0\leq q\leq 3$ and
\begin{align}\label{ber}
b_3 &= (1,-1,0,0,0,0,0,0)\\ \nonumber
b_2 &= (0,0,-2,0,0,0,0,0)\\ \nonumber
b_1 &= (0,0,0,-2,-{\pi},0,0,0)\\ \nonumber
b_0 &= (0,0,0,0,0,-1, -2,-{\pi}).
\end{align}

In particular, the best possible local algorithm for $\altoverline{V}_q(Z)$ based on the computations of this section would be one that satisfies
\begin{equation}\label{ligning}
w^{(q)} D \,Q = b_q.
\end{equation}
This can be used to check how well suited an established algorithm is for Boolean models, as we shall see in the next section.

%\begin{tabular}{cc}
%&(1, 0, 0, 0, 0, 0, 0,0,0,0,0,0,0,0,0,0,0,0,0,0,0) \\ 
%&(-1, 1, 0, 0, 0, 0,0,0,0,0,0,0,0,0,0,0,0,0,0,0,0) \\
%&(1, -2, 1, 0, 0 , 0,0,0,0,0,0,0,0,0,0,0,0,0,0,0,0) \\
%&( 1, -2, 0,1, 0, 0,0,0,0,0,0,0,0,0,0,0,0,0,0,0,0 )\\ 
%&( 1,  -2, 0,0,1,0,0,0,0,0,0,0,0,0,0,0,0,0,0,0,0) \\
%&( -1,  3, -2,-1,0 , 1,0,0,0,0,0,0,0,0,0,0,0,0,0,0,0) \\
%&(- 1,  3, -1 , -1, -1, 0,1,0,0,0,0,0,0,0,0,0,0,0,0,0,0) \\
%&( -1,  3,  0, -3, 0 , 0,0,1,0,0,0,0,0,0,0,0,0,0,0,0,0) \\
%&( 1,  -4,  4, 2, 0, -4,0,0,1,0,0,0,0,0,0,0,0,0,0,0,0) \\
%&( 1,  -4,  3, 3, 0, -3,0,-1,0,1,0,0,0,0,0,0,0,0,0,0,0) \\
%&( 1,  -4,  3, 2, 1,-2,-2,0,0,0,1,0,0,0,0,0,0,0,0,0,0) \\
%&( 1,  -4,  2, 2, 2,  0,-4,0,0,0,0,1,0,0,0,0,0,0,0,0,0) \\
%&( 1,  -4,  0, 6, 0, 0,0,-4,0,0,0,0,1,0,0,0,0,0,0,0,0) \\
%&( 1,  -4, 2, 3, 1, -1,-2,-1,0,0,0,0,0,1,0,0,0,0,0,0,0) \\
%&( -1,  5,  -3, -6, -1, 3,3,4,0,-1,0,0,-1,-3,1,0,0,0,0,0,0) \\
%&( -1,  5, -4, -4, -2, 3,6,1,0,0,-2,-1,0,-2,0,1,0,0,0,0,0) \\
%&( -1,  5,  -5, -4, -1, 6,3,1,-1,-1,-2,0,0,-1,0,0,1,0,0,0,0) \\
%&( 1,  -6, 4, 10, 1, -6,-6,-8,0,2,2,1,2,8,-2,-4,0,1,0,0,0) \\
%&( 1,  -6, 6, 7, 2, -8,-8,-4,1,2,4,1,1,6,-2,-2,-2,0,1,0,0) \\
%&( 1,  -6, 7, 6, 2, -10,-8,-2,2,2,6,1,0,4,0,-2,-4,0,0,1,0) \\
%&( -1,  7,  -7, -13, -1 , 15,9,11,-3,-6,-8,-1,-3,-14,5,7,9,-1,-3,-3,1) \\
%\end{tabular}

\subsection{Application to algorithms}\label{algor}
In \cite[Table 1]{nagel} a set of weights is suggested based on a discretisation of the Crofton formula, using an approximation of $Z$ by 4 different adjacency systems. For each algorithm, we apply the above to compute the left hand side of \eqref{ligning}. The outcomes are shown as row vectors in  Table \ref{meannagel}. These should be compared to the optimal values $b_q$.
%\begin{table}
%\begin{tabular}{ccc}
%\hline
%$\altoverline{V}_i$&Adjacency system & $Q^TDw^T$\\
%\hline
%$\altoverline{V}_2$&All& (0,0,0,-2,-3.608,0,-3.971,-11.80)\\
%$\altoverline{V}_1$&All& (0,0,-2,0,-2.7808,0.5253,-0.0025,-4.008)\\
%$\altoverline{V}_0$&$(\mathbb{F}_6,\mathbb{F}_{26})$&(0,0,0,0,-0.0254,-1,-2.2016,-3.5254)\\
%$\altoverline{V}_0$&$(\mathbb{F}_{14.1},\mathbb{F}_{14.1})$&(0,0,0,0,-0.0432,-1,-2.6316, -4.8780)\\
%$\altoverline{V}_0$&$(\mathbb{F}_{14.2},\mathbb{F}_{14.2})$&(0,0,0,0,-0.0491, -1,-2.6491,-4.9529)\\
%$\altoverline{V}_0$&$(\mathbb{F}_{26},\mathbb{F}_{6})$&(0,0,0,0,0,-1, 3, -6)\\
%\hline
%\end{tabular}
%\caption{Bias of the algorithms suggested in \cite{nagel}}
%\end{table}
\begin{table}
\footnotesize
\centering
\begin{tabular}{ccc@{\,}c@{\,}c@{\,}c@{\,}c@{\,}c@{\,}c@{\,}c}
\hline
$\altoverline{V}_i$&Adjacency system & &&&&$w^{(q)}D\, Q$&&&\\
\hline
$\altoverline{V}_2$&All& 0&0&-2& 0&-2.7798&0.5253&-0.0015&-4.0161\\
$\altoverline{V}_1$&All& 0&0&0&-2&-3.6096&0&-3.9733&-11.7843\\
$\altoverline{V}_0$&$(\mathbb{F}_6,\mathbb{F}_{26})$&0&0&0&0&-0.0131&-1&-2.1895&-3.6284\\
&& &&&& (-0.0130)&  &(-2,19) & (3,62)\\
$\altoverline{V}_0$&$(\mathbb{F}_{14.1},\mathbb{F}_{14.1})$&0&0&0&0&-0.0399&-1&-2.6286&-4.9038\\
&& &&&& (-0.0399)&  &(-0,42) & (-4.90)\\
$\altoverline{V}_0$&$(\mathbb{F}_{14.2},\mathbb{F}_{14.2})$&0&0&0&0&-0.0460&-1&-2.6461&-4.9786\\
&& &&&& (-0.105)&  &(-0,44) & (-5.34)\\
$\altoverline{V}_0$&$(\mathbb{F}_{26},\mathbb{F}_{6})$&0&0&0&0&0&-1&-3&-6\\
&& &&&& (0)&  &(-3) & (-6)\\
\hline
\end{tabular}
\caption{Mean of the algorithms suggested in \cite{nagel}. This should be compared to the true values \eqref{ber}. The values computed in \cite{nagel} are shown in parenthesis.}
\label{meannagel}
\end{table}

The computations of the asymptotic bias are also made in \cite{nagel} up to second order. The third order term is approximated by leaving out the unknown contribution from $V_1^{(3)}$. Surprisingly, we see that these terms do not contribute. 
The $\altoverline{V}_2$ estimator is asymptotically unbiased, but there is a bias of order $a$. The estimator for $\altoverline{V}_1$ is biased and the estimators for $\altoverline{V}_0$ do not even converge when $a\to 0$ except for one of them, which instead has a large bias. This was already observed in \cite{nagel}.

We remark here that the constants in Table \ref{meannagel} differ from those in \cite[Table 4]{nagel}, which are again different from those computed in \cite[Table 1]{oh}. While most of the numbers agree for three of the algorithms for $\altoverline{V}_0$, they are far off for one of them. We have not been able to find an explanation for this.

We suggest instead to estimate $\altoverline{V}_q$ by means of an algorithm that satisfies \eqref{ligning} since this will not only be asymptotically unbiased but in finite resolution the bias will only be of order $O(a^{3-q+1})$. A set of weights satisfying \eqref{ligning} is given by Table \ref{optimal}. Of course, adding any solution to the homogeneous system $w^{(q)}D\,Q=0$ yields another set of weights that may be just as good asymptotically. 
\section*{Appendix}
In this appendix, we collect some tables of values computed in the paper.

\begin{table}[H]\footnotesize
\centering
\begin{tabular}{|c@{}c@{}c@{}c@{}c@{}c@{}c@{}c@{}c@{}c@{}c@{}c@{}c@{}c@{}c@{}c@{}c@{}c@{}c@{}c@{}c|}
%\cline{1-1}\cline{21-21}
1& 0& 0& 0& 0 & 0&0&0&0&0&0&0&0&0&0&0&0&0&0&0&0 \\ 
-1& 1& 0 & 0& 0 & 0&0&0&0&0&0&0&0&0&0&0&0&0&0&0&0 \\
1& -2& 1& 0& 0 & 0&0&0&0&0&0&0&0&0&0&0&0&0&0&0&0 \\
 1& -2& 0&1 & 0 & 0&0&0&0&0&0&0&0&0&0&0&0&0&0&0&0 \\ 
 1&  -2& 0 &0&1 &0&0&0&0&0&0&0&0&0&0&0&0&0&0&0&0 \\
 -1&  3& -2&-1&0 & 1 &0&0&0&0&0&0&0&0&0&0&0&0&0&0&0 \\
- 1&  3& -1 & -1& -1 & 0&1&0&0&0&0&0&0&0&0&0&0&0&0&0&0 \\
 -1&  3&  0& -3& 0 & 0&0&1&0&0&0&0&0&0&0&0&0&0&0&0&0 \\
 1&  -4&  4& 2& 0 & -4&0&0&1&0&0&0&0&0&0&0&0&0&0&0&0 \\
 1&  -4&  3& 3& 0 & -3&0&-1&0&1&0&0&0&0&0&0&0&0&0&0&0 \\
 1&  -4&  3& 2& 1 & -2&-2&0&0&0&1&0&0&0&0&0&0&0&0&0&0 \\
 1&  -4&  2& 2& 2 & 0&-4&0&0&0&0&1&0&0&0&0&0&0&0&0&0 \\
 1&  -4&  0& 6& 0 & 0&0&-4&0&0&0&0&1&0&0&0&0&0&0&0&0 \\
 1&  -4& 2& 3& 1 & -1&-2&-1&0&0&0&0&0&1&0&0&0&0&0&0&0 \\
 -1&  5&  -3& -6& -1 & 3&3&4&0&-1&0&0&-1&-3&1&0&0&0&0&0&0 \\
 -1&  5& -4 & -4& -2 & 3&6&1&0&0&-2&-1&0&-2&0&1&0&0&0&0&0 \\
 -1&  5&  -5& -4& -1 & 6&3&1&-1&-1&-2&0&0&-1&0&0&1&0&0&0&0 \\
 1&  -6& 6 & 6& 3 & -6&-12&-2&0&0&6&3&0&6&0&-6&0&1&0&0&0 \\
 1&  -6& 6 & 7& 2 & -8&-8&-4&1&2&4&1&1&6&-2&-2&-2&0&1&0&0 \\
 1&  -6& 7 & 6& 2 & -10&-8&-2&2&2&6&1&0&4&0&-2&-4&0&0&1&0 \\
 -1&  7&  -9& -9& -3 & 15&15&5&-3&-4&-12&-3&-1&-12&3&9&9&-1&-3&-3&1 \\
%\cline{1-1}\cline{21-21}
\end{tabular}
\caption{The matrix $M$.}
\label{M}
\end{table}

\begin{table}[H]\footnotesize
\centering
\begin{tabular}{c@{}c@{}c@{}c@{}c@{}c}
\hline
$\eta$&$V_3(F)$& $V_2(F)$& $V_1(F)$& $24 V_1^{(3)}(F)$&$D_{jj}$\\ 
\hline 
$\eta_1$&1& 3& 3 &3&1 \\
$\eta_2$&$\frac{5}{6}$& $\frac{9}{4}+\frac{ \sqrt{3}}{4}$& $\frac{9}{4}+3{\sqrt{2}}\xi$& $\frac{9}{4}+6{\sqrt{2}}\xi$&8\\
$\eta_3$&$\frac{1}{2}$& $\frac{3}{2}+\frac{\sqrt{2}}{2}$&$ 2+\frac{\sqrt{2}}{2}$& $2+\sqrt{2}$&12\\ 
$\eta_4$& $\frac{2}{3}$& $\frac{3}{2}+\frac{\sqrt{3}}{2}$& $\frac{3}{2}+6{\sqrt{2}}\xi $&$\frac{3}{2}+12{\sqrt{2}}\xi $ &12\\
$\eta_5$& $\frac{2}{3}$& $\frac{3}{2}+\frac{\sqrt{3}}{2}$& $\frac{3}{2}+6{\sqrt{2}}\xi $&$\frac{3}{2}+12{\sqrt{2}}\xi $ &4\\
$\eta_6$& $\frac{1}{3}$& $ 1+\frac{\sqrt{2}}{2}$& $\frac{3}{2}+\frac{\sqrt{2}}{2}+\frac{\sqrt{3}}{6}$&$\frac{3}{2}+{\sqrt{2}}+\frac{\sqrt{3}}{2}$&24 \\
%$\eta_5^2$&$\frac{1}{3}$& $ 1+\frac{\sqrt{2}}{2}$& $\frac{3}{2}+\frac{\sqrt{2}}{2}+3\sqrt{2}\xi $ &$\frac{3}{2}+\frac{\sqrt{2}}{2}+3\sqrt{2}\xi $ & & & \\
$\eta_7$& $\frac{1}{3}$& $\frac{3}{4}+\frac{\sqrt{2}}{2}+\frac{\sqrt{3}}{4}$& $ \frac{5}{4}+\frac{\sqrt{2}}{2}+3{\sqrt{2}}\xi $&$ \frac{5}{4}+{\sqrt{2}}+6{\sqrt{2}}\xi $ &24\\
$\eta_8$& $\frac{1}{2}$& $ \frac{3}{4}+\frac{3\sqrt{3}}{4}$& $ \frac{3}{4}+9{\sqrt{2}}\xi$& $ \frac{3}{4}+18{\sqrt{2}}\xi$&8\\
$\eta_9$& 0&  1&  2& 2&6\\
$\eta_{10}$& $\frac{1}{6}$&  $\frac{3}{4}+\frac{\sqrt{3}}{4}$& $ \frac{3}{4}+\frac{3\sqrt{2}}{2}-3\sqrt{2}\xi $&$ \frac{3}{4}+{3\sqrt{2}}-6\sqrt{2}\xi $ &8\\
$\eta_{11}$& $\frac{1}{6}$& $ \frac{1}{2}+\frac{\sqrt{2}}{2}$& $ 1+\frac{\sqrt{2}}{2}+\frac{\sqrt{3}}{3}$& $ 1+{\sqrt{2}}+{\sqrt{3}}$ &24\\
$\eta_{12}$& 0& $ {\sqrt{2}}$&  $1+\sqrt{2}$ & $1+2\sqrt{2}$  &6\\
$\eta_{13}$& $\frac{1}{3}$&  $\sqrt{3}$& $ 12\sqrt{2}\xi $  & $ 24\sqrt{2}\xi $ &2\\
$\eta_{14}$& $\frac{1}{6}$&  $\frac{1}{4}+\frac{\sqrt{2}}{2}+\frac{\sqrt{3}}{4}$ & $ \frac{3}{4}+\frac{\sqrt{2}}{2}+\frac{\sqrt{3}}{6}+3\sqrt{2}\xi$& $ \frac{3}{4}+{\sqrt{2}}+\frac{\sqrt{3}}{2}+6\sqrt{2}\xi$& 24\\
$\eta_{15}$& 0&  $\frac{\sqrt{3}}{2}$& $\frac{3\sqrt{2}}{2} $& ${3\sqrt{2}} $ &8\\
$\eta_{16}$& 0& $\frac{\sqrt{2}}{2}$& $ \frac{1}{2}+\frac{\sqrt{2}}{2}+\frac{\sqrt{3}}{2}$&$ \frac{1}{2}+{\sqrt{2}}+\frac{3\sqrt{3}}{2}$&24 \\
$\eta_{17}$& 0& $ \frac{1}{2}$& $1+\frac{\sqrt{2}}{2} $&$1+{\sqrt{2}} $ &24 \\
$\eta_{18}$& 0&  0& $\sqrt{3}$&$3\sqrt{3}$  &4\\
$\eta_{19}$& 0&  0& ${\sqrt{2}}$&$2{\sqrt{2}}$&12  \\
$\eta_{20}$& 0&  0&  1& 1&12\\
$\eta_{21}$& 0&  0&  0& 0&8\\
\hline 
\end{tabular}
\caption{List of $V_q(F)$ and $V_1^{(3)}(F)$ for $F\in \eta_j$. Here $\xi= \frac{\arctan(\sqrt{2})}{2\pi }$.}
\label{Ver}
\end{table}

\begin{table}[H]\footnotesize
\centering
\begin{tabular}{ccccc}
\hline
$j$&$ Q^j_3$& $Q^j_4$& $Q^j_5$& $ Q^j_6$\\ 
\hline 
$1$& 3& 3& 9 & $1 - \frac{\pi}{8}$\\
$2$&$-\frac{3}{4}+3\sqrt{2}\xi$& $\frac{ \sqrt{3}-3}{4}$& -0.6186 &$ - \frac{4+\pi(-\frac{3}{4}+6\sqrt{2}\xi)}{24}$  \\
$3$&$\frac{1}{2}-6\sqrt{2}\xi+\frac{\sqrt{2}}{2}$& $\frac{\sqrt{2}-\sqrt{3}}{2}$& -0.4344  &$ -\frac{4+\pi(\frac{1}{2}-12\sqrt{2}\xi+\sqrt{2})}{24}$ \\ 
$4$& $0$& $0$& 0.02203&0  \\
$5$& $0$& $0$&0.02203 & 0 \\
$6$& $-\frac{1}{4}-\frac{\sqrt{2}}{2}+3\sqrt{2}\xi+\frac{\sqrt{3}}{6}$& $ \frac{1-2\sqrt{2}+\sqrt{3}}{4}$&-0.06855 &$ \frac{4-\pi(-\frac{1}{4}-\sqrt{2}+6\sqrt{2}\xi+\frac{\sqrt{3}}{2})}{24}$   \\
%$\eta_5^2$&$\frac{1}{3}$& $ 1+\frac{\sqrt{2}}{2}$& $\frac{3}{2}+\frac{\sqrt{2}}{2}+3\sqrt{2}\xi $ &$\frac{3}{2}+\frac{\sqrt{2}}{2}+3\sqrt{2}\xi $ & & & \\
$7$& $0$& $0$&0.0174 & 0 \\
$8$& $0$& $ 0$& 0 & 0 \\
$9$& $1-\frac{2\sqrt{3}}{3}$&  0& -0.5580& $-\frac{1}{3} - \frac{\pi}{24}(1-2\sqrt{3})$  \\
${10}$& $\frac{3\sqrt{2}}{2}-6\sqrt{2}\xi-\frac{\sqrt{3}}{2}$& 0 & - 0.1267 &  $-\frac{4+\pi({3}\sqrt{2}-12\sqrt{2}\xi-\frac{3\sqrt{3}}{2})}{24}$  \\
${11}$& $0$& $ 0$& 0.03245& 0\\
${12}$& 0& $ 0$& 0.01379 &  0\\
${13}$& $0$&  $0$&  0 &0   \\
${14}$& $0$&  $0$ & 0.004902&0  \\
${15}$& 0&  $0$& 0.007310 &  0 \\
${16}$& 0& $0$& 0.008850&0  \\
${17}$& $-\frac{1}{4}-\frac{\sqrt{2}}{2}+3\sqrt{2}\xi+\frac{\sqrt{3}}{6}$& $ \frac{2\sqrt{2}-\sqrt{3}-1}{4} $& 0.04284&$ \frac{4-\pi(-\frac{1}{4}-\sqrt{2}+6\sqrt{2}\xi+\frac{\sqrt{3}}{2})}{24}$ \\
${18}$& 0&  0&0.00328 & 0  \\
${19}$& 0&  0& 0.04898 & 0  \\
${20}$& $\frac{1}{2}-6\sqrt{2}\xi+\frac{\sqrt{2}}{2}$&  $\frac{\sqrt{3}-\sqrt{2}}{2}$& 0.07429 &$ -\frac{4+\pi(\frac{1}{2}-12\sqrt{2}\xi+\sqrt{2})}{24}$  \\
${21}$& $-\frac{3}{4}+3\sqrt{2}\xi$&  $\frac{3-\sqrt{3}}{4}$& 0.5730 & $ - \frac{4+\pi(-\frac{3}{4}+6\sqrt{2}\xi)}{24}$ \\
\hline 
\end{tabular}
\caption{Values of $Q_i^j$.}
\label{Qer}
\end{table}

\begin{table}[H]\footnotesize
\centering
\begin{tabular}{cccc}
\hline
$\eta_j$ & $\tilde{w}_j^{(2)}$&$\tilde{w}_j^{(1)}$&$\tilde{w}_j^{(0)}$\\
\hline
$\eta_1$&0&0&0\\
$\eta_2$&0.1777&0.4789& 0.1535\\
$\eta_3$&0&0&0\\
$\eta_4$&0&0&0\\
$\eta_5$&0&0&0\\
$\eta_6$&0&0&0\\
$\eta_7$&0&0&0\\
$\eta_8$&0&0&0\\
$\eta_9$&2.2019&-0.3769&-0.3024\\
$\eta_{10}$&0&0&0\\
$\eta_{11}$&4.7430&1.0450&-0.3830\\
$\eta_{12}$&0&0&0\\
$\eta_{13}$&0&0&0\\
$\eta_{14}$&0&0&0\\
$\eta_{15}$&0&0&0\\
$\eta_{16}$&0&0&0\\
$\eta_{17}$&0.5241&0.0111&-1.937\\
$\eta_{18}$&0&0&0\\
$\eta_{19}$&0&0&0\\
$\eta_{20}$&-1.4678&0.5583&0.2587\\
$\eta_{21}$&1.1620&-0.7321&0.0031\\
$\eta_{22}$&0&0&0\\
\hline
\end{tabular}
\caption{Optimal weights.}
\label{optimal}
\end{table}

\clearpage

\section*{Acknowledgements}
The first author has been supported by the German Research Foundation (DFG) via the Research Group FOR 1548 ``Geometry and Physics of Spatial Random Systems''.
The second author was funded by a grant from the Carlsberg Foundation. The authors are most thankful to Markus Kiderlen who contributed many ideas to this paper.

%\bigskip
%
%Authors' addresses: 
%
%\medskip
%\noindent
%Julia H\"{o}rrmann, Faculty of Mathematics, Ruhr-Universit\"{a}t Bochum, 44780 Bochum. e-mail:julia.hoerrmann@ruhr-uni-bochum.de
%
%\medskip
%\noindent
%Anne Marie Svane, Department of Mathematics, Aarhus University, 8000 Aarhus C, Denmark. 
%e-mail: amsvane@math.au.dk


\begin{thebibliography}{99}
%\footnotesize
\bibitem{chiu}  {Chiu, S. N., Stoyan, D., Kendall, W. S. \& Mecke, J.} (2013). \emph{Stochastic Geometry and its Applications}. 3d ed., Wiley, Chichester.

%\bibitem{gardner} Campi, S.; Gardner, R. J.; Gronchi, P.; Kiderlen, M. (2012): Lightness functions. \emph{Adv. Math.} \textbf{231}, no. 6, 3118--3146.

\bibitem{federer} Federer, H. (1959). Curvature measures. \emph{Trans. Amer. Math. Soc.} \textbf{93}, 418--491.
\bibitem{daniel} Hug, D. (1999). \emph{Measures, curvatures and currents in convex geometry.}  Habilitationsschrift, Albert-Ludwigs-Universit\"at Freiburg.
\bibitem{last} Hug, D., Last, G. \& Weil, W.  (2004). A local Steiner-type formula for general closed sets and applications.  \emph{Math. Z.} \textbf{246}, 237--272. 
%\bibitem{bieri}  { Bieri, H. and Nef, W.} (1984). Algorithms for the Euler characteristic and related additive functionals of digital objects. \emph{Computer Vision, Graphics, and Image Processing} \textbf{28}, 166--175.
%\bibitem{dorst}  { Dorst, L. and  Smeulders, A. W. M.}  (1987). Length estimators for digitized curves. \emph{Computer Vision, Graphics, and Image Processing} \textbf{40}, 311--333.
%\bibitem{freeman} { Freeman, H.} (1970). Boundary encoding and processing. In \emph{Picture Processing and Psychopictorics}, ed. B. S. Lipkin  and A. Rosenfeld. Academic Press, New York, 241--266.
%\bibitem{goodey} Goodey, P., Weil, W.  (2003). Translative and kinematic integral formulae for support functions. II. \emph{Geom. Dedicata} \textbf{99}, 103--125. 
\bibitem{julia} H\"{o}rmann, J., Hug, D., Klatt, M. \& Mecke K. (2014). Minkowski tensor density formulas for Boolean models. \emph{Adv. in Appl. Math.} \textbf{55}, 48--85.

%\bibitem{last}  { Hug, D., Last, G. and Weil, W.} (2004). A local Steiner-type formula for general closed sets and applications.  \emph{Math. Z.} \textbf{246}, no. 1-2, 237--272. 
\bibitem{kampf}  { Kampf, J.} (2014). Asymptotic order of the parallel volume difference in Minkowski spaces. \emph{J. Convex Anal.} \textbf{21}, 925--950.

%\bibitem{kampf2} { Kampf, J.} (2012). A limitation of the estimation of intrinsic volumes via pixel configuration counts. WIMA Report (144), TU Kaiserslautern.

\bibitem{kampf2}  Kampf, J. (2014). A limitation of the estimation of intrinsic vo\-lumes via pixel configuration counts. \emph{Mathematika} \textbf{60}, 485--511.

\bibitem{markus} { Kampf, J. \& Kiderlen, M.} (2013). Large parallel volumes of finite and compact sets in $d$-dimensional Euclidean space. \emph{Doc. Math.} \textbf{18}, 275--295. 
%\bibitem{kiderlen2} M. Kiderlen. \emph{Journal of Visual Communication and Image Representation}. \textbf{17} (2006), no. 6, 1237--1255. 
%\bibitem{eva} { Kiderlen, M. and  Jensen, E. B. V.} (2003). Estimation of the directional measure of planar random sets by digitization.  \emph{Adv. Appl. Prob.} \textbf{35}, no. 3, 583--602. 	
\bibitem{kiderlen} Kiderlen, M. (2013). Introduction into integral geometry and stereology. In \emph{Stochastic geometry, spatial statistics and random fields} (ed E. Spodarev), 21--48. Lecture Notes in Math. \textbf{2068}, Springer, Heidelberg.


\bibitem{rataj} { Kiderlen, M. \& Rataj, J.} (2006). On infinitesimal increase of volumes of morphological transforms. \emph{Mathematika} \textbf{53}, 103--127.
\bibitem{spodarev} Klenk, S., Schmidt, V. \& Spodarev, E. (2006). A new algorithmic approach to the computation of Minkowski functionals of polyconvex sets. \emph{Comput. Geom.} \textbf{34}, 127--148. 
\bibitem{digital} { Klette, R. \& Rosenfeld, A.} (2004). \emph{Digital Geometry}. Elsevier, San Francisco. 


\bibitem{lindblad}  Lindblad, J.  (2005).  Surface area estimation of digitized 3D objects using weighted local configurations.
\emph{Image Vis. Comput.} \textbf{23}, 111--122.
\bibitem{miles} Miles R. E. (1976). Estimating aggregate and overall characteristics from thick sections by transmission microscopy. \emph{J. Microsc.} \textbf{107}, 227--233.

\bibitem{molchanov}  { Molchanov, I.} (1997). \emph{Statistics of the Boolean Model for Practicioners and Mathematicians}. Wiley, Chichester.
\bibitem{OM} { Ohser, J. \& M\"{u}cklich, F.} (2000). \emph{Statistical Analysis of Microstructures}. Wiley, Chichester.
%\bibitem{ons} { Ohser, J.,  Nagel, W. and Schladitz, K.} (2003). The Euler number of a discretized set --- surprising results in three dimensions.  \emph{Image Anal. Stereol.} \textbf{22}, no. 1, 11--19.
\bibitem{oh} Ohser, J., Nagel, W. \& Schladitz, K. (2003). The Euler number of discretised sets -- surprising results in three dimensions. \emph{Image Anal. Stereol.} \textbf{22}, 11--19.
\bibitem{nagel} { Ohser, J., Nagel, W. \& Schladitz, K.}  (2009). Miles formulae for Boolean models observed on lattices. \emph{Image Anal. Stereol.} \textbf{28}, 77--92.
\bibitem{schneider}  { Schneider, R.} (1993). \emph{Convex bodies: The Brunn--Minkowski Theory}. Cambridge University Press, Cambridge.


\bibitem{SW}  { Schneider, R. \& Weil, W.}  (2008). \emph{Stochastic and Integral Geometry}. Springer, Heidelberg.
%\bibitem{kothe}  { Stelldinger, P. and K\"{o}the, U.} (2005). Shape preserving digitization of binary images after blurring. \emph{Discrete geometry for computer imagery}. {Lecture Notes in Comput. Sci.} \textbf{3429}, Springer, Berlin, 383--391.


\bibitem{turk} Schr\"{o}der-Turk, G. E., Kapfer, S. C., Breidenbach, B., Beisbart, C. \& Mecke, K. (2008). Tensorial Minkowski functionals and anisotropy measures for planar patterns. \emph{J. Microsc.} {238}, 57--74.
%algoritmer

%\bibitem{mecke} Schr\"{o}der-Turk GE, Mickel W, Kapfer SC, Schaller FM, Breidenbach B, Hug D, Mecke K (2010a). Minkowski tensors of anisotropic spatial structure. arXiv.org: 1009.2340.
\bibitem{mecke} Schr\"{o}der-Turk, G. E., Mickel, W., Kapfer, S. C., Schaller, F. M., Breidenbach, B., Hug, D., Mecke, K. (2013): {Minkowski tensors of anisotropic spatial structure}.
\emph{New J. Phys.} \textbf{15}, (8), 083028.


\bibitem{am} Svane, A. M.  (2014). Local digital estimators of intrinsic volumes for Boolean models and in the design based setting. \emph{Adv. Appl. Prob.} \textbf{46}, 35--58.

\bibitem{am2} Svane, A. M. (2015). Local digital algorithms for estimating the integrated mean curvature of $r$-regular sets. \emph{Discrete Comput. Geom.} \textbf{54}, 316--333. 

\bibitem{am3} Svane, A. M. (2014). On multigrid convergence of local algorithms for intrinsic volumes. \emph{J. Math. Imaging Vis.}  \textbf{49}, 352--376. 

\bibitem{johanna} Ziegel, J., Kiderlen, M. (2010). Estimation of surface area and surface area measure of three-dimensional sets from digitizations. \emph{Image Vision Comput.}~\textbf{28}, 64--77. 

\end{thebibliography}
\end{document}